\renewcommand{\baselinestretch}{1.15}
\def\NAT@spacechar{~}
\crefname{lem}{Lemma}{Lemmas}
\crefname{thm}{Theorem}{Theorems}
\crefname{cor}{Corollary}{Corollaries}
\crefname{prop}{Proposition}{Propositions}
\crefname{conj}{Conjecture}{Conjectures}
\crefname{open}{Open Problem}{Open Problems}
\crefname{obs}{Observation}{Observations}
\theoremstyle{plain}
\newtheorem{thm}{Theorem}
\newtheorem{lem}[thm]{Lemma}
\newtheorem{cor}[thm]{Corollary}
\newtheorem{obs}[thm]{Observation}
\newtheorem{prop}[thm]{Proposition}
\theoremstyle{definition}
\DeclarePairedDelimiter{\ceil}{\lceil}{\rceil}
\DeclarePairedDelimiter{\floor}{\lfloor}{\rfloor}
\DeclareMathOperator{\tw}{tw}
\DeclareMathOperator*{\pw}{pw}
\DeclareMathOperator{\degen}{degen}
\renewcommand{\leq}{\leqslant}
\renewcommand{\geq}{\geqslant}
\newcommand{\CartProd}{\mathbin{\square}}
\theoremstyle{definition}
\DeclareMathOperator*{\dist}{dist}
\DeclareMathOperator*{\bs}{\backslash}
\DeclareMathOperator*{\sse}{\subseteq}
\DeclareMathOperator*{\R}{\mathbb{R}}
\DeclareMathOperator*{\N}{\mathbb{N}}
\DeclareMathOperator*{\G}{\mathcal{G}}
\DeclareMathOperator*{\Path}{\mathcal{P}}
\DeclareMathOperator*{\Pcal}{\mathcal{P}}
\DeclareMathOperator*{\B}{\mathcal{B}}
\DeclareMathOperator*{\W}{\mathcal{W}}
\DeclareMathOperator*{\pathnumber}{path}
\DeclareMathOperator*{\dll}{dll}
\title{\bf Structural Properties of Graph Products}
 \author{%
 	Robert Hickingbotham,\!\!%
 	\thanks{School of Mathematics, Monash University, Melbourne, Australia (\texttt{robert.hickingbotham@monash.edu}). Research supported by an Australian Government Research Training Program Scholarship.}
 	\,\,
 	David R. Wood\thanks{School of Mathematics, Monash University, Melbourne, Australia (\texttt{david.wood@monash.edu}). Research supported by the Australian Research Council.}
 }
\begin{document}
\maketitle

\begin{abstract}
	 Dujmovi\'{c}, Joret, Micek, Morin, Ueckerdt, and Wood [J. ACM 2020] established that every planar graph is a subgraph of the strong product of a graph with bounded treewidth and a path. Motivated by this result, this paper systematically studies various structural properties of cartesian, direct and strong products. In particular, we characterise when these graph products contain a given complete multipartite subgraph, determine tight bounds for their degeneracy, establish new lower bounds for the treewidth of cartesian and strong products, and characterise when they have bounded treewidth and when they have bounded pathwidth.
\end{abstract}

\section{Introduction}
\citet{DJMMUW20} recently showed that every planar graph is a subgraph of the strong product of a graph with bounded treewidth and a path.
\begin{thm}[\cite{DJMMUW20}]\label{GPST}
	Every planar graph is isomorphic to a subgraph of:
	\begin{compactitem}
		\item $H \boxtimes P$ for some graph $H$ of treewidth at most $8$ and for some path $P$;
		\item $H \boxtimes P \boxtimes K_3$ for some graph $H$ of treewidth at most $3$ and for some path $P$.
	\end{compactitem}
\end{thm}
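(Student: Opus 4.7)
The plan is to establish the first statement by constructing an explicit embedding of a planar triangulation into the strong product $H \boxtimes P$; the second statement will follow by refining the same construction.

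First I would reduce to the case that $G$ is a plane triangulation: any planar $G$ is a spanning subgraph of some plane triangulation $G'$, so it suffices to produce the embedding for $G'$. Fix a vertex $r$ on the outer face of $G'$ and let $V_i := \{v : \dist_{G'}(r,v)=i\}$ be the BFS layers, with $L$ the eccentricity of $r$. The path factor will be $P = p_0 p_1 \dots p_L$, and each vertex $v \in V_k$ will be mapped to a pair $(\beta(v), p_k)$ where $\beta(v)$ lives in a carefully chosen graph $H$.

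Next I would choose a BFS spanning tree $T$ rooted at $r$ and partition $V(G')$ into \emph{vertical paths}: subpaths of $T$ consisting of consecutive ancestors (hence monotone in BFS level). For any such partition $\mathcal{B} = \{B_1,\dots,B_m\}$ define the quotient $H = G'/\mathcal{B}$, i.e.\ $V(H)=\mathcal{B}$ with $B_iB_j \in E(H)$ iff $G'$ has an edge between $B_i$ and $B_j$. Then the map $v \mapsto (B(v), p_{\dist(r,v)})$ is a subgraph embedding $G' \hookrightarrow H \boxtimes P$: every edge of $G'$ joins vertices in consecutive or identical BFS layers, and such edges are realised by an $H$-step, a $P$-step, or a diagonal step in the strong product. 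So once $H$ is found with $\tw(H)\le 8$, the theorem follows.

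The hard part, and the step I expect to be the main obstacle, is choosing $\mathcal B$ so that $\tw(H) \le 8$. I would prove this by induction on $|V(G')|$, exploiting planarity through the following geometric observation: in a plane triangulation with a BFS tree $T$ from $r$, any nonseparating face plus the BFS paths from its three vertices to $r$ forms a theta-like subgraph whose ``meridian'' is a cycle covered by at most three vertical $T$-paths. Choosing such a separating cycle $C$ whose interior is balanced (via the planar separator tradition), I would recurse on each side of $C$ and glue the resulting tree decompositions along a bag containing the $\le 3$ vertical paths of $C$. Tracking which ancestral vertical paths of $C$ remain ``active'' on the recursion path up to the root of the tree decomposition, one checks that each bag contains at most nine vertical paths, giving $\tw(H) \le 8$. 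This bookkeeping is the delicate and calculation-heavy step.

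For the second statement I would refine the construction by grouping vertical paths into triples (one per ``colour'' of the $K_3$ factor) so that the residual quotient graph---now accounting only for inter-triple adjacencies---admits a tree decomposition of width $3$. The planar separator argument of the previous paragraph already produces separating cycles covered by \emph{three} vertical paths, which matches the $K_3$ factor perfectly and is what drives the width down from $8$ to $3$; the rest of the proof carries over verbatim.
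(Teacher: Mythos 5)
This theorem is quoted by the paper from \cite{DJMMUW20} without proof, so there is no in-paper argument to compare against; I can only assess your sketch against the known proof. Your overall architecture is the right one: reduce to a plane triangulation, use BFS layers as the $P$-coordinate, partition $V(G')$ into vertical paths of a BFS tree, and embed $G'$ into $(G'/\mathcal{B}) \boxtimes P$. (One point worth making explicit: the map $v \mapsto (B(v), p_{\dist(r,v)})$ is injective precisely because each part meets each BFS layer at most once, which is why the parts must be \emph{vertical} paths.)

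The gap is in the step you yourself flag as the obstacle. A balanced-separator recursion in which you ``track which ancestral vertical paths of $C$ remain active on the recursion path up to the root'' does not obviously give bounded bags: without a quantitative invariant on how many vertical paths bound each region, the number of active paths accumulated along a root-to-leaf branch can grow with the recursion depth. The actual proof replaces balance by a structural invariant: each region in the recursion is a near-triangulation whose outer boundary is the union of at most \emph{six} vertical paths, and a Sperner-type colouring argument (colour each interior vertex by which of three boundary groups its vertical path descends into) produces a \emph{tripod} --- a triangular face together with the three vertical paths from its corners down to the boundary --- that splits the region into at most three subregions, each again bounded by at most six vertical paths. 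The bag at a node is its at most six boundary paths plus the at most three tripod legs, i.e.\ at most nine paths, whence treewidth at most $8$; balance plays no role. For the second bullet, ``carries over verbatim'' is also too quick: there one keeps the tripods themselves as the parts of the partition (so each part meets each layer in at most three vertices, which is exactly what the $K_3$ factor absorbs), and runs a different induction in which each region is bounded by at most three tripods and each bag contains at most four tripods, giving treewidth at most $3$. So your sketch has the correct skeleton but is missing the two invariants (at most six boundary paths for the first bullet; at most three boundary tripods for the second) that actually make the bag sizes bounded.
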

This breakthrough has been the key tool to resolve several major open problems regarding queue layouts \cite{DJMMUW20}, nonrepetitive colourings \cite{DEJWW2020nonrepetitive}, centred colourings \cite{debski2020improved}, and adjacency labelling schemes \cite{bonamy2020shorter,EJM2020universal,DEGJMM2020adjacency}. Treewidth measures how similar a graph is to a tree and is an important parameter in algorithmic and structural graph theory (see \cite{bodlaender1993tourist,HW2017tied,reed1997treewidth}). Graphs with bounded treewidth are a simpler class of graphs compared to planar graphs. \Cref{GPST} therefore reduces problems on a complicated class of graphs (planar graphs) to a simpler class of graphs (bounded treewidth). \Citet{UWY2021GPST} improved \Cref{GPST} by replacing ``treewidth at most 8'' with ``simple treewidth at most 6''. Analogous \emph{graph product structure theorems} hold for other graph classes such as graphs with bounded genus \cite{DJMMUW20}, minor-closed graph classes \cite{DJMMUW20}, map graphs \cite{DMW20}, $k$-planar graphs \cite{DMW20}, and infinite graphs \cite{HMSTW2021universality}. 

These results motivate further study into the structural properties of graph products. In this paper, we explore the following properties of cartesian, direct and strong products: complete multipartite subgraphs, degeneracy, pathwidth and treewidth. Our key contributions are the following:

\textbf{Complete multipartite subgraphs:} We characterise the presence of complete multipartite subgraphs in cartesian, direct and strong products. These results are presented in \Cref{SectionMulti}.

\textbf{Degeneracy:} A graph $G$ is \emph{$d$-degenerate} if every subgraph of $G$ has minimum degree at most $d$. The \emph{degeneracy} of $G$, $\degen(G)$, is the minimum integer $d$ such $G$ is $d$-degenerate. Degeneracy is an important graph parameter as it measures the sparsity of a graph. Moreover, $d$-degenerate graphs are $(d+1)$-colourable and in fact, $(d+1)$-choosable. We present tight upper and lower bounds on the degeneracy of direct and strong products. See \Cref{SectionDegen} for these results.

\textbf{Pathwidth and treewidth of cartesian and strong products:} We establish two new lower bounds for the treewidth of cartesian and strong products. The first bound states that if one graph does not admit small $\epsilon$-separations and the other graph is connected with many vertices, then the cartesian product has large treewidth. The second bound states that if one graph has large treewidth and the other graph has large Hadwiger number, then the strong product has large treewidth. In addition, we characterise when the cartesian and strong product of two monotone graph classes has bounded treewidth and when it has bounded pathwidth. These results are presented in \Cref{SectiontwCart}.
 
\textbf{Pathwidth and treewidth of direct products:} We characterise when the direct product of two monotone graph classes has bounded treewidth and when it has bounded pathwidth. For treewidth, the characterisation states that the direct product of two graph classes has bounded treewidth if and only if the connected graphs in one of the classes have a bounded number of vertices while the graphs in the other class have bounded treewidth; or if the connected graphs in one of the classes have bounded vertex cover number while the graphs in the other class have bounded treewidth and bounded maximum degree. For pathwidth, our characterisation is directly analogous to that for treewidth with the stronger condition that the second class has bounded pathwidth. We also demonstrate that the treewidth of a graph is polynomially tied to the treewidth of the direct product of the graph with $K_2$. To our knowledge, it was previously open whether these two parameters were tied. These results are presented in \Cref{SectionDirectK2,SectionDirectProduct}. 
 
This line of research has been previously explored for the following properties of graph products: connectivity \cite{BS2007connectivity, BS2008connectivity, XY2007connectivity, Spacapan2008connectivity}; queue-number \cite{wood2005queue}; stack-number \cite{dujmovic2020stack,pupyrev2020book}; thinness \cite{bonomo2020thinness}; boxicity and cubicity \cite{CIMR2015boxicity}; polynomial growth \cite{dvovrak2020notes}; bounded expansion and colouring numbers \cite{dvovrak2020notes}; chromatic number \cite{sabidussi1964vertex, DSW1984chromatic, klavzar1996coloring, Shitov2019counterexamples, tardif2008hedetniemi, zhu2021hedetniemi,Vesztergombi78,Vesztergombi81}; and Hadwiger number \cite{ABPS1997product, CS2007conjecture, ivanco1988results, kotlov2001strong, miller1978contractions,zelinka1976finite, wood2011clique, pecaninovic2019complete, HW2021hadwiger}. Our companion paper \cite{HW2021hadwiger} studies the Hadwiger number of direct products. See the handbook by \citet{hammack2011handbook} for an in-depth treatment of graph products.

\subsection{Preliminaries}
For every integer $k \geq 1$, let $[k]:=\{1,\dots,k\}$. A \emph{graph parameter} is a function $\beta$ such that $\beta(G)\in\mathbb{R}$ for every graph $G$ and $\beta(G_1)=\beta(G_2)$ for all isomorphic graphs $G_1$ and $G_2$. Two graph parameters $\alpha$ and $\beta$ are \emph{tied }if there exists a function $f$ such that $\alpha(G)\leq f(\beta(G))$ and $\beta(G)\leq f(\alpha(G))$. If the function $f$ is polynomial, then they are \emph{polynomially tied}. 

Let $H$ and $G$ be graphs. $H$ is a \emph{minor} of $G$ if a graph isomorphic to $H$ can be obtained from $G$ by vertex deletion, edge deletion, and edge contraction. A \emph{model} of $H$ in $G$ is a function $\mu$ from $V(H)$ to disjoint connected subgraphs in $G$ such that $\mu(u)$ and $\mu(v)$ are adjacent whenever $uv \in E(H)$. It is folklore that $H$ is a minor of $G$ if and only if $G$ contains a model of $H$. Let $\textsf{v}(G):=|V(G)|$ be the \emph{order of $G$}. Let $\tilde{\textsf{v}}(G)$ be the maximum order of a connected component of $G$. For $v \in V(G)$, let $N_G(v):=\{u\in V(G):uv \in E(G)\}$ and $N_G[v]:=N(v)\cup \{v\}$. When the graph $G$ is clear, we drop the subscript $G$ and use the notation $N(v)$ and $N[v]$. Let $\Delta(G)$ and $\delta(G)$ respectively denote the maximum and minimum degree of $G$.

A \emph{class} of graphs $\G$ is a set of graphs that is closed under isomorphism and contains a graph with a non-empty vertex set. $\G$ is \emph{hereditary} if it is closed under induced subgraphs, \emph{monotone} if it is closed under subgraphs, and \emph{minor-closed} if it is closed under minors. For a graph parameter $\beta$, let $\beta(\G):=\sup\{\beta(G):G \in \G\}$. We say that $\beta(\G)$ is \emph{bounded} if there exists $c \in \R$ such that $\beta(G)\leq c$ for every graph $G \in \G$, otherwise we say that $\beta(\G)$ is \emph{unbounded}.

Let $G_1$ and $G_2$ be graphs. A \emph{graph product }$G_1 \bullet G_2$ is defined with vertex set: 
\begin{equation*}
	V(G_1 \bullet G_2):= \{(a,v): a \in V(G_1), v \in V(G_2)\}.
\end{equation*}

The \emph{cartesian product} $G_1 \CartProd G_2$ consists of edges of the form $(a,v)(b,u)$ where either $ab \in E(G_1)$ and $v=u$, or $uv \in E(G_2)$ and $a=b$. The \emph{direct product} $G_1 \times G_2$ consists of edges of the form $(a,v)(b,u)$ where $ab \in E(G_1)$ and $uv \in E(G_2)$. This product is also known as the \emph{tensor product}, the \emph{Kronecker product} and the \emph{cross product}. The \emph{strong product} $G_1 \boxtimes G_2$ is defined as $(G_1 \CartProd G_2) \cup (G_1 \times G_2)$. For a graph product $\bullet\in \{\CartProd, \times, \boxtimes\}$, graph classes $\G_1$ and $\G_2$, and graph $H$, let $\G_1 \bullet \G_2$ be the graph class $\{G_1 \bullet G_2: G_1 \in \G_1, G_2 \in \G_2\}$ and let $\G_1 \bullet H$ be the graph class $\{G_1 \bullet H: G_1 \in \G_1\}$. 

The following well-known properties of graph products are a straightforward consequence of their definition:
\begin{compactitem}
	\item for all $\bullet\in \{\CartProd, \times, \boxtimes\}$ and graphs $G_1$ and $G_2$ with subgraphs $H_1 \sse G_1$ and $H_2 \sse G_2$, we have $H_1 \bullet H_2 \sse G_1 \bullet G_2$;
	\item for all hereditary graph classes $\G_1$ and $\G_2$, we have $\G_1 \sse \G_1 \CartProd \G_2$ and $\G_2 \sse \G_1 \CartProd \G_2$;
	\item for all graphs $G_1$ and $G_2$ and vertices $v_i \in V(G_i)$ where $\deg_{G_i}(v_i)=d_i$ for $i \in \{1,2\}$:
	\begin{compactitem}
		\item $\deg_{G_1 \CartProd G_2}((v_1,v_2))=d_1+d_2$;
		\item $\deg_{G_1 \times G_2}((v_1,v_2))=d_1d_2$; and
		\item $\deg_{G_1 \boxtimes G_2}((v_1,v_2))=d_1+d_2+d_1d_2.$
	\end{compactitem}
\end{compactitem}

For a subgraph $Z \sse G_1 \bullet G_2$ of a graph product where $\bullet\in \{\CartProd, \times, \boxtimes\}$, the \emph{projection of $Z$ onto $G_1$} is the subgraph of $G_1$ induced by the set of vertices $u_1\in V(G_1)$ such that $(u_1,u_2)\in V(Z)$ for some $u_2\in V(G_2)$, and the \emph{projection of $Z$ onto $G_2$} is the subgraph of $G_2$ induced by the set of vertices $v_2\in V(G_2)$ such that $(v_1,v_2)\in V(Z)$ for some $v_1\in V(G_1)$.

\section{Complete Multipartite Subgraphs}\label{SectionMulti}	
For integers $d \geq 2$ and $n_1,\dots,n_d\geq 1$, a \emph{complete $d$-partite graph} $K_{n_1,\dots,n_d}$ has a partition of its vertex set, $(A_1,\dots,A_d)$, such that for all distinct $i,j \in [d]$ and $a_i \in A_i$ and $a_j \in A_j$, we have $a_ia_j \in E(K_{n_1,\dots,n_d})$ and $|A_i|=n_i$. When $d=2$, it is a \emph{complete bipartite graph}. Observe that the graph $K_{1,\dots,1}$ is a complete graph. In this section we characterise when a cartesian, direct and a strong product contains a given complete multipartite subgraph.

\subsection{Cartesian Product}
We begin with cartesian products. Let $G_1$ and $G_2$ be graphs. We say that $(u_1,v_1),\dots,(u_d,v_d) \in V(G_1 \CartProd G_2)$ are \emph{aligned} if $u_1=u_2=\ldots=u_d$ or $v_1=v_2=\ldots= v_d$. Observe that for a subgraph $H \sse G_1 \CartProd G_2$, if the vertex set $V(H)$ is aligned, then $H$ is isomorphic to a subgraph of $G_1$ or $G_2$. For a set $X \sse V(G_1 \CartProd G_2)$, if every pair of vertices in $X$ is aligned, then $X$ is aligned. Furthermore, if $x$ and $y$ are adjacent vertices in $G_1 \CartProd G_2$, then $x,y$ are aligned. Hence, we have the following.

\begin{lem}\label{CartProdNeighbourIntersection3}
	For all graphs $G_1$ and $G_2$, every clique in $G_1 \CartProd G_2$ is aligned.
\end{lem}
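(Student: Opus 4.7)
The plan is to reduce the claim to a simple three-vertex case analysis, using the observation already established in the preceding paragraph: any single edge of $G_1 \CartProd G_2$ joins two vertices sharing either their first coordinate or their second coordinate.

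First I would dispose of the trivial cases. A clique on at most two vertices is aligned by the edge-alignment fact for cartesian-product edges (and by vacuous agreement for singletons). So assume we have a clique $K$ of size at least three, and suppose for contradiction that $K$ is not aligned. Then there exist vertices in $K$ whose first coordinates are not all equal, and vertices in $K$ whose second coordinates are not all equal.

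The key step is to extract three vertices that witness this failure. I would pick $(u_1,v_1),(u_2,v_2)\in K$ whose first coordinates differ; since these two are adjacent, the edge-alignment fact forces $v_1=v_2$. Similarly, by taking a pair in $K$ whose second coordinates differ, one gets vertices sharing their first coordinate. Combining these, one can pick a third vertex $(u_3,v_3)\in K$ with $v_3\neq v_1=v_2$; the alignment condition for the pair $(u_1,v_1),(u_3,v_3)$ then forces $u_1=u_3$, and similarly considering $(u_2,v_2),(u_3,v_3)$ forces $u_2=u_3$. But then $u_1=u_3=u_2$, contradicting $u_1\neq u_2$.

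I do not expect a real obstacle here: the content is entirely combinatorial bookkeeping on coordinates, and the paper's own preceding remarks do most of the work. The only subtlety worth stating cleanly is the definitional point that ``$X$ aligned'' is a global condition (all vertices share one coordinate), not merely a pairwise one, which is precisely why a three-vertex analysis is needed rather than just the edge-level observation.
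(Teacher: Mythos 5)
Your proof is correct and follows essentially the same route as the paper, which derives the lemma from the two observations in the preceding paragraph (adjacent vertices are pairwise aligned, and pairwise-aligned sets are globally aligned). The only difference is that you explicitly supply the three-vertex argument for the pairwise-to-global step, which the paper merely asserts; that is a legitimate and correctly executed filling-in of detail.
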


The next two lemmas are used in our characterisation of complete multipartite subgraphs in a cartesian product.

\begin{lem}\label{CartProdNeighbourIntersection1}
	For all graphs $G_1$ and $G_2$, if $X\sse V(G_1 \CartProd G_2)$ and $|\bigcap_{x \in X} N(x)| > 2$, then $X$ is aligned.
\end{lem}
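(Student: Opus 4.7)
I would prove the contrapositive: if $X$ is not aligned, then the common neighborhood $\bigcap_{x\in X}N(x)$ has size at most $2$. Since $X$ is not aligned, it contains two vertices $x_1=(a_1,v_1)$ and $x_2=(a_2,v_2)$ with $a_1\neq a_2$ and $v_1\neq v_2$, so it suffices to show that any two such unaligned vertices already have at most two common neighbours in $G_1\CartProd G_2$.

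The key observation is the explicit description of neighbourhoods in a cartesian product: for any $(a,v)\in V(G_1\CartProd G_2)$,
\[
N((a,v))=\bigl(N_{G_1}(a)\times\{v\}\bigr)\cup\bigl(\{a\}\times N_{G_2}(v)\bigr).
\]
I would then intersect these two ``cross-shaped'' neighbourhoods by case analysis on which of the two sides each common neighbour $(b,u)$ lies in, for each of $x_1$ and $x_2$. Two of the four cases are immediately impossible because they would force $v_1=v_2$ or $a_1=a_2$. The remaining two cases pin down the only possible common neighbours as $(a_2,v_1)$ and $(a_1,v_2)$, and these exist only when $a_1a_2\in E(G_1)$ and $v_1v_2\in E(G_2)$.

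Thus $|N(x_1)\cap N(x_2)|\le 2$, which forces $\bigl|\bigcap_{x\in X}N(x)\bigr|\le 2$, contradicting the hypothesis. Hence $X$ must be aligned.

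\textbf{Main obstacle.} There is no real obstacle here; the result is essentially a one-line calculation once the cross-shaped form of $N((a,v))$ is written down, and the only thing to be careful about is not to assume $a_1a_2\in E(G_1)$ or $v_1v_2\in E(G_2)$ when bounding $|N(x_1)\cap N(x_2)|$ (the bound $\le 2$ holds regardless, and is strict unless both edges are present).
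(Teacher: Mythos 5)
Your proposal is correct and follows essentially the same route as the paper: both reduce to showing that two unaligned vertices $(a_1,v_1),(a_2,v_2)$ have common neighbourhood contained in $\{(a_1,v_2),(a_2,v_1)\}$, using the cross-shaped description of neighbourhoods in the cartesian product (the paper phrases it as a proof by contradiction rather than contrapositive, but the computation is identical). Both arguments also implicitly use the fact, noted in the paper just before the lemma, that pairwise alignment of a set implies alignment of the whole set.
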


\begin{proof}
	Let $X:=\{(u_1,v_1),\dots,(u_k,v_k)\}$ for some integer $k\geq 1$. If $k=1$ then the claim holds trivially. So assume that $k\geq 2$. For the sake of contradiction, suppose there exists distinct $i,j \in [k]$ such that $u_i\neq u_j$ and $v_i\neq v_j$. Then 
	\begin{equation*}
		\begin{split}	
			N((u_i,v_i)) \cap N((u_j,v_j))=&(\{(u_i,v): vv_i \in E(G_2)\} \cup \{(u,v_i): uu_i \in E(G_1)\}) \\
			&\cap (\{(u_j,v): vv_j \in E(G_2)\} \cup \{(u,v_j): uu_j \in E(G_1)\}) \\ 
			\sse& \{(u_i,v_j),(u_j,v_i)\}.
		\end{split}
	\end{equation*}
	But this contradicts the assumption that the intersection of the neighbourhoods of $(u_i,v_i)$ and $(u_j,v_j)$ has size greater than $2$. Thus, every pair of vertices in $X$ is aligned and hence, $X$ is aligned.
\end{proof}

\begin{lem}\label{CartProdNeighbourIntersection2}
	If $(u_1,v_1),(u_2,v_2)$ are distinct vertices that are aligned in $V(G_1 \CartProd G_2)$, then
	$N[(u_1,v_2)] \cap N[(u_1,v_2)]$ is aligned.
\end{lem}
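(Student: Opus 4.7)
The plan is a direct case analysis on the type of alignment. Since $(u_1,v_1)$ and $(u_2,v_2)$ are distinct and aligned, exactly one of the following holds: either $u_1=u_2$ with $v_1\neq v_2$, or $v_1=v_2$ with $u_1\neq u_2$. The two situations are symmetric under swapping the roles of $G_1$ and $G_2$, so it suffices to handle the first.

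Setting $u_1=u_2=u$, I would write the two closed neighbourhoods explicitly from the definition of the cartesian product:
\begin{align*}
N[(u,v_1)] &= \{(u,v_1)\} \cup \{(u,v) : vv_1\in E(G_2)\} \cup \{(u',v_1) : u'u\in E(G_1)\}, \\
N[(u,v_2)] &= \{(u,v_2)\} \cup \{(u,v) : vv_2\in E(G_2)\} \cup \{(u',v_2) : u'u\in E(G_1)\}.
\end{align*}
I would then show that the intersection is contained in the ``column'' $\{(u,v) : v\in V(G_2)\}$, which is clearly aligned. Indeed, any vertex of $N[(u,v_1)]$ with first coordinate $u'\neq u$ must have the form $(u',v_1)$ arising from the third piece of the neighbourhood; for such a vertex to also lie in $N[(u,v_2)]$, its second coordinate would need to be $v_2$, contradicting $v_1\neq v_2$. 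Hence every vertex in $N[(u,v_1)]\cap N[(u,v_2)]$ has first coordinate $u$, and the intersection is aligned.

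The argument is elementary and presents no substantial obstacle. The only subtlety is carefully enumerating the three pieces of each closed neighbourhood and invoking the hypothesis $v_1\neq v_2$ at the right moment to rule out the potential cross terms; the symmetric case where $v_1=v_2$ is handled identically by interchanging the two factors.
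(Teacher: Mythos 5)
Your proof is correct and follows essentially the same route as the paper: both write out the neighbourhoods explicitly from the definition of the cartesian product and observe that the intersection is confined to a single row or column (you handle the case $u_1=u_2$, the paper the symmetric case $v_1=v_2$). Your treatment is if anything slightly more careful, since you explicitly account for the closed neighbourhoods rather than only the open ones.
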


\begin{proof}
	Without loss of generality, $v_1=v_2=v^\star$. Then
	\begin{equation*}
		\begin{split}	
			N((u_1,v^\star)) \cap N((u_2,v^\star))=& (\{(u_1,v): vv^\star \in E(G_2)\} \cup \{(u,v^\star): uu_1 \in E(G_1)\}) \\
			&\cap (\{(u_2,v): vv^\star \in E(G_2)\} \cup \{(u,v^\star): uu_2 \in E(G_1)\}) \\
			\sse& \{(x,v^\star): x \in V(G_1)\}.
		\end{split}
	\end{equation*}
	Hence $N[(u_1,v^\star)] \cap N[(u_2v^\star)]$ is aligned.
\end{proof}

We now characterise when a cartesian product contains a given complete multipartite subgraph. 

\begin{thm}\label{CompleteMultipartiteStrongProduct}
	For all integers $d \geq 2$ and $n_1,\dots,n_d \geq 1$, and for all graphs $G_1$ and $G_2$ with non-empty vertex sets and maximum degree $\Delta_1$ and $\Delta_2$ respectively, $K_{n_1,\dots,n_d}\subseteq G_1\CartProd G_2$ if and only if at least one of the following conditions hold:
	\begin{compactitem}
		\item $K_{n_1,\dots,n_d}$ is a subgraph of $G_1$ or $G_2$;
		\item $d=2$ and $(n_1,n_2)=(2,2)$ and $K_2 \sse G_1$ and $K_2 \sse G_2$; or
		\item $d=2$ and $(n_1,n_2)=(1,s)$ for some integer $s \geq 1$ and $\Delta_1 +\Delta_2 \geq s$.
	\end{compactitem}
\end{thm}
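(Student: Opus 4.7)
Sufficiency is immediate from the three conditions: under the first, any ``column'' $\{u\}\times V(G_2)$ of the product is isomorphic to $G_2$ (and symmetrically for rows), so any copy of $K_{n_1,\ldots,n_d}$ in $G_1$ or $G_2$ embeds into $G_1\CartProd G_2$; under the second, $K_2\CartProd K_2$ is itself a $4$-cycle, i.e.\ $K_{2,2}$; under the third, the product has maximum degree $\Delta_1+\Delta_2\ge s$, so any vertex of that degree together with $s$ of its neighbours spans a $K_{1,s}$.

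For necessity, suppose $K:=K_{n_1,\ldots,n_d}\sse G_1\CartProd G_2$ with parts $A_1,\ldots,A_d$. The overall strategy is to use the three alignment lemmas to force $V(K)$ into a single row or column of the product (yielding Condition~1), and to isolate the low-parameter cases where these lemmas are too weak. I would first handle $d\ge 3$: pick $x_i\in A_i$ for $i=1,2,3$; since $\{x_1,x_2,x_3\}$ is a triangle, \Cref{CartProdNeighbourIntersection3} makes it aligned, say with common first coordinate $u^\star$, and distinctness of $x_1,x_2$ forces their second coordinates to differ. Then for any further $x\in V(K)$ I can pick two of $x_1,x_2,x_3$ that, together with $x$, form a triangle in $K$; that triangle is again aligned and, since the chosen pair shares first coordinate $u^\star$ but not second coordinate, $x$ must also have first coordinate $u^\star$. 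Hence $V(K)\sse\{u^\star\}\times V(G_2)$ and $K\sse G_2$.

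For $d=2$, assume $n_1\le n_2$. If $n_1=1$, the star centre has $n_2$ neighbours in the product, so $n_2\le\Delta_1+\Delta_2$, giving Condition~3. If $n_1\ge 2$ and $n_2\ge 3$, then $|\bigcap_{x\in A_1}N(x)|\ge n_2>2$, so \Cref{CartProdNeighbourIntersection1} aligns $A_1$; WLOG $A_1$ shares first coordinate $u^\star$, and picking distinct $x_1,x_2\in A_1$ (necessarily with different second coordinates) a short coordinate chase shows any $y\in A_2$, being adjacent to both, must also have first coordinate $u^\star$. (Equivalently, $A_2\sse N[x_1]\cap N[x_2]$, which the proof of \Cref{CartProdNeighbourIntersection2} pins inside $\{u^\star\}\times V(G_2)$.) So $V(K)$ sits in one column and Condition~1 holds.

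The hard part is the remaining $(n_1,n_2)=(2,2)$ case, because then the common neighbourhood of each part has size exactly $2$ and \Cref{CartProdNeighbourIntersection1} just fails to apply. My plan here is a direct edge-by-edge analysis of the $4$-cycle underlying $K_{2,2}$: label each of its four edges as a $G_1$-edge (equal second coordinates, adjacent first coordinates) or a $G_2$-edge (equal first coordinates, adjacent second coordinates). A brief coordinate chase, using that $G_1$ and $G_2$ are loopless, rules out an odd number of edges of either type around the cycle. Hence either all four edges are of the same type, placing $K_{2,2}$ in a single row or column (Condition~1), or the types alternate and the cycle is a rectangle $\{(a,b),(a',b),(a,b'),(a',b')\}$ with $aa'\in E(G_1)$ and $bb'\in E(G_2)$, giving $K_2\sse G_1$ and $K_2\sse G_2$ as in Condition~2. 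This last coordinate chase is the main technical obstacle, since it is precisely the case the general alignment machinery misses.
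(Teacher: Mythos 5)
Your proposal is correct, and for most of the argument it follows the paper's route: sufficiency is handled the same way, the $d\ge 3$ and the $d=2$ with $n_1\ge 2$, $n_2\ge 3$ cases both reduce to alignment via \Cref{CartProdNeighbourIntersection3,CartProdNeighbourIntersection1,CartProdNeighbourIntersection2} (you invoke triangle-alignment where the paper invokes \Cref{CartProdNeighbourIntersection2} for the "spread to the whole vertex set'' step, but this is cosmetic), and $K_{1,s}$ is dispatched by the degree formula in both. The genuine divergence is the $(2,2)$ case. You classify the four edges of the $4$-cycle as $G_1$-edges or $G_2$-edges and run a parity/coincidence argument: an odd count of either type contradicts the cycle closing up in the unchanged coordinate, and two adjacent edges of the same type force two vertices of the $C_4$ to coincide, leaving only the monochromatic case (Condition~1) and the alternating rectangle (Condition~2). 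The paper instead observes that if $K_2\not\sse G_1$ then $E(G_1)=\emptyset$, so every component of $G_1\CartProd G_2$ is isomorphic to a component of $G_2$, and connectivity of $K_{2,2}$ immediately places it inside $G_2$; by symmetry either Condition~2 holds or Condition~1 does. The paper's argument is shorter and avoids any case analysis on the cycle; yours is more self-contained and makes explicit exactly which configurations of a $4$-cycle can occur in a cartesian product, which is arguably more informative. If you write yours up, do spell out the step ruling out two \emph{adjacent} same-type edges (it is where the vertex-coincidence contradiction lives), since "the types alternate'' does not follow from the parity count alone.
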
	

\begin{proof}
	Clearly if $K_{n_1,\dots,n_d}$ is a subgraph of either $G_1$ or $G_2$, then it is a subgraph of $G_1 \CartProd G_2$. 
	
	Observe that $K_{1,s}$ is a subgraph of a graph $G$ if and only if the maximum degree of $G$ is at least $s$. As such, $K_{1,s}$ is a subgraph of $G_1 \CartProd G_2$ if and only if $\Delta_1+\Delta_2\geq s$. Now consider the bipartite graph $K_{2,2}$. Suppose $xy \in E(G_1)$ and $uv \in E(G_2)$. Let $A=\{(x,u),(y,v)\}$ and $B=\{(y,u),(x,v)\}$. Then $(A,B)$ defines a $K_{2,2}$ subgraph in $G_1 \CartProd G_2$. Now suppose that $K_{2,2} \sse G_1 \CartProd G_2$ and $K_2$ is not a subgraph of $G_1$. Then $E(G_1)=\emptyset$ and hence $V(G_1)$ is an independent set. As such, the connected components of $G_1 \CartProd G_2$ are isomorphic to the connected components of $G_2$. Since $K_{2,2}$ is connected, it is a subgraph of $G_2$, as required. 
	
	It remains to consider the cases when $(n_1,\dots,n_d)$ is not equal to $(1,s)$ or $(2,2)$. First, suppose $d \geq 3$ and that $K_{n_1,\dots,n_d} \sse G_1 \CartProd G_2$ with partition $(A_1,\dots,A_d)$. For every $i \in [d]$, let $(x_i,y_i) \in A_i$. Then $X:=\{(x_1,y_1),\dots,(x_d,y_d)\}$ induces a $K_d$ subgraph in $G_1 \CartProd G_2$. By \Cref{CartProdNeighbourIntersection3}, $X$ is aligned. Furthermore, for every $(u,v) \in V(K_{n_1,\dots,n_d})$, there exists distinct $i,j \in [d]$ such that $(u,v) \in N[(x_i,y_i)] \cap N[(x_j,y_j)]$. By \Cref{CartProdNeighbourIntersection2}, $V(K_{n_1,\dots,n_d})$ is aligned and hence $K_{n_1,\dots,n_d}$ is a subgraph of $G_1$ or $G_2$, as required.
	
	Now suppose $d=2$. Let $(A_1,A_2)$ be the bipartition of $K_{n_1,n_2}$ where $n_1=|A_1| \geq 2$ and $n_2=|A_2| \geq 3$. By \Cref{CartProdNeighbourIntersection1}, $A_1$ is aligned. By \Cref{CartProdNeighbourIntersection2}, $V(K_{n_1,n_2})$ is aligned and hence $K_{n_1,n_2}$ is a subgraph of $G_1$ or $G_2$.
\end{proof}

\subsection{Direct Product}	
The next theorem characterises when a direct product contains a given complete multipartite subgraph.
\begin{thm}\label{CompleteSubgraphsDirectProduct}
	For all integers $d \geq 2$ and $n_1,\dots,n_d \geq 1$, and for all graphs $G_1$ and $G_2$ with non-empty edge sets, $K_{n_1,\dots,n_d}\subseteq G_1\times G_2$ if and only if $K_{a_1,\dots,a_d} \subseteq G_1$ and $K_{b_1,\dots, b_d}\subseteq G_2$ where $a_i,b_i$ are positive integers and $n_i \leq a_ib_i$ for all $i \in [d]$.
\end{thm}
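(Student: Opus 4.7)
The proof naturally splits into two directions, with the forward (necessity) direction being the substantive one.

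\textbf{Sufficiency.} Suppose $K_{a_1,\dots,a_d}\subseteq G_1$ with parts $A_1,\dots,A_d$ and $K_{b_1,\dots,b_d}\subseteq G_2$ with parts $B_1,\dots,B_d$. The plan is to form the ``product partition'' $C_i:=A_i\times B_i\subseteq V(G_1\times G_2)$ for each $i\in[d]$. The $C_i$ are pairwise disjoint, and for $i\ne j$ and any $(u,v)\in C_i$, $(u',v')\in C_j$, we have $u\in A_i$, $u'\in A_j$, and $v\in B_i$, $v'\in B_j$, so that $uu'\in E(G_1)$ and $vv'\in E(G_2)$ by the hypothesis; hence $(u,v)(u',v')\in E(G_1\times G_2)$. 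Thus $(C_1,\dots,C_d)$ exhibits a $K_{a_1b_1,\dots,a_db_d}$ subgraph in $G_1\times G_2$, which contains $K_{n_1,\dots,n_d}$ since $n_i\leq a_ib_i$.

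\textbf{Necessity.} Suppose $K_{n_1,\dots,n_d}\subseteq G_1\times G_2$ with parts $X_1,\dots,X_d$. For each $i\in[d]$, let $A_i\subseteq V(G_1)$ and $B_i\subseteq V(G_2)$ denote the projections of $X_i$ onto the two factors. Since $X_i\subseteq A_i\times B_i$, we immediately obtain $n_i=|X_i|\leq |A_i|\cdot|B_i|$. It remains to show that $A_1,\dots,A_d$ are pairwise disjoint and form a complete multipartite subgraph of $G_1$, and similarly for $B_1,\dots,B_d$ in $G_2$. The key observation is that for any $i\ne j$, any $u\in A_i$, and any $u'\in A_j$, there exist $v,v'$ with $(u,v)\in X_i$ and $(u',v')\in X_j$; since these lie in different parts, they are adjacent in $G_1\times G_2$, forcing $uu'\in E(G_1)$. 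This simultaneously rules out $u=u'$ (as graphs have no loops), giving $A_i\cap A_j=\emptyset$, and shows every cross-edge between $A_i$ and $A_j$ is present in $G_1$. Hence $\bigcup_i A_i$ induces a $K_{a_1,\dots,a_d}$ subgraph of $G_1$ where $a_i:=|A_i|\geq 1$. The same argument applied to the second coordinate yields $K_{b_1,\dots,b_d}\subseteq G_2$ where $b_i:=|B_i|\geq 1$, completing the proof.

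\textbf{Main obstacle.} There is no genuine obstacle: the argument is essentially forced by the definition of the direct product. The one subtle point worth highlighting is the use of the no-self-loop convention to conclude that the projections $A_i$ are pairwise disjoint (and likewise for $B_i$); without this, $A_i$ and $A_j$ could overlap and the clean $K_{a_1,\dots,a_d}$ conclusion would fail. Otherwise the proof reduces to tracking what the direct-product adjacency rule says about projections of a given complete multipartite subgraph.
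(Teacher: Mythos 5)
Your proof is correct and follows essentially the same route as the paper's: the product partition $A_i\times B_i$ for sufficiency, and projections of the parts for necessity, with the disjointness of the projected parts deduced from the fact that direct-product adjacency forces an edge (hence non-equality) in each factor. The paper phrases that last point by noting that each ``row'' $\{(u,y):y\in V(G_2)\}$ and ``column'' is an independent set in $G_1\times G_2$, which is the same observation as your no-loop argument.
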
	

\begin{proof}
	Suppose $K_{a_1,\dots,a_d} \subseteq G_1$ and $K_{b_1,\dots b_d}\subseteq G_2$. Let $(A_1,\dots A_d)$ be the partition of $K_{a_1,\dots,a_d}$ and $(B_1,\dots, B_d)$ be the partition of $K_{b_1,\dots b_d}$. Consider the graph $K_{a_1,\dots,a_d} \times K_{b_1,\dots b_d}$. For all distinct $i,j \in [d]$, every vertex $v_i \in (A_i,B_i)$ is adjacent to every vertex $v_j \in (A_j,B_j)$. Hence $((A_1,B_1),\dots,(A_d,B_d))$ defines a $K_{a_1b_1,\dots,a_db_d}$ subgraph in $K_{a_1,\dots,a_d} \times K_{b_1,\dots b_d}$. Since $n_i \leq a_ib_i$ for all $i \in [d]$, we have $K_{n_1,\dots,n_d}\sse G_1 \times G_2$.
	
	For the second direction, suppose $K_{n_1,\dots,n_d}\subseteq G_1\times G_2$. Let $(N_1,\dots,N_d)$ correspond to the partition of $K_{n_1,\dots,n_d}$. For $i \in \{1,2\}$ and $j \in [d]$, let $N^{(i)}_j$ be the vertices in $N_j$ projected onto $G_i$. By the definition of the direct product, $\{(x,v_2): x \in V(G_1)\}$ and $\{(v_1,y): y \in V(G_2)\}$ are independent sets for all $v_1 \in V(G_1)$ and $v_2 \in V(G_2)$. As such, $N_j^{(i)}$ and $N_k^{(i)}$ are disjoint for distinct $j,k \in [d]$ and $i \in \{1,2\}$. Moreover, as every vertex in $N_j$ is adjacent to every vertex in $N_k$, it follows that $K_{a_1\dots,a_d} \sse G_1[N_1^{(1)} \cup \dots N_d^{(1)}]$ and $K_{b_1,\dots, b_d} \sse G_2[N_1^{(2)} \cup \dots N_d^{(2)}]$ where $a_j=|N_j^{(1)}|$, and $b_j=|N_j^{(2)}|$ for all $j \in [d]$. Since there are $a_jb_j$ vertices in $(N_j^{(1)},N_j^{(2)})$, we have $n_j \leq a_jb_j$ for all $j \in [d]$, as required.
\end{proof}	

\subsection{Strong Product}
Let $K_{n_1,\dots,n_d,\overline{x}}$ be the graph obtained from the complete multipartite graph $K_{n_1,\dots,n_d,x}$ by adding an edge between each pair of vertices in the part of size $x\geq 0$. More formally, $V(K_{a_1,\dots,a_d,\overline{x}})=A_1\cup \dots \cup A_d \cup X$, such that $A_1,\dots,A_d,X$ are pairwise disjoint sets where for distinct $j,k \in [d]$, we have $|A_j|=a_j$, $|X|=x$, and $uv,vw_1,w_1w_2\in E(K_{a_1,\dots,a_d,\overline{x}})$ where $u\in A_j$, $v\in A_k$, and $w_1,w_2\in X$. Observe that $K_{n_1,\dots,n_d,\overline{x}}=K_{n_1,\dots,n_d,1,\dots,1}$ and $K_{n+c}=K_{{1,\dots,1},\bar{c}}$.

\begin{lem}
	\label{CompleteMultipartiteStrongProductLowerBound}
	If $K_{a_1,\dots,a_d,\overline{x}}\subseteq G_1$ and $K_{b_1,\dots,b_d,\overline{y}}\subseteq G_2$, then 
	$$K_{a_1b_1+a_1y+b_1x,\dots,a_db_d+a_dy+b_dx,\overline{xy}} \subseteq G_1\boxtimes G_2.$$ 
\end{lem}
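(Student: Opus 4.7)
The plan is to construct the desired complete multipartite subgraph (with the extra clique on the last part) explicitly in $G_1 \boxtimes G_2$ by taking suitable product sets of the parts witnessing $K_{a_1,\dots,a_d,\overline{x}}\subseteq G_1$ and $K_{b_1,\dots,b_d,\overline{y}}\subseteq G_2$. The key observation driving the construction is that two distinct vertices $(u,v),(u',v')$ of $G_1\boxtimes G_2$ are adjacent if and only if ($u=u'$ or $uu'\in E(G_1)$) and ($v=v'$ or $vv'\in E(G_2)$). So if both coordinates live in (possibly different) parts of a complete multipartite subgraph that additionally contains a clique on one distinguished part, then adjacency holds ``for free'' whenever the two coordinates do not both coincide.

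Concretely, let $(A_1,\dots,A_d,X)$ be the parts of $K_{a_1,\dots,a_d,\overline{x}}\subseteq G_1$ and $(B_1,\dots,B_d,Y)$ be the parts of $K_{b_1,\dots,b_d,\overline{y}}\subseteq G_2$. For each $j\in[d]$, define
\[
C_j := (A_j\times B_j)\cup(A_j\times Y)\cup(X\times B_j),\qquad Z := X\times Y.
\]
I would first check that $C_1,\dots,C_d,Z$ are pairwise disjoint subsets of $V(G_1\boxtimes G_2)$ with $|C_j|=a_jb_j+a_jy+b_jx$ and $|Z|=xy$; this is immediate from the disjointness of the parts in the two given multipartite subgraphs, since any coincidence of a pair of these product sets forces an intersection of two disjoint parts in one of the factors.

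Next, I would verify the adjacency requirements by a routine case check that exploits the strong-product adjacency rule above. For distinct $j,k\in[d]$, take $(u,v)\in C_j$ and $(u',v')\in C_k$: the first coordinates lie in $A_j\cup X$ and $A_k\cup X$ respectively, which are contained in the union of two distinct parts of $K_{a_1,\dots,a_d,\overline{x}}$ (noting that within $X$ we have a clique), so either $u=u'$ or $uu'\in E(G_1)$; by the symmetric reasoning for the second coordinate, $(u,v)(u',v')\in E(G_1\boxtimes G_2)$. For $(u,v)\in C_j$ and $(u',v')\in Z$, $u'\in X$ and $u\in A_j\cup X$ so we again land in a ``same part (which is a clique) or different parts'' situation in $G_1$, and likewise for the second coordinate. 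Finally, for distinct $(u,v),(u',v')\in Z=X\times Y$, both coordinates lie in the cliques $X$ and $Y$, so adjacency in $G_1\boxtimes G_2$ holds.

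There is no real obstacle here; the content of the lemma is the \emph{definition} of the sets $C_j$ and $Z$ and the observation that the extra cliques on $X$ and $Y$ are exactly what is needed so that the ``mixed'' contributions $A_j\times Y$ and $X\times B_j$ can be merged into $C_j$ and still enjoy complete adjacency to $C_k$ and to $Z$. The only mildly fiddly part is enumerating the bipartite adjacency cases between $C_j$ and $C_k$ (nine subcases from the three subsets making up each $C_j$), but every case reduces to the same ``part-versus-part or clique-versus-clique'' pattern in each coordinate.
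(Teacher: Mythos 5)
Your proof is correct and uses exactly the same construction as the paper: the paper's proof consists of precisely the sets $C_j=(A_j\times B_j)\cup(A_j\times Y)\cup(X\times B_j)$ and $Z=X\times Y$, with the adjacency verification left implicit. Your additional case analysis (disjointness of the parts and the coordinatewise ``same clique or different parts'' check) is a sound elaboration of what the paper omits.
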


\begin{proof}
	Let $A_1,\dots,A_d,X$ be the subsets of $V(G_1)$ defining a $K_{a_1,\dots,a_d,\overline{x}}$ subgraph of $G_1$. 	Let $B_1,\dots,B_d,Y$ be the subsets of $V(G_2)$ defining a $K_{b_1,\dots,b_d,\overline{y}}$ subgraph of $G_2$. 
	Then $(A_1 \times B_1)\cup(A_1\times Y)\cup(B_1\times X),\dots, (A_d \times B_d)\cup(A_d\times Y)\cup(B_d\times X), (X\times Y)$ define a $K_{a_1b_1+a_1y+b_1x,\dots,a_db_d+a_dy+b_dx,\overline{xy}}$ subgraph in $G_1\boxtimes G_2$. 	
\end{proof}

The next theorem characterises when a strong product contains a given complete multipartite subgraph.
\begin{thm}\label{StrongCMS}
	For all integers $d \geq 2$ and $n_1,\dots,n_d \geq 1$, and for all graphs $G_1$ and $G_2$, we have $K_{n_1,\dots,n_d} \subseteq G_1\boxtimes G_2$ if and only if there exists non-negative integers $a_1,\dots,a_d,b_1,\dots,b_d,z_1,\dots,z_d,x,y$ such that:
	\begin{compactitem}
		\item $K_{a_1,\dots,a_d,\overline{x}} \subseteq G_1$;
		\item $K_{b_1,\dots,b_d,\overline{y}} \sse G_2$;
		\item $n_j \leq a_jb_j+a_jy+b_jx+z_j$ for all $j \in [d]$; and 
		\item $z_1+\dots+ z_d \leq xy$.
	\end{compactitem}
\end{thm}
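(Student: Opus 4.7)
The plan is to prove both directions directly. For the backward direction I apply \Cref{CompleteMultipartiteStrongProductLowerBound} and then extract the desired $K_{n_1, \dots, n_d}$ by distributing vertices of the $\overline{xy}$-clique across the parts. For the forward direction I perform a structural decomposition of the rows and columns used by the ambient $K_{n_1, \dots, n_d}$; the main obstacle is showing that the rows shared among multiple parts form a clique in $G_1$ (and symmetrically in $G_2$), which requires carefully selecting a witnessing pair of distinct parts.

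For the forward direction, suppose $K_{n_1, \dots, n_d} \subseteq G_1 \boxtimes G_2$ with parts $N_1, \dots, N_d$. For each $u \in V(G_1)$ set $T(u) := \{j \in [d] : (u,v) \in N_j \text{ for some } v \in V(G_2)\}$, and partition $V(G_1)$ into $A_j := \{u : T(u) = \{j\}\}$ (rows used exclusively by part $j$), $X := \{u : |T(u)| \geq 2\}$ (rows shared among at least two parts), and the inactive rows; define $B_j, Y$ analogously for columns. Set $a_j := |A_j|$, $b_j := |B_j|$, $x := |X|$, $y := |Y|$, and $z_j := |N_j \cap (X \times Y)|$. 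The core observation is that whenever $u_1, u_2 \in V(G_1)$ are distinct with $j \in T(u_1)$ and $k \in T(u_2)$ for some $j \neq k$, picking $v_1, v_2$ with $(u_1, v_1) \in N_j$ and $(u_2, v_2) \in N_k$, cross-part adjacency in $K_{n_1, \dots, n_d}$ together with the strong-product definition forces $u_1 u_2 \in E(G_1)$. This immediately yields complete joins between $A_j$ and $A_k$ (for $j \neq k$) and between $A_j$ and $X$; to show $X$ is a clique, for distinct $u_1, u_2 \in X$ I choose any $j \in T(u_1)$ and any $k \in T(u_2) \setminus \{j\}$ (which exists since $|T(u_2)| \geq 2$) and reapply the observation. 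Hence $K_{a_1, \dots, a_d, \overline{x}} \subseteq G_1$, and symmetrically $K_{b_1, \dots, b_d, \overline{y}} \subseteq G_2$. Finally, every $(u,v) \in N_j$ satisfies $u \in A_j \cup X$ and $v \in B_j \cup Y$, so partitioning $N_j$ along this grid yields $n_j \leq a_j b_j + a_j y + b_j x + z_j$, and disjointness of the $N_j$'s gives $\sum_j z_j \leq |X \times Y| = xy$.

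For the backward direction, \Cref{CompleteMultipartiteStrongProductLowerBound} yields a $K_{a_1 b_1 + a_1 y + b_1 x, \dots, a_d b_d + a_d y + b_d x, \overline{xy}}$ subgraph of $G_1 \boxtimes G_2$, with parts $P_1, \dots, P_d$ and a clique $Z$ of size $xy$. I pick pairwise disjoint $Z_j \subseteq Z$ with $|Z_j| = z_j$ (possible since $\sum_j z_j \leq xy$) and take $N_j \subseteq P_j \cup Z_j$ with $|N_j| = n_j$ (possible since $|P_j| + z_j \geq n_j$). Every cross-part pair is adjacent: pairs in $P_j \times P_k$ and in $P_j \times Z$ are edges by the multipartite structure, and pairs in $Z_j \times Z_k$ are edges because $Z$ is a clique. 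Since only subgraph (not induced) containment is required, internal edges within each $N_j$ are irrelevant.
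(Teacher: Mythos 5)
Your proof is correct and follows essentially the same route as the paper: the forward direction uses the same decomposition of rows and columns into exclusively-used sets ($A_j$, $B_j$, matching the paper's $\tilde{N}_j^{(i)}$) versus shared sets ($X$, $Y$, matching $Z^{(i)}$), with the same adjacency argument showing the shared rows form a clique, and the backward direction invokes \cref{CompleteMultipartiteStrongProductLowerBound} and splits the $\overline{xy}$-clique exactly as the paper does. If anything, your verification of the complete joins and of the containment $N_j \subseteq (A_j\times B_j)\cup(A_j\times Y)\cup(X\times B_j)\cup(X\times Y)$ is spelled out more explicitly than in the paper's version.
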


\begin{proof}
	First, suppose that $K_{n_1,\dots,n_d}$ is a subgraph in $G_1\boxtimes G_2$. Let $(N_1,\dots,N_d)$ be the partition of the $K_{n_1,\dots,n_d}$ subgraph. For all $i \in \{1,2\}$ and $j \in [d]$, let $N_j^{(i)}$ be the vertex set for the projection of $G_1\boxtimes G_2[N_j]$ onto $G_i$, let $\tilde{N}_j^{(i)}:=N_j^{(i)}\setminus (\bigcup_{h \in [d],h \neq j} N_h^{(i)})$ and let $Z^{(i)}:= \bigcup_{j \in [d]} (N_j^{(i)}\setminus \tilde{N}_j^{(i)})$. Then $(\tilde{N}_1^{(i)},\dots, \tilde{N}_d^{(i)},Z^{(i)})$ are the colour classes of a complete multipartite subgraph of $G_i$. For all $j \in [d]$, let $a_j:=|\tilde{N}_j^{(1)}|$, $b_j:=|\tilde{N}_j^{(2)}|$, $x:=|Z^{(1)}|$ and $y:=|Z^{(2)}|$. Thus $K_{a_1,\dots,a_d,x} \sse G_1$ and $K_{b_1,\dots,b_d,y}\subseteq G_2$. Consider distinct vertices $u_1,v_1 \in Z^{(1)}$. Then there exists distinct $j,k \in [d]$ such that $(u_1,u_2) \in N_j$ and $(v_1,v_2) \in N_k$ for some $u_2,v_2 \in V(G_2)$ which implies that $u_1v_1\in E(G_1)$. Similarly, the vertices in $Z^{(2)}$ are also pairwise adjacent in $G_2$. Hence, $K_{a_1,\dots,a_d,\overline{x}}\subseteq G_1$ and $K_{b_1,\dots,b_d,\overline{y}}\subseteq G_2$. Let $Z:=Z^{(1)} \times Z^{(2)}$ and $z_j:=|Z \cap N_j|$ for all $j \in [d]$. Then $z_1+\dots z_d \leq |Z|=xy$. Since $N_j \subseteq (\tilde{N}_j^{(1)} \times N_j^{(2)}) \cup (\tilde{N}_j^{(1)} \times Z_2) \cup (Z_1 \cap N_j^{(2)})$ we have $n_j\leq a_jb_j+a_jy+b_jx+z_j$ for all $j \in [d]$ as required.
	
	Now suppose that $K_{a_1,\dots,a_d,\overline{x}}\subseteq G_1$ and $K_{b_1,\dots,b_d,\overline{y}}$ where for all $j \in [d]$, $a_j,b_j,x,y$ are non-negative integers. By \cref{CompleteMultipartiteStrongProductLowerBound}, we have 
	$K_{n_1,\dots,n_d,\overline{xy}} \subseteq G_1\boxtimes G_2$. 
	Splitting the colour class of size $xy$ into sets of size $z_1,\dots, z_d$, and combining $z_j$ with the $j^{th}$ colour classes for all $j$, we obtain a
	$K_{a_1b_1+a_1y+b_1x+z_1,\dots, a_db_d+a_dy+b_dx+z_d}$ subgraph in $G_1\boxtimes G_2$. Since $n_j \leq a_jb_j+a_jy+b_jx+z_j$ for all $j \in [d]$, we have
	$K_{n_1,\dots,n_d} \subseteq G_1\boxtimes G_2$.
\end{proof}
For a graph $G$, let $\omega(G)$ be the maximum size of a clique in $G$. To illustrate the application of \Cref{StrongCMS}, we show how it implies the following well-known result.
\begin{cor}
	For all graphs $G$ and $H$, $\omega(G\boxtimes H)=\omega(G)\omega(H)$.
\end{cor}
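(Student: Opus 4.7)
The corollary asserts $\omega(G\boxtimes H)=\omega(G)\omega(H)$, and I would prove the two inequalities separately, using \Cref{StrongCMS} as the key tool for the harder direction.

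The lower bound is essentially immediate: $K_{\omega(G)}\subseteq G$ and $K_{\omega(H)}\subseteq H$, and $K_{\omega(G)}\boxtimes K_{\omega(H)}=K_{\omega(G)\omega(H)}$ since any two distinct vertices in the strong product of two complete graphs are adjacent. Hence $K_{\omega(G)\omega(H)}\subseteq G\boxtimes H$, so $\omega(G\boxtimes H)\geq\omega(G)\omega(H)$. (Alternatively, this also follows from \Cref{CompleteMultipartiteStrongProductLowerBound}.)

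For the upper bound, set $n:=\omega(G\boxtimes H)$, and assume $n\geq 2$ (the case $n=1$ is trivial since $\omega(G),\omega(H)\geq 1$). View $K_n$ as the complete multipartite graph $K_{1,\dots,1}$ with $d=n$ parts of size $1$, so $K_n\subseteq G\boxtimes H$. Invoking \Cref{StrongCMS} produces non-negative integers $a_1,\dots,a_n,b_1,\dots,b_n,z_1,\dots,z_n,x,y$ such that $K_{a_1,\dots,a_n,\overline{x}}\subseteq G$, $K_{b_1,\dots,b_n,\overline{y}}\subseteq H$, $a_jb_j+a_jy+b_jx+z_j\geq 1$ for every $j\in[n]$, and $\sum_j z_j\leq xy$. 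Let $J:=\{j:a_j\geq 1\}$ and $L:=\{j:b_j\geq 1\}$. A maximum clique in $K_{a_1,\dots,a_n,\overline{x}}$ picks one vertex from each non-empty part $A_j$ together with all of the $\overline{x}$-part, so $\omega(G)\geq|J|+x$ and, symmetrically, $\omega(H)\geq|L|+y$.

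It therefore suffices to prove $n\leq(|J|+x)(|L|+y)$. Partition $[n]$ into $S_0:=\{j:z_j\geq 1\}$ and $S_1:=[n]\setminus S_0$; the fourth condition forces $|S_0|\leq xy$. For each $j\in S_1$ one of $a_jb_j,a_jy,b_jx$ must be positive, placing $j$ in $J\cap L$, or in $J$ with $y\geq 1$, or in $L$ with $x\geq 1$. A short case analysis on whether $x,y$ are zero---if $y=0$ then $S_1\subseteq L$, so $|S_1|\leq|L|\leq|L|(|J|+x)$; the case $x=0$ is symmetric; and if $x,y\geq 1$ then $S_1\subseteq J\cup L$ and $|J\cup L|\leq|J|+|L|\leq|J|y+|L|x$---yields $|S_1|\leq|J||L|+|J|y+|L|x$ in every case. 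Combining with $|S_0|\leq xy$ gives $n\leq(|J|+x)(|L|+y)\leq\omega(G)\omega(H)$. The only delicate step is this final case analysis, which is easily handled by observing that a vanishing $x$ or $y$ eliminates one of the ``off-diagonal'' membership possibilities for $S_1$.
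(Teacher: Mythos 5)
Your proof is correct and follows essentially the same route as the paper: the upper bound invokes \Cref{StrongCMS} on $K_n=K_{1,\dots,1}$, defines the index sets of parts with $a_j\geq 1$ and $b_j\geq 1$, and counts to get $n\leq(|J|+x)(|L|+y)\leq\omega(G)\omega(H)$, exactly as in the paper's argument (which uses $A$, $B$, $Z$ in place of your $J$, $L$, $S_0$). The only differences are cosmetic: you prove the lower bound directly from $K_{\omega(G)}\boxtimes K_{\omega(H)}=K_{\omega(G)\omega(H)}$ rather than via \Cref{StrongCMS}, and your case analysis on whether $x$ or $y$ vanishes is in fact slightly more careful than the paper's one-line inequality $(|A|+x)(|B|+y)\geq|A|+|B|+xy$.
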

\begin{proof}
	Let $c:=\omega(G)$ and $d:=\omega(H)$. Since $K_{0,\dots,0,\bar{c}}\sse G$ and $K_{0,\dots,0,\bar{d}}\sse H$, by \Cref{StrongCMS} we have $K_{1,\dots,1}=K_{cd}\sse G \boxtimes H$ by setting $z_i:=1$ for all $i \in [cd]$. Hence, $\omega(G\boxtimes H)\geq \omega(G)\omega(H)$. Now suppose that $K_n=K_{1,\dots, 1}\sse G \boxtimes H$. By \Cref{StrongCMS}, there exists non-negative integers $a_1,\dots,a_n,b_1,\dots,b_n,z_1,\dots,z_n,x,y$ such that $K_{a_1,\dots,a_n,\overline{x}} \subseteq G_1$; and $K_{b_1,\dots,b_n,\overline{y}} \sse G_2$; and:
	\begin{align}
		&n_i \leq a_ib_i+a_iy+b_ix+z_i \text{ for all $i \in [n]$}; \text{ and}\label{eqnA}\\
		&z_1+\dots+ z_n \leq xy\label{eqnB}.
	\end{align}
	Let $A:=\{i\in [n]:a_i\geq 1\}$, $B:=\{i\in [n]:b_i\geq 1\}$ and $Z:=\{i \in [n]:z_i\geq 1\}$. Then $|A|+|B|+|Z|\geq n$ since by \Cref{eqnA}, $a_i$, $b_i$ or $z_i$ is at least $1$ for all $i \in [n]$. By \Cref{eqnB}, $|Z|\leq xy$. Now $K_{{1,\dots, 1},\bar{x}}=K_c \sse G$, and $K_{{1,\dots, 1},\bar{y}}=K_d\sse H$ where $c:=|A|+x$ and $d:=|B|+y$. Therefore, $cd=(|A|+x)(|B|+y)\geq |A|+|B|+xy\geq n$. Hence $\omega(G)\omega(H)\geq \omega(G\boxtimes H)$, as required.
\end{proof}

The case of complete bipartite subgraphs is of particular interest. The following result generalises a lemma due to \citet[Lemma 7.2]{BGKTW2021twinsmall}.

\begin{cor}\label{StrongCompleteBipartite}
	For all integers $t \geq s \geq 1$ and $\Delta \geq 1$, for all graphs $G$ and $H$ where $G$ is $K_{s,t}$-free and $H$ has maximum degree $\Delta$, $G \boxtimes H$ is $K_{(s-1)(\Delta+1)+1,(s+t)(\Delta+1)}$-free. Moreover, for every integer $n\geq 1$, there exists a $K_{s,t}$-free graph $\tilde{G}$ and a graph $\tilde{H}$ with maximum degree $\Delta$ such that $K_{(s-1)(\Delta+1),n} \sse \tilde{G} \boxtimes \tilde{H}$.
\end{cor}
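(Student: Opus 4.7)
The plan is to apply \cref{StrongCMS} with $d=2$ to a hypothetical $K_{P,Q}$ subgraph of $G\boxtimes H$ where $P:=(s-1)(\Delta+1)+1$ and $Q:=(s+t)(\Delta+1)$, and then use the two projections to force $K_{s,t}\sse G$, contradicting the hypothesis on $G$. Set $D:=\Delta+1$. \cref{StrongCMS} supplies non-negative integers $a_1,a_2,b_1,b_2,z_1,z_2,x,y$ with $K_{a_1,a_2,\bar{x}}\sse G$, $K_{b_1,b_2,\bar{y}}\sse H$, $P\leq a_1b_1+a_1y+b_1x+z_1$, $Q\leq a_2b_2+a_2y+b_2x+z_2$, and $z_1+z_2\leq xy$.

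The first step is to bound $b_1+y\leq D$ and $b_2+y\leq D$ using $\Delta(H)\leq\Delta$. Within $K_{b_1,b_2,\bar{y}}\sse H$, a vertex of $B_2$ has degree at least $b_1+y$, giving $b_1+y\leq\Delta$ when $b_2\geq 1$; a vertex of $Y$ has degree at least $b_1+b_2+y-1$, giving $b_1+b_2+y\leq D$ when $y\geq 1$; and the remaining degenerate case $b_2=y=0$ collapses the StrongCMS bound $Q\leq a_2b_2+a_2y+b_2x+z_2$ to $Q\leq z_2\leq xy=0$, a direct contradiction (and symmetrically for $b_1=y=0$ and $P$).

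Since $z_i\leq xy$, this yields
\[
P\;\leq\; a_1b_1+a_1y+b_1x+xy \;=\; (a_1+x)(b_1+y)\;\leq\; D(a_1+x),
\]
so $a_1+x\geq \lceil P/D\rceil=s$, and the same argument gives $a_2+x\geq Q/D=s+t$. To produce $K_{s,t}\sse K_{a_1,a_2,\bar{x}}\sse G$, I would pick disjoint $X_1,X_2\sse X$ with $|X_1|=\max(0,s-a_1)$ and $|X_2|=\max(0,t-a_2)$. The inequality $|X_1|+|X_2|\leq x$ holds trivially if $a_1\geq s$ or $a_2\geq t$; in the remaining case $a_1<s$ and $a_2<t$ it becomes $a_1+a_2+x\geq s+t$, which follows from the stronger bound $a_2+x\geq s+t$. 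Because $X$ is a clique in $K_{a_1,a_2,\bar{x}}$ that is fully adjacent to both $A_1$ and $A_2$, the sets $U_1:=A_1\cup X_1$ of size $s$ and $U_2:=A_2\cup X_2$ of size $t$ are disjoint and fully biadjacent, giving $K_{s,t}\sse G$, the desired contradiction.

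For the ``moreover'' statement, take $\tilde G:=K_{s-1,n}$: this is $K_{s,t}$-free because any $K_{s,t}$ subgraph of a bipartite graph must place each of its parts (both of size $\geq s$) inside a different side of the bipartition, but one side of $\tilde G$ has only $s-1<s\leq t$ vertices. Take $\tilde H:=K_{1,\Delta}$, the star, which has maximum degree $\Delta$. Writing $U,W$ for the two parts of $\tilde G$ (with $|U|=s-1$ and $|W|=n$) and $c$ for the centre of $\tilde H$, set $A:=U\times V(\tilde H)$ and $B:=W\times\{c\}$, so that $|A|=(s-1)(\Delta+1)$ and $|B|=n$. For any $(u,v)\in A$ and $(w,c)\in B$ we have $uw\in E(\tilde G)$ and $v\in N_{\tilde H}[c]$, so $(u,v)(w,c)\in E(\tilde G\boxtimes\tilde H)$, exhibiting the required $K_{(s-1)(\Delta+1),n}$ subgraph. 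The main obstacle is the combinatorial check in the third paragraph: verifying that it is the stronger bound $a_2+x\geq s+t$ (not merely $\geq t$) that lets the clique $X$ be split to augment both $A_1$ and $A_2$ simultaneously, which is precisely why the theorem statement uses the factor $s+t$ on the larger side.
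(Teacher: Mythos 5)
Your proof is correct and, for the main ($K_{s,t}$-freeness) statement, follows essentially the same route as the paper: apply \cref{StrongCMS} with $d=2$, rule out the degenerate cases $b_1=y=0$ and $b_2=y=0$, use the maximum degree of $H$ to get $b_i+y\leq\Delta+1$, and factor the resulting bounds as $(a_1+x)(\Delta+1)$ and $(a_2+x)(\Delta+1)$ to conclude $a_1+x\geq s$ and $a_2+x\geq s+t$. The one place you go beyond the paper is welcome: the paper ends with the bare assertion ``this forces $K_{s,t}\sse G$,'' whereas you make explicit why the clique $X$ in $K_{a_1,a_2,\overline{x}}$ can be split into disjoint pieces augmenting $A_1$ to size $s$ and $A_2$ to size $t$ simultaneously, and you correctly identify that it is the bound $a_2+x\geq s+t$ (rather than merely $\geq t$) that makes this splitting feasible when both $a_1<s$ and $a_2<t$ --- which is indeed the reason for the $s+t$ factor in the statement. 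For the ``moreover'' part you take a genuinely different, more elementary construction: $\tilde G:=K_{s-1,n}$ and $\tilde H:=K_{1,\Delta}$ with a direct exhibition of the biadjacent sets $U\times V(\tilde H)$ and $W\times\{c\}$, instead of the paper's $\tilde G:=K_{s-2,n,1}$, $\tilde H:=K_{\Delta,0,1}$ verified through \cref{StrongCMS}. Your version buys a self-contained check of $K_{s,t}$-freeness (a clean bipartiteness argument) and avoids the paper's slightly awkward degenerate parts of size $0$ and $s-2$; the paper's version buys a second illustration of \cref{StrongCMS} in action. Both are valid; no gaps.
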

\begin{proof}
	For the sake of contradiction, suppose that $K_{(s-1)(\Delta+1)+1,(s+t)(\Delta+1)}\sse G \boxtimes H$. By \Cref{StrongCMS}, there exists non-negative integers $a_1,a_2,b_1,b_2,z_1,z_2,x,y$ such that $K_{a_1,a_2,\overline{x}} \subseteq G_1$; and $K_{b_1,b_2,\overline{y}} \sse G_2$; and
	\begin{align}
		(s-1)(\Delta+1)+1 &\leq a_1(b_1+y)+b_1x+z_1; \label{eqn1}\\
		(s+t)(\Delta+1) &\leq a_2(b_2+y)+b_2x+z_2; \text{ and} \label{eqn2}\\
		z_1+z_2 &\leq xy. \label{eqn3}
	\end{align}

	Observe that if $b_1$ and $y$ are both equal to $0$ then \Cref{eqn1,eqn3} cannot both be satisfied. As such, $b_2+y\leq \Delta+1$ since $H$ has maximum degree $\Delta$. Similarly, $b_2$ and $y$ cannot both equal to $0$ as this will violate \Cref{eqn2}. Thus, $b_1+y\leq \Delta+1$. By \Cref{eqn3}, $z_1\leq xy$ and $z_2\leq xy$. By \Cref{eqn1}, we have $(\Delta+1)(s-1)+1\leq a_1(b_1+y)+b_1x+xy\leq (a_1+x)(\Delta+1)$. As such, $a_1+x>s-1$. Similarly, by \Cref{eqn2}, $(s+t)(\Delta+1) \leq a_2(b_2+y)+b_2x+xy\leq (a_2+x)(\Delta+1)$. As such, $a_2+x\geq s+t$. This forces $K_{s,t}\sse G$, a contradiction.	
	
	Finally, let $\tilde{G}:=K_{s-2,n,1}$ and $\tilde{H}:=K_{\Delta,0,1}$. Then $\tilde{G}$ is $K_{s,t}$-free and $H$ has maximum degree $\Delta$. By \Cref{StrongCMS}, $K_{n_1,n_2}\sse \tilde{G}\boxtimes \tilde{H}$ where $n_1=\Delta(s-2)+(s-2)+\Delta +1=(s-1)(\Delta+1)$ and $n_2=n$.
\end{proof}

\section{Degeneracy}\label{SectionDegen}
Recall that the \emph{degeneracy} of a graph $G$, $\degen(G)$, is the minimum integer $d$ such that every subgraph of $G$ has minimum degree at most $d$. Equivalently, the \emph{degeneracy} of $G$ is the maximum, taken over all subgraphs $H$ of $G$, of the minimum degree of $H$. 

\citet{bickle2010cores} determined the degeneracy of a cartesian product. In particular, for all graphs $G_1$ and $G_2$, $\degen(G_1\CartProd G_2)=\degen(G_1)+\degen(G_2).$

%

\subsection{Direct Product}
We now prove tight bounds for the degeneracy of a direct product. We make use of the following observation.

\begin{obs}\label{DegenDirectCBG}
	For all integers $t_i\geq s_i \geq 1$ where $i\in \{1,2\}$, 
	$$\degen(K_{s_1,t_1}\times K_{s_2,t_2})=\min{\{s_1t_2,s_2t_1\}}.$$
\end{obs}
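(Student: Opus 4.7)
The plan is to show that the direct product $K_{s_1,t_1}\times K_{s_2,t_2}$ decomposes naturally into two vertex-disjoint complete bipartite graphs, and then compute the degeneracy from these pieces. Write $A_i,B_i$ for the parts of $K_{s_i,t_i}$ with $|A_i|=s_i$ and $|B_i|=t_i$. The vertex set of the direct product partitions into the four blocks $A_1{\times}A_2$, $A_1{\times}B_2$, $B_1{\times}A_2$, $B_1{\times}B_2$. By the definition of the direct product, $(x,u)(y,v)$ is an edge only when $x,y$ lie in different parts of $K_{s_1,t_1}$ and $u,v$ lie in different parts of $K_{s_2,t_2}$. Hence the edges split into two groups: all pairs between $A_1{\times}A_2$ and $B_1{\times}B_2$, and all pairs between $A_1{\times}B_2$ and $B_1{\times}A_2$. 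These groups live on disjoint vertex sets, so the graph is the disjoint union
\[
  K_{s_1,t_1}\times K_{s_2,t_2} \;=\; K_{s_1s_2,\,t_1t_2} \;\sqcup\; K_{s_1t_2,\,s_2t_1}.
\]

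The next step is to recall the elementary fact that $\degen(K_{p,q})=\min\{p,q\}$: the whole graph has minimum degree $\min\{p,q\}$, and any subgraph has at most $\min\{p,q\}$ vertices on its smaller side, so some vertex has degree at most $\min\{p,q\}$. Since degeneracy is the maximum of the degeneracies over the connected components (any subgraph attains its minimum degree inside a single component),
\[
  \degen(K_{s_1,t_1}\times K_{s_2,t_2}) \;=\; \max\bigl\{s_1s_2,\ \min\{s_1t_2,s_2t_1\}\bigr\}.
\]

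Finally, I would observe that $s_i\le t_i$ gives $s_1s_2\le s_1t_2$ and $s_1s_2\le s_2t_1$, so $s_1s_2\le \min\{s_1t_2,s_2t_1\}$ and the max collapses to $\min\{s_1t_2,s_2t_1\}$, as claimed. There is no real obstacle here; the only thing that needs a moment of care is the edge-set decomposition into two complete bipartite components, after which everything follows from the one-line formula for the degeneracy of a complete bipartite graph.
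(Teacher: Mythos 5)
Your proof is correct and follows essentially the same route as the paper, which also justifies the observation via the decomposition of $K_{s_1,t_1}\times K_{s_2,t_2}$ into the disjoint union of $K_{s_1s_2,t_1t_2}$ and $K_{s_1t_2,s_2t_1}$ together with the fact that $\degen(K_{p,q})=\min\{p,q\}$. You simply spell out the details (the edge-set split and the final collapse of the maximum using $s_i\leq t_i$) that the paper leaves implicit.
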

This follows from the fact that $K_{s_1,t_1}\times K_{s_2,t_2}$ is the disjoint union of $K_{s_1t_2,s_2t_1}$ and $K_{s_1s_2,t_1t_2}$, and that $\degen(K_{s,t})=\min\{s,t\}$.

\begin{thm}
	\label{DegenDirect}
	For $i\in\{1,2\}$, let $G_i$ be a graph with maximum degree $\Delta_i$ and $\degen(G_i)=d_i$ that contains $K_{s_i,t_i}$ as a subgraph where $s_i \leq t_i$. Then:
	$$\max{\{d_1d_2,\min{\{s_1t_2,s_2t_1\}},\min{\{\Delta_1,\Delta_2\}}\}} \leq \degen(G_1 \times G_2) \leq \min{\{d_1\Delta_2,d_2\Delta_1\}}.$$	
\end{thm}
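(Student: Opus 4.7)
The plan is to split the proof into the upper bound and the three lower bound terms. For the upper bound $\degen(G_1 \times G_2) \leq \min\{d_1 \Delta_2, d_2 \Delta_1\}$, by symmetry it suffices to show that every non-empty subgraph $H \sse G_1 \times G_2$ contains a vertex of degree at most $d_1 \Delta_2$. I would project: let $V_1 \sse V(G_1)$ be the set of first coordinates appearing in $V(H)$. Since $G_1[V_1]$ is a subgraph of $G_1$ and $\degen(G_1) = d_1$, there exists $u \in V_1$ with at most $d_1$ neighbours in $V_1$. Fix any $v$ with $(u,v) \in V(H)$. Every neighbour $(u',v')$ of $(u,v)$ in $H$ must satisfy $u' \in V_1 \cap N_{G_1}(u)$ (at most $d_1$ choices) and $v' \in N_{G_2}(v)$ (at most $\Delta_2$ choices), giving the desired degree bound. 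Swapping the roles of $G_1$ and $G_2$ yields the symmetric estimate $d_2 \Delta_1$.

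For the lower bound, since degeneracy is monotone under subgraphs, it suffices to exhibit, for each of the three terms, a subgraph of $G_1 \times G_2$ whose degeneracy realises that term. For $d_1 d_2$, pick a subgraph $H_i \sse G_i$ with $\delta(H_i) \geq d_i$ (which exists since $\degen(G_i) = d_i$); then every vertex of $H_1 \times H_2 \sse G_1 \times G_2$ has degree $\deg_{H_1}(u) \cdot \deg_{H_2}(v) \geq d_1 d_2$. For $\min\{s_1 t_2, s_2 t_1\}$, since $K_{s_i,t_i} \sse G_i$ we have $K_{s_1,t_1} \times K_{s_2,t_2} \sse G_1 \times G_2$, and \Cref{DegenDirectCBG} gives the matching degeneracy. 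For $\min\{\Delta_1, \Delta_2\}$, observe that each $G_i$ contains the star $K_{1,\Delta_i}$ at a maximum-degree vertex, so $K_{1,\Delta_1} \times K_{1,\Delta_2} \sse G_1 \times G_2$; by \Cref{DegenDirectCBG} (with $s_i=1$ and $t_i=\Delta_i$) this product has degeneracy $\min\{\Delta_1, \Delta_2\}$.

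The main obstacle is the upper bound: one must carefully combine a structural feature of $G_1$ (low degeneracy, restricted to the projection $V_1$ rather than to all of $V(G_1)$) with a purely combinatorial feature of $G_2$ (bounded maximum degree). Applying the degeneracy hypothesis to $G_1[V_1]$ rather than $G_1$ itself is what makes the argument go through, since it guarantees that the low-degree vertex in $G_1$ is actually used by $H$. The lower bound, by contrast, reduces entirely to choosing the right three witness subgraphs and invoking \Cref{DegenDirectCBG}.
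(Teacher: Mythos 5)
Your proposal is correct and follows essentially the same route as the paper: the upper bound is proved by projecting the subgraph onto $G_1$, using $d_1$-degeneracy of the induced projection to find a low-degree first coordinate, and bounding the second coordinate by $\Delta_2$; the lower bound exhibits the same three witness subgraphs ($H_1\times H_2$ for minimum-degree subgraphs, $K_{s_1,t_1}\times K_{s_2,t_2}$, and $K_{1,\Delta_1}\times K_{1,\Delta_2}$) and invokes \Cref{DegenDirectCBG}. No gaps.
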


\begin{proof}
	We first prove the lower bound. Let $Q_i$ be a subgraph of $G_i$ with $\delta(Q_i)=d_i$. Then $\delta(Q_1 \times Q_2)=d_1d_2$ and thus, $\degen(G_1 \times G_2) \geq d_1d_2$. Furthermore, since $G_i$ contains $K_{1,\Delta_i}$ and $K_{s_i,t_i}$ as a subgraph, by \Cref{DegenDirectCBG}, 
	$\degen(G_1 \times G_2)\geq \min\{s_1t_2,s_2t_1\}$ and $\degen(G_1 \times G_2) \geq \min{\{\Delta_1,\Delta_2\}}$.
	
	Now we prove the upper bound. Without loss of generality, $d_1\Delta_2 \leq d_2\Delta_1$. Let $Z$ be a subgraph of $G_1 \times G_2$. Our goal is to show that $\delta(Z)\leq d_1\Delta_2$. Let $X$ be the projection of $Z$ onto $G_1$. Since $G_1$ is $d_1$-degenerate, there exists a vertex $v_1 \in V(X)$ with $\deg_X(v_1)\leq d_1$. By construction of $X$, we have $(v_1,v_2) \in V(Z)$ for some $v_2 \in V(G_2)$. The neighbourhood of $(v_1,v_2)$ in $Z$ is a subset of $\{(u_1,u_2):u_1v_1 \in E(X), u_2v_2 \in E(G_2)\}$. Now $|\{(u_1,u_2):u_1v_1 \in E(X), u_2v_2 \in E(G_2)\}| \leq |\{u_1 \in V(X):u_1v_1 \in E(X)\}| |\{u_2 \in V(X):u_2v_2 \in E(G_2)\}|\leq d_1 \Delta_2$. Thus $(v_1,v_2)$ has degree at most $d_1\Delta_2$ in $Z$ and hence $\delta(Z)\leq d_1\Delta_2$ as required.
\end{proof}

When both graphs are regular, the upper and lower bounds in \Cref{DegenDirect} are equal. Furthermore, by \Cref{DegenDirectCBG}, the direct product of two complete bipartite graphs realises the upper bound in \Cref{DegenDirect}. We now show that there is a family of graphs that realises the lower bound in \Cref{DegenDirect}.

\begin{lem}\label{DegenDirectLBTight}
	For all integers $d_i,\Delta_i,s_i,t_i\geq 1$ where $s_i\leq d_i \leq \Delta_i$ and $s_i\leq t_i\leq \Delta_i$ for $i \in \{1,2\}$, there exists graphs $G_1$ and $G_2$ where for $i \in \{1,2\}$, $G_i$ has maximum degree $\Delta_i$, $\degen(G_i)=d_i$, and $G_i$ contains $K_{s_i,t_i}$ as a subgraph, such that
	$$\degen(G_1 \times G_2)= \max{\{d_1d_2,\min{\{s_1t_2,s_2t_1\}},\min{\{\Delta_1,\Delta_2\}}\}}.$$
\end{lem}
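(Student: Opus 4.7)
The plan is to construct $G_1$ and $G_2$ as disjoint unions of three simple pieces each, so that the direct product decomposes into nine products of components whose degeneracies can be computed or bounded using Theorem~\ref{DegenDirect} and Observation~\ref{DegenDirectCBG}. For each $i \in \{1,2\}$, I would let $R_i$ be any $d_i$-regular graph (for instance $K_{d_i+1}$) and let $G_i$ be the disjoint union of $R_i$, the star $K_{1,\Delta_i}$, and the complete bipartite graph $K_{s_i,t_i}$. The three components have degeneracies $d_i$, $1$, $s_i$ and maximum degrees $d_i$, $\Delta_i$, $t_i$; since $s_i \leq d_i$ and $d_i, t_i \leq \Delta_i$, this immediately gives $\degen(G_i)=d_i$, $\Delta(G_i)=\Delta_i$, and $K_{s_i,t_i}\subseteq G_i$, as required.

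Because the direct product distributes over disjoint unions and the degeneracy of a disjoint union equals the maximum component degeneracy, $\degen(G_1\times G_2)$ is the maximum degeneracy over the nine products of pieces. Three of these are the ``diagonal'' ones and realise the three terms of the formula: $R_1\times R_2$ is $d_1 d_2$-regular, so has degeneracy $d_1 d_2$; by Observation~\ref{DegenDirectCBG}, $\degen(K_{s_1,t_1}\times K_{s_2,t_2}) = \min\{s_1 t_2, s_2 t_1\}$ and $\degen(K_{1,\Delta_1}\times K_{1,\Delta_2}) = \min\{\Delta_1,\Delta_2\}$. Combined with the lower bound of Theorem~\ref{DegenDirect}, this already gives $\degen(G_1 \times G_2) \geq A$, where $A$ denotes the right-hand side of the lemma.

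For the matching upper bound it remains to bound the six ``cross'' products by $A$. Four of them, namely $R_1\times K_{1,\Delta_2}$, $R_1\times K_{s_2,t_2}$, and their symmetric counterparts, have degeneracy at most $d_1 d_2$ directly from the upper bound in Theorem~\ref{DegenDirect} (using that $\degen(K_{1,\Delta_j})=1$ and $\degen(K_{s_j,t_j})=s_j \leq d_j$). The two remaining mixed products $K_{s_1,t_1}\times K_{1,\Delta_2}$ and $K_{1,\Delta_1}\times K_{s_2,t_2}$ have degeneracies $\min\{t_1, s_1\Delta_2\}$ and $\min\{t_2, s_2\Delta_1\}$ respectively by Observation~\ref{DegenDirectCBG}, and this is where the main obstacle lies. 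A case analysis bounds each of these by $A$: when $t_i \leq \min\{\Delta_1,\Delta_2\}$ the value is at most the third term $\min\{\Delta_1,\Delta_2\}$ of $A$; in the remaining regimes one uses the inequalities $s_j \leq d_j$ and $t_i \leq \Delta_i$, together with the definition of $A$, to bound the value by either $d_1 d_2$ or $\min\{s_1 t_2, s_2 t_1\}$. Completing this case analysis (and, if some corner cases resist, merging the star $K_{1,\Delta_i}$ into the $K_{s_i,t_i}$-component at a shared vertex to suppress an unwanted cross term) is the delicate step of the proof, after which $\degen(G_1\times G_2) = A$ follows.
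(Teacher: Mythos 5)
Your construction is the same as the paper's: $G_i$ is the disjoint union of a $d_i$-regular graph, $K_{s_i,t_i}$ and $K_{1,\Delta_i}$, and the analysis reduces to the nine componentwise products. You have also, correctly, computed the two troublesome cross terms via \Cref{DegenDirectCBG}: $\degen(K_{s_1,t_1}\times K_{1,\Delta_2})=\min\{t_1,s_1\Delta_2\}$ and $\degen(K_{1,\Delta_1}\times K_{s_2,t_2})=\min\{t_2,s_2\Delta_1\}$. The gap is that the case analysis you defer cannot be completed: these quantities are not always at most $A$. Take $(s_1,t_1,d_1,\Delta_1)=(2,8,2,8)$ and $(s_2,t_2,d_2,\Delta_2)=(1,2,1,5)$, which satisfy all the hypotheses of the lemma. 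Then $A=\max\{2,\min\{4,8\},\min\{8,5\}\}=5$, while $\min\{t_1,s_1\Delta_2\}=\min\{8,10\}=8>A$. Your fallback of identifying the star with a vertex of the $K_{s_1,t_1}$-component cannot rescue this either: every admissible $G_1$ contains $K_{s_1,t_1}$ and every admissible $G_2$ contains $K_{1,\Delta_2}$ (a star centred at a maximum-degree vertex), so $K_{s_1,t_1}\times K_{1,\Delta_2}$, which has degeneracy $8$, is a subgraph of $G_1\times G_2$ no matter how the pieces are assembled. Hence no construction achieves the stated value for these parameters.

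For what it is worth, the paper's own proof stumbles at exactly the step you flagged: it records $\degen(K_{s_1,t_1}\times K_{1,\Delta_2})$ as $\min\{t_1,\Delta_2\}$ (which is indeed at most $\min\{\Delta_1,\Delta_2\}\leq A$), whereas \Cref{DegenDirectCBG} actually gives $\min\{s_1\Delta_2,t_1\}$. So the obstacle you isolated is genuine and not merely an unfinished computation on your part: the lower bound of \Cref{DegenDirect}, and hence the target value $A$ in this lemma, would need to be augmented by the forced terms $\min\{t_1,s_1\Delta_2\}$ and $\min\{t_2,s_2\Delta_1\}$ before the remainder of your (and the paper's) argument goes through.
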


\begin{proof}
	Let $\tilde{d}:=\max{\{d_1d_2,\min{\{s_1t_2,s_2t_1\}},\min{\{\Delta_1,\Delta_2\}}\}}$.
	For $i \in \{1,2\}$, let $H_i$ be a $d_i$-regular graph and let $G_i$ be the disjoint union of $H_i$, $K_{s_i,t_i}$ and $K_{1,\Delta_i}$. Then $G_i$ has maximum degree $\Delta_i$, $\degen(G_i)=d_i$, and $G_i$ contains $K_{s_i,t_i}$ and $K_{1,\Delta_i}$ as subgraphs. 
	
	We now show that $\degen(G_1 \times G_2)=\tilde{d}.$ \Cref{DegenDirect} provides the lower bound. For the upper bound, our goal is to show that $\degen(J_1 \times J_2)\leq \tilde{d}$ whenever $J_i \in \{H_i,K_{s_i,t_i},K_{1,\Delta_i}\}$ for $i \in \{1,2\}$. This implies that $\degen(G_1 \times G_2)\leq \tilde{d}$ since every connected subgraph of $G_1 \times G_2$ is a subgraph of $J_1 \times J_2$ for some choice of $J_i \in \{H_i,K_{s_i,t_i},K_{1,\Delta_i}\}$ where $i \in \{1,2\}$. We proceed by case analysis.	 
	
	First, $\degen(H_1\times H_2)=d_1d_2$ since $H_1$ and $H_2$ are regular. By \Cref{DegenDirectCBG},
	\begin{align*}
		 \degen(K_{s_1,t_1}\times K_{s_2,t_2})&=\min{\{s_1t_2,s_2t_1\}},\\ 
		 \degen(K_{s_1,t_1}\times K_{1,\Delta_2})&=\min{\{t_1,\Delta_2\}},
		 \\ \degen(K_{1,\Delta_1}\times K_{s_2,t_2})&=\min{\{\Delta_1,t_2\}}, \text{ and}\\
		 \degen(K_{1,\Delta_1}\times K_{1,\Delta_2})&=\min{\{\Delta_1,\Delta_2\}}.
	\end{align*}
	Clearly the degeneracy is at most $\tilde{d}$ for each of the above graphs. Now consider the graph $H_1 \times K_{s_2,t_2}$. Each vertex has degree $d_1s_2$ or $d_1t_2$. Furthermore, the set of vertices with degree equal to $d_1t_2$ are an independent set. Hence, every subgraph of $H_1 \times K_{s_2,t_2}$ contains a vertex with degree at most $d_1s_2$. As such, $\degen(H_1 \times K_{s_2,t_2})=d_1s_2$. By the same reasoning: $\degen(H_1 \times K_{1,\Delta_2})=d_1$, $\degen(K_{s_1,t_1} \times H_2 )=s_1d_2$, and 		$\degen(K_{1,\Delta_1} \times H_2 )=d_2$. Again, the degeneracy is at most $\tilde{d}$ for each of the above graphs. Hence, $\degen(J_1 \times J_2)\leq \tilde{d}$ whenever $J_i \in \{H_i,K_{s_i,t_i},K_{1,\Delta_i}\}$ for $i \in \{1,2\}$, as required.
\end{proof}

We conclude that the upper and lower bounds in \Cref{DegenDirect} are tight.

\subsection{Strong Product}
We now consider the degeneracy of a strong product. As the next lemma illustrates, more cases arise for this graph product compared to the other two.
\begin{lem}\label{DegenStrongCBG}
	For all integers $t_i\geq s_i \geq 1$, $\degen(K_{s_1,t_1}\boxtimes K_{s_2,t_2})=f(s_1,t_1,s_2,t_2)$
	where $$f(s_1,t_1,s_2,t_2):=\max\{s_1+s_2+s_1s_2,\min{\{t_1+t_2,s_1(t_2+1),s_2(t_1+1)\}},\min{\{s_1t_2,s_2t_1\}}\}.$$
\end{lem}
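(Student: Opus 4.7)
The plan is to match upper and lower bounds. Let $(S_i,T_i)$ be the bipartition of $K_{s_i,t_i}$ with $|S_i|=s_i$ and $|T_i|=t_i$, so $V(K_{s_1,t_1}\boxtimes K_{s_2,t_2})$ partitions into four blocks $S_1\times S_2$, $S_1\times T_2$, $T_1\times S_2$, and $T_1\times T_2$. Using the identity $\deg_{G_1\boxtimes G_2}((a,v))=d_1+d_2+d_1d_2$ stated in the preliminaries, the degrees in these four blocks are respectively $(t_1+1)(t_2+1)-1$, $(t_1+1)(s_2+1)-1$, $(s_1+1)(t_2+1)-1$, and $s_1+s_2+s_1s_2$.

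For the lower bound I will exhibit three nested subgraphs that realise the three terms in the maximum. The whole product has minimum degree $s_1+s_2+s_1s_2$, attained on $T_1\times T_2$. Deleting $T_1\times T_2$ removes the $t_1t_2$ direct neighbours of each $(a,v)\in S_1\times S_2$, the $t_2$ cartesian neighbours of each vertex in $T_1\times S_2$, and the $t_1$ cartesian neighbours of each vertex in $S_1\times T_2$, leaving degrees $t_1+t_2$, $s_1(t_2+1)$, and $s_2(t_1+1)$ and hence minimum degree $\min\{t_1+t_2,s_1(t_2+1),s_2(t_1+1)\}$. Additionally deleting $S_1\times S_2$ leaves the induced subgraph on $(S_1\times T_2)\cup(T_1\times S_2)$; a short check (each block is independent in the product, and every cross pair is joined through the direct product) identifies this with $K_{s_1t_2,s_2t_1}$, whose minimum degree is $\min\{s_1t_2,s_2t_1\}$.

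For the upper bound I will do a short case analysis on which of the four blocks a subgraph $H$ meets. If $V(H)$ meets $T_1\times T_2$, a vertex there already has degree $\leq s_1+s_2+s_1s_2$ in the whole graph. If $V(H)$ avoids both $T_1\times T_2$ and $S_1\times S_2$, the same computation used in the lower bound gives $H\subseteq K_{s_1t_2,s_2t_1}$, so $\delta(H)\leq\min\{s_1t_2,s_2t_1\}$. Otherwise $V(H)$ meets $S_1\times S_2$ but not $T_1\times T_2$; if additionally $V(H)$ misses $T_1\times S_2$ (or, symmetrically, $S_1\times T_2$), then $V(H)\subseteq S_1\times V(G_2)$ induces $s_1$ disjoint copies of $K_{s_2,t_2}$ and so $\delta(H)\leq s_2\leq s_1+s_2+s_1s_2$; and if $V(H)$ meets all of $S_1\times S_2$, $T_1\times S_2$, and $S_1\times T_2$, then reading one vertex off each block and applying the block-wise degree count (after throwing away the absent $T_1\times T_2$ neighbours) yields bounds $t_1+t_2$, $s_1(t_2+1)$, and $s_2(t_1+1)$ respectively, so $\delta(H)\leq\min\{t_1+t_2,s_1(t_2+1),s_2(t_1+1)\}$.

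The main obstacle will be keeping this upper-bound case split organised, particularly the subcases in which $V(H)$ meets only a proper subset of the three non-$T_1\times T_2$ blocks; once the four-block degree tabulation is in hand, the individual bounds are straightforward applications of the cartesian/direct decomposition of $\boxtimes$.
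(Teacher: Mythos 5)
Your proposal is correct and follows essentially the same route as the paper: the same four-block partition $S_1\times S_2$, $S_1\times T_2$, $T_1\times S_2$, $T_1\times T_2$, the same three witness subgraphs for the lower bound (the whole product, the product minus $T_1\times T_2$, and the bipartite component $K_{s_1t_2,s_2t_1}$ of the direct product), and the same case analysis on which blocks a subgraph meets for the upper bound. The only cosmetic difference is that you verify the $K_{s_1t_2,s_2t_1}$ structure directly rather than citing the direct-product observation, and your bound of $s_2$ in the $V(H)\subseteq S_1\times V(K_{s_2,t_2})$ case is slightly sharper than the paper's $\max\{s_1,s_2\}$; neither affects the argument.
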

\begin{proof}
	Let $\hat{d}:=f(s_1,t_1,s_2,t_2)$. Let $(S_1,T_1)$ and $(S_2,T_2)$ be the bipartition of $K_{s_1,t_1}$ and $K_{s_2,t_2}$ respectively. Let $G:=K_{s_1,t_1}\boxtimes K_{s_2,t_2}$. Let ${A}:=S_1 \times S_2$, ${B}:=S_1 \times T_2$, ${C}:=T_1 \times S_2$, and ${D}:=T_1 \times T_2$.
	
	We first prove the lower bound by showing that there exists a subgraph of $G$ with minimum degree $\hat{d}$. Observe that $\delta(G)=s_1+s_2+s_1s_2$. By \Cref{DegenDirectCBG}, there exists a subgraph $J$ of $K_{s_1,t_1}\times K_{s_2,t_2}\sse G$ with $\delta(J)=\min{\{s_1t_2,s_2t_1\}}$. Let $H$ be the subgraph of $G$ induced by ${A}\cup {B} \cup {C}$. For this subgraph, $\deg_H({a})=t_1+t_2$, $\deg_H({b})=s_2(t_1+1)$, and $\deg_H({c})=s_1(t_2+1)$ for every ${a} \in {A}$, ${b} \in {B}$ and ${c} \in {C}$. As such, $\delta(H)=\min{\{t_1+t_2,s_1(t_2+1),s_2(t_1+1)\}}$. Therefore, either $G$, $J$ or $H$ has minimum degree $\hat{d}$.
	
	We now prove the upper bound. Let $Z$ be a subgraph of $G$. We proceed by case analysis to show that the minimum degree of $Z$ is at most $\hat{d}$. 
	
	First, if $V(Z)$ is a subset of either ${A}$, ${B}$, ${C}$ or ${D}$, then $\delta(Z)=0$ since these are independent sets. So we may assume that $V(Z)$ intersects at least two of those sets. Now if there exists a vertex ${v}\in V(Z)\cap {D}$, then $\deg_Z({v})\leq s_1+s_2+s_1s_2 \leq \hat{d}$. So we may assume that $V(Z)\cap {D}=\emptyset$. Now if $V(Z)\cap {A}$, $V(Z)\cap {B}$, and $V(Z)\cap {C}$ are all non-empty, then $\delta(Z)\leq\min{\{t_1+t_2,s_1(t_2+1),s_2(t_1+1)\}}$. Furthermore, if $V(Z)\sse {B}\cup {C}$, then $\delta(Z)\leq\min{\{s_1t_2,s_2t_1\}}$. Finally, if $V(Z)\sse {A}\cup {B}$ or $V(Z)\sse {A}\cup {C}$, then $\delta(Z)\leq \max\{s_1,s_2\}$. In each case, $\delta(Z)\leq \hat{d}$ as required. 
\end{proof}

The next theorem provides tight bounds for the degeneracy of a strong product.
\begin{thm}
	\label{DegenStrong}
	For $i\in\{1,2\}$, let $G_i$ be a graph with maximum degree $\Delta_i$ and $\degen(G_i)=d_i$ that contains $K_{s_i,t_i}$ as a subgraph where $s_i \leq t_i$. Then
	$$g(d_1,\Delta_1,s_1,t_1,d_2,\Delta_2,s_2,t_2)\leq \degen(G_1 \boxtimes G_2) \leq h(d_1,\Delta_1,d_2,\Delta_2)$$
	where $f$ is specified by \Cref{DegenStrongCBG} and
	\begin{align*}
		g(d_1,\Delta_1,s_1,t_1,d_2,\Delta_2,s_2,t_2)&=\max{\{d_1+d_2+d_1d_2,f(s_1,t_1,s_2,t_2),\min{\{\Delta_1,\Delta_2\}}+1\}},\\
		h(d_1,\Delta_1,d_2,\Delta_2)&=d_1+d_2+\min{\{d_1\Delta_2,d_2\Delta_1\}}.
	\end{align*}
\end{thm}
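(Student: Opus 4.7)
The plan is to prove the lower and upper bounds separately. For the lower bound, I verify that each of the three quantities inside the maximum defining $g$ is individually a lower bound on $\degen(G_1\boxtimes G_2)$; this suffices since $\degen(G_1\boxtimes G_2)$ dominates all three.

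First, for the $d_1+d_2+d_1d_2$ term, I take a subgraph $Q_i\subseteq G_i$ with $\delta(Q_i)=d_i$ for each $i$; the subgraph $Q_1\boxtimes Q_2$ then has minimum degree at least $d_1+d_2+d_1d_2$ by the degree formula for the strong product. The $f(s_1,t_1,s_2,t_2)$ term follows immediately from \Cref{DegenStrongCBG} applied to the $K_{s_1,t_1}\boxtimes K_{s_2,t_2}$ subgraph of $G_1\boxtimes G_2$. The $\min\{\Delta_1,\Delta_2\}+1$ term follows from \Cref{DegenStrongCBG} applied to $K_{1,\Delta_1}\boxtimes K_{1,\Delta_2}$ (noting $G_i$ contains $K_{1,\Delta_i}$ as a subgraph): a short calculation shows $f(1,\Delta_1,1,\Delta_2)=\max\{3,\min\{\Delta_1,\Delta_2\}+1\}$, which is at least $\min\{\Delta_1,\Delta_2\}+1$.

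For the upper bound, assume without loss of generality that $d_1\Delta_2\leq d_2\Delta_1$, so $h=d_1+d_2+d_1\Delta_2$. Let $Z$ be an arbitrary subgraph of $G_1\boxtimes G_2$; I locate a low-degree vertex of $Z$ by a two-step projection. Let $X$ be the projection of $Z$ onto $G_1$. Since $X\subseteq G_1$ is $d_1$-degenerate, choose $v_1\in V(X)$ with $\deg_X(v_1)\leq d_1$. Let $Y:=\{u_2\in V(G_2):(v_1,u_2)\in V(Z)\}$, viewed as a subgraph of $G_2$. Since $G_2[Y]$ is $d_2$-degenerate, choose $v_2\in Y$ with $\deg_{G_2[Y]}(v_2)\leq d_2$. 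I then bound $\deg_Z((v_1,v_2))$ by partitioning its neighbours into three types: those in the column $\{v_1\}\times V(G_2)$ contribute at most $d_2$ (since they must lie in $Y$); the neighbours of the form $(u_1,v_2)$ with $u_1\in N_X(v_1)$ contribute at most $d_1$; and the diagonal neighbours $(u_1,u_2)$ with $u_1\in N_X(v_1)$ and $u_2\in N_{G_2}(v_2)$ contribute at most $d_1\Delta_2$. The total is $d_1+d_2+d_1\Delta_2=h$, so $\delta(Z)\leq h$, which yields the upper bound.

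The main obstacle is the upper bound. The key insight is that the double projection decouples the contributions of the two coordinates: within the column through $v_1$ only the second-coordinate degeneracy $d_2$ is relevant, while across other columns we pay $\Delta_2$ per column but incur only $d_1$ columns due to the first-coordinate degeneracy. A naive attempt to bound $\deg_Z((v_1,v_2))$ by $d_1+d_2+d_1d_2$ fails because $Z$ need not preserve a low degeneracy-witnessing degree at $v_2$ in columns other than the one through $v_1$; that asymmetry is precisely what forces the $\Delta_2$ factor, and the \textsf{WLOG} symmetry then gives the stated $\min\{d_1\Delta_2,d_2\Delta_1\}$.
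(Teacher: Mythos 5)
Your proposal is correct and follows essentially the same route as the paper: the lower bound via $Q_1\boxtimes Q_2$ and \Cref{DegenStrongCBG} applied to $K_{s_1,t_1}\boxtimes K_{s_2,t_2}$ and $K_{1,\Delta_1}\boxtimes K_{1,\Delta_2}$, and the upper bound via the same two-step projection (first onto $G_1$ to find $v_1$ of degree at most $d_1$, then within the column through $v_1$ to find $v_2$ of degree at most $d_2$) with the identical three-way split of $\deg_Z((v_1,v_2))$ into $d_1+d_2+d_1\Delta_2$. Your explicit computation of $f(1,\Delta_1,1,\Delta_2)$ is a small detail the paper leaves implicit, but the argument is the same.
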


\begin{proof}
	We first prove the lower bound. Let $Q_i$ be a subgraph of $G_i$ with $\delta(Q_i)=d_i$. Then $\delta(Q_1 \boxtimes Q_2)=d_1+d_2+d_1d_2$ and thus, $\degen(G_1 \boxtimes G_2)\geq d_1+d_2+d_1d_2$. Furthermore, since $G_i$ contains $K_{1,\Delta_i}$ and $K_{s_i,t_i}$ as a subgraph, by \Cref{DegenStrongCBG}, 
	$\degen(G_1 \boxtimes G_2)\geq f(s_1,t_1,s_2,t_2)$ and $\degen(G_1 \boxtimes G_2)\geq\min{\{\Delta_1,\Delta_2\}}+1$. 
	
	We now prove the upper bound. Let $Z$ be a subgraph of $G_1 \boxtimes G_2$. Our goal is to show that $\delta(Z)\leq h(d_1,\Delta_1,d_2,\Delta_2)$. Without loss of generality, $d_1\Delta_2 \leq d_2\Delta_1$. Let $Z$ be a subgraph of $G_1 \boxtimes G_2$ and let $X$ be the projection of $Z$ onto $G_1$. Since $G_1$ is $d_1$-degenerate, there exists a vertex $v_1 \in V(X)$ with $\deg_X(v_1)\leq d_1$. Let $Y$ be the subgraph of $G_2$ induced by the set of vertices $v_2$ in $G_2$ such that $(v_1,y)\in V(Z)$. Since $G_2$ is $d_2$-degenerate, there exists a vertex $v_2 \in V(Y)$ with $\deg_Y(v_2)\leq d_2$.	By construction of $Y$, $(v_1,v_2) \in V(Z)$. By the definition of the strong product, 
		$$\deg_Z((v_1,v_2)) \leq |N_X(v_1)|+|N_Y(v_2)|+|N_X(v_1)||N_{G_2}(v_2)|\leq d_1+d_2+d_1\Delta_2.$$
	Hence $\delta(Z)\leq h(d_1,\Delta_1,d_2,\Delta_2)$ as required.
\end{proof}

We now construct a family of graphs that realises the lower bound in \Cref{DegenStrong}.

\begin{lem}
	For all integers $d_i,\Delta_i,s_i,t_i\geq 1$ where $s_i\leq d_i \leq \Delta_i$ and $s_i\leq t_i\leq \Delta_i$ for $i \in \{1,2\}$, there exists graphs $G_1$ and $G_2$ where for $i \in \{1,2\}$, $G_i$ has maximum degree $\Delta_i$, $\degen(G_i)=d_i$, and $G_i$ contains $K_{s_i,t_i}$ as a subgraph, such that
	$$\degen(G_1 \boxtimes G_2)=g(d_1,\Delta_1,s_1,t_1,d_2,\Delta_2,s_2,t_2)$$
	
	where $g$ is specified in \Cref{DegenStrong}.
\end{lem}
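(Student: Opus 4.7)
The plan is to mirror the construction used in \Cref{DegenDirectLBTight}. For each $i\in\{1,2\}$, let $H_i$ be a connected $d_i$-regular graph (which exists since $d_i\ge 1$), and let $G_i$ be the disjoint union of $H_i$, $K_{s_i,t_i}$, and $K_{1,\Delta_i}$. The hypotheses $s_i\le d_i\le\Delta_i$ and $s_i\le t_i\le\Delta_i$ give immediately that $\Delta(G_i)=\Delta_i$, $\degen(G_i)=d_i$, and $K_{s_i,t_i}\subseteq G_i$; the lower bound $\degen(G_1\boxtimes G_2)\ge g$ then follows from \Cref{DegenStrong}.

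For the matching upper bound, I would exploit the fact that the connected components of $G_1\boxtimes G_2$ are of the form $C_1\boxtimes C_2$ where each $C_i$ is a connected component of $G_i$. Since the degeneracy of a graph equals the maximum of the degeneracies of its connected components, every connected subgraph of $G_1\boxtimes G_2$ embeds into $J_1\boxtimes J_2$ for some choice of $J_i\in\{H_i,\,K_{s_i,t_i},\,K_{1,\Delta_i}\}$, so it suffices to bound $\degen(J_1\boxtimes J_2)\le g$ for each of the nine pairs.

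The nine cases split into three natural groups. The regular-$\boxtimes$-regular case $H_1\boxtimes H_2$ is $(d_1+d_2+d_1d_2)$-regular, realising the first term of $g$. The four regular-$\boxtimes$-complete-bipartite cases have degeneracies of the form $d_i+s+d_is$ or $2d_i+1$ (computed from the strong-product degree formula, matching the upper bound in \Cref{DegenStrong}), all of which are bounded by $d_1+d_2+d_1d_2$ using $s_i\le d_i$ and $d_i\ge 1$. The four complete-bipartite-$\boxtimes$-complete-bipartite cases are handled via \Cref{DegenStrongCBG}: $K_{s_1,t_1}\boxtimes K_{s_2,t_2}$ gives exactly the $f$-term of $g$; $K_{1,\Delta_1}\boxtimes K_{1,\Delta_2}$ evaluates to $\max\{3,\min\{\Delta_1,\Delta_2\}+1\}$, absorbed into $g$ since $d_i\ge 1$; and the two mixed cases $K_{s_1,t_1}\boxtimes K_{1,\Delta_2}$ and $K_{1,\Delta_1}\boxtimes K_{s_2,t_2}$ require a case analysis on which minimand of $f(s_1,t_1,1,\Delta_2)$ is active in order to reduce to one of the three terms defining $g$.

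The main technical obstacle is precisely these mixed complete-bipartite cases, since the summands $t_1+1$ and $s_1(\Delta_2+1)$ appearing inside $f(s_1,t_1,1,\Delta_2)$ are not transparently dominated by $g$. I would dispatch this by splitting on whether $t_1\le\Delta_2$: in that subcase the minimand $t_1+1\le\min\{\Delta_1,\Delta_2\}+1$ controls the bound; otherwise the inner $\min$ reduces to either $s_1(\Delta_2+1)$ or $\min\{s_1\Delta_2,t_1\}$, which are absorbed into the $d_1+d_2+d_1d_2$ term of $g$ via $s_1\le d_1$ and $d_2\ge 1$, or into the $f(s_1,t_1,s_2,t_2)$ term via monotonicity of $f$ in the hypothesis $s_2\ge 1$. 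Carrying out this dispatch uniformly in every parameter regime is the crux of the argument.
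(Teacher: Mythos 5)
Your construction and case split are the same as the paper's (each $G_i$ a disjoint union of a $d_i$-regular graph, $K_{s_i,t_i}$ and $K_{1,\Delta_i}$, then bound $\degen(J_1\boxtimes J_2)$ over the nine component pairs), and you have correctly located the crux in the mixed cases $K_{s_1,t_1}\boxtimes K_{1,\Delta_2}$ and $K_{1,\Delta_1}\boxtimes K_{s_2,t_2}$. But the dispatch you propose there does not go through. Take $s_1=2$, $t_1=\Delta_1=100$, $d_1=2$ and $s_2=t_2=d_2=1$, $\Delta_2=50$; all hypotheses $s_i\le d_i\le\Delta_i$ and $s_i\le t_i\le\Delta_i$ hold. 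By \Cref{DegenStrongCBG},
$$\degen(K_{2,100}\boxtimes K_{1,50})=f(2,100,1,50)=\max\{5,\min\{150,102,101\},\min\{100,100\}\}=101,$$
realised by the subgraph induced on $(S_1\times S_2)\cup(S_1\times T_2)\cup(T_1\times S_2)$, whose minimum degree is $s_2(t_1+1)=101$. Meanwhile $f(s_1,t_1,s_2,t_2)=f(2,100,1,1)=5$ and $d_1+d_2+d_1d_2=5$, so $g=\max\{5,5,\min\{100,50\}+1\}=51$. None of your absorptions applies: $s_1(\Delta_2+1)=102$ is not controlled by $d_1+d_2+d_1d_2$ because $\Delta_2$ may vastly exceed $d_2$; monotonicity of $f$ points the wrong way since $t_2\le\Delta_2$; and $t_1+1=101$ is not at most $\min\{\Delta_1,\Delta_2\}+1=51$ because $t_1$ is bounded only by $\Delta_1$, not by $\Delta_2$.

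Moreover the gap is not repairable by a cleverer construction: any admissible $G_1$ contains $K_{s_1,t_1}$ and any admissible $G_2$ contains $K_{1,\Delta_2}$, so $\degen(G_1\boxtimes G_2)\ge f(s_1,t_1,1,\Delta_2)$ always, which exceeds $g$ in the example above. The statement therefore fails for these parameters unless $g$ is augmented by the cross terms $f(s_1,t_1,1,\Delta_2)$ and $f(1,\Delta_1,s_2,t_2)$ (with the lower bound in \Cref{DegenStrong} strengthened accordingly). For what it is worth, the paper's own proof has the same defect: it asserts $\degen(K_{s_1,t_1}\boxtimes K_{1,\Delta_2})=\max\{2s_1+1,\min\{t_1,s_1\Delta_2\}\}$, which both mis-evaluates $f(s_1,t_1,1,\Delta_2)$ (dropping the middle term of $f$) and, even as stated, is not bounded by $\tilde d$ in the example above. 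So your instinct that this case is the obstacle is exactly right; the obstacle is fatal rather than merely technical.
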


\begin{proof}
	Let $\tilde{d}:=g(d_1,\Delta_1,s_1,t_1,d_2,\Delta_2,s_2,t_2)$.
	Our proof parallels the proof of \Cref{DegenDirectLBTight}. For $i \in \{1,2\}$, let $H_i$ be a $d_i$-regular graph and let $G_i$ be the disjoint union of $H_i$, $K_{s_i,t_i}$ and $K_{1,\Delta_i}$. Then $G_i$ has maximum degree $\Delta_i$, $\degen(G_i)=d_i$, and $G_i$ contains $K_{s_i,t_i}$ and $K_{1,\Delta_i}$ as subgraphs. 
	
	We now show that $\degen(G_1 \boxtimes G_2)=\tilde{d}.$ \Cref{DegenStrong} provides the lower bound. For the upper bound, our goal is to show that $\degen(J_1 \boxtimes J_2)\leq \tilde{d}$ whenever $J_i \in \{H_i,K_{s_i,t_i},K_{1,\Delta_i}\}$ for $i \in \{1,2\}$. We proceed by case analysis.	 
	
	First, $\degen(H_1\boxtimes H_2)=d_1+d_2+d_1d_2$ since $H_1$ and $H_2$ are regular. By \Cref{DegenStrongCBG},
	\begin{align*}
		\degen(K_{s_1,t_1}\boxtimes K_{s_2,t_2})&=f(s_1,t_1,s_2,t_2) \text{ where $f$ is specifed by \Cref{DegenStrongCBG}},\\ 
		\degen(K_{s_1,t_1}\boxtimes K_{1,\Delta_2})&=\max{\{2s_1+1,\min{\{t_1,s_1\Delta_2\}}\}},
		\\ 
		\degen(K_{1,\Delta_1}\boxtimes K_{s_2,t_2})&=\max{\{2s_2+1,\min{\{s_2\Delta_1,t_2\}}\}}\text{, and}\\
		\degen(K_{1,\Delta_1}\boxtimes K_{1,\Delta_2})&=\min{\{\Delta_1,\Delta_2\}}+1.
	\end{align*}
	For each of the above graphs, the degeneracy is at most $\tilde{d}$. Now consider the graph $H_1 \boxtimes K_{s_2,t_2}$. Each vertex in $H_1 \boxtimes K_{s_2,t_2}$ has degree $d_1+s_2+d_1s_2$ or $d_1+t_2+d_1t_2$. Let $A:=\{v \in V(H_1 \boxtimes K_{s_2,t_2}):\deg(v)=d_1+s_2+d_1s_2\}$ and $B:=\{v \in V(H_1 \boxtimes K_{s_2,t_2}):\deg(v)=d_1+t_2+d_1t_2\}$. Let $Z$ be a subgraph of $H_1 \boxtimes K_{s_2,t_2}$. If $V(Z)\cap A$ is non-empty, then $\delta(Z)\leq d_1+s_2+d_1s_2$. Otherwise, $V(Z)\sse B$ in which case $\delta(Z)\leq d_1$. Thus, $\degen(H_1 \boxtimes K_{s_2,t_2})=d_1+s_2+d_1s_2$. By the same reasoning: $\degen(H_1 \boxtimes K_{1,\Delta_2})=2d_1+1$, $\degen(K_{s_1,t_1} \boxtimes H_2 )=d_2+s_1+d_2s_1$, and $\degen(K_{1,\Delta_1} \boxtimes H_2 )=2d_2+1$. Again, the degeneracy is at most $\tilde{d}$ for each of the above graphs. Having considered all possibilities, it follows that $\degen(J_1 \boxtimes J_2)\leq \tilde{d}$ whenever $J_i \in \{H_i,K_{s_i,t_i},K_{1,\Delta_i}\}$ for $i \in \{1,2\}$, as required.
\end{proof}

The next lemma describes a family of graphs that realises the upper bound in \Cref{DegenStrong}.

\begin{lem}
	For all integers $k_1,k_2,d_1,d_2\geq 1$ there exists graphs $G_1$ and $G_2$ where for $i \in \{1,2\}$, $G_i$ has maximum degree $\Delta_i:=k_id_i$, and $\degen(G_i)=d_i$, such that
	$$\degen(G_1 \boxtimes G_2)= d_1+d_2+\min{\{d_1\Delta_2,d_2\Delta_1\}}.$$
\end{lem}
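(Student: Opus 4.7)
The plan is to take $G_i := K_{d_i, \Delta_i}$ as the primary building block. Standard computations give $\Delta(G_i) = \Delta_i = k_id_i$ (using $d_i \le \Delta_i$) and $\degen(G_i) = \min\{d_i,\Delta_i\} = d_i$, matching the required parameters. The upper bound $\degen(G_1 \boxtimes G_2) \le d_1+d_2+\min\{d_1\Delta_2, d_2\Delta_1\} =: h$ is immediate from \cref{DegenStrong}, so the whole argument reduces to a matching lower bound.

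To establish the lower bound I would invoke \cref{DegenStrongCBG}, which identifies $\degen(K_{d_1,\Delta_1} \boxtimes K_{d_2,\Delta_2})$ with the explicit expression $f(d_1, \Delta_1, d_2, \Delta_2)$. Without loss of generality suppose $d_1 \Delta_2 \le d_2 \Delta_1$, so that the target becomes $h = d_1 + d_2 + d_1 \Delta_2$. The main step is a case analysis comparing the three terms inside the $\max$ defining $f$ to $h$ and identifying which term attains equality: the first term $d_1+d_2+d_1 d_2$ reaches $h$ exactly when $\Delta_2 = d_2$ (i.e.\ $k_2 = 1$), the outer $\min\{d_1\Delta_2, d_2\Delta_1\} = d_1\Delta_2$ is close to $h$ up to the additive $d_1+d_2$, and the middle $\min$ over $\{\Delta_1+\Delta_2, d_1(\Delta_2+1), d_2(\Delta_1+1)\}$ takes over in the remaining regimes; each of the three is routinely checked to be $\le h$.

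The main obstacle is that the bipartite choice $G_i = K_{d_i,\Delta_i}$ alone does not always realise $f = h$, most notably when both $k_1, k_2 \ge 2$. To cover those cases I would augment the construction by taking $G_i$ to be the disjoint union of $K_{d_i, \Delta_i}$ with auxiliary components such as a clique $K_{d_i + 1}$ and a star $K_{1, \Delta_i}$, exactly in the spirit of the preceding lemma. Since the strong product distributes over disjoint unions, $\degen(G_1 \boxtimes G_2) = \max_{J_1, J_2} \degen(J_1 \boxtimes J_2)$ over choices $J_i \in \{K_{d_i,\Delta_i}, K_{d_i+1}, K_{1,\Delta_i}\}$, so the verification reduces to computing each $\degen(J_1 \boxtimes J_2)$ (via \cref{DegenStrongCBG} or direct analysis for star factors) and exhibiting a single pair whose product realises degeneracy $h$, while \cref{DegenStrong} ensures none of the component products exceed $h$.
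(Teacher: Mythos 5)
Your first two steps are fine (the parameters of $K_{d_i,\Delta_i}$ are as claimed, and the upper bound is indeed \Cref{DegenStrong}), and you correctly diagnose that $K_{d_1,\Delta_1}\boxtimes K_{d_2,\Delta_2}$ alone falls short of $h:=d_1+d_2+\min\{d_1\Delta_2,d_2\Delta_1\}$ when $k_1,k_2\geq 2$. The genuine gap is in the repair: the disjoint union of $K_{d_i,\Delta_i}$, $K_{d_i+1}$ and $K_{1,\Delta_i}$ does \emph{not} realise the lower bound either. Since the strong product of disjoint unions splits into the products of the components, you would need a single pair $J_1\boxtimes J_2$ with degeneracy $h$, and no such pair exists once $k_1,k_2\geq 2$. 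Concretely, take $d_1=d_2=1$ and $k_1=k_2=2$, so $\Delta_1=\Delta_2=2$ and $h=4$. Your components are (up to coincidence) $K_{1,2}$ and $K_2$, and every product of two of them has degeneracy $3$: $\degen(K_{1,2}\boxtimes K_{1,2})=3$ by \Cref{DegenStrongCBG}, $\degen(K_2\boxtimes K_2)=\degen(K_4)=3$, and $\degen(K_2\boxtimes K_{1,2})=3$. The structural reason is that $h$ exceeds every term of the maximum in \Cref{DegenStrongCBG} by the additive $d_1+d_2$ (strictly, when $\Delta_i>d_i$), and that surplus can only come from a subgraph in which degree-$\Delta_i$ vertices and degree-$d_i$ vertices of each factor lie in the \emph{same connected component} and are adjacent; disjoint components can never supply it.

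The paper's construction makes each $G_i$ connected for exactly this reason: $G_i$ consists of a clique $A_i$ on $d_i+1$ vertices joined to an independent set $B_i$ of size $(d_i+1)(k_i-1)$ so that every vertex of $A_i$ has $d_i(k_i-1)$ neighbours in $B_i$ (hence degree $\Delta_i$) and every vertex of $B_i$ has $d_i$ neighbours in $A_i$. Degeneracy $d_i$ still holds because $B_i$ is independent. The extremal subgraph of $G_1\boxtimes G_2$ is then the one induced on $(A_1\times A_2)\cup(B_1\times A_2)\cup(A_1\times B_2)$: the $A_1\times A_2$ vertices pick up extra degree from both side classes, and the vertices of $B_1\times A_2$ and $A_1\times B_2$ have degrees exactly $d_1+d_2+\Delta_2 d_1$ and $d_1+d_2+\Delta_1 d_2$ respectively, giving minimum degree $h$. (In the example above this $G_i$ is $P_4$, and $P_4\boxtimes P_4$ contains a $12$-vertex subgraph of minimum degree $4$.) To fix your proof you would need to replace the disjoint-union gadget with a connected graph of this kind.
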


\begin{proof}
	For $i \in \{1,2\}$, let $G_i$ be a graph with vertex partition $(A_i,B_i)$ where $A_i$ is a clique on $d_i+1$ vertices and $B_i$ is an independent set of size $(d_i+1)(k_i-1)$ such that each $a_i \in A_i$ has $d_i(k_i-1)$ neighbours in $B_i$ and each $b_i \in B_i$ has $d_i$ neighbours in $A_i$. Then $G_i$ has maximum degree $\Delta_i$ and $\degen(G_i)=d_i$.
	
	We now show that $\degen(G_1 \boxtimes G_2)=d_1+d_2+\min{\{d_1\Delta_2,d_2\Delta_1\}}$. \Cref{DegenStrong} provides the upper bound. Let ${X}:=A_1 \times A_2$, ${Y}:=B_1 \times A_2$, and ${Z}:=A_1 \times B_2$. Let $H$ be the subgraph of $G_1 \boxtimes G_2$ induced on ${X} \cup {Y} \cup {Z}$. Let ${x}\in {X}$, ${y}\in {Y}$ and ${z}\in {Z}$. Then
	\begin{align*}
		\deg_H({x})&=|N_H({x})\cap {X}|+|N_H({x})\cap {Y}|+|N_H({x})\cap {Z}|\\ 
		&=\big((d_1+1)(d_2+1)-1\big)+\big(d_2(\Delta_1-d_1)\big)+\big(d_1(\Delta_2-d_2)\big),
		\\ 
		\deg_H({y})&=|N_H({y})\cap {X}|+|N_H({y})\cap {Y}|+|N_H({y})\cap {Z}|\\
		&=\big(d_1(d_2+1)\big)+\big(d_2\big)+\big(d_1(\Delta_2-d_2)\big)= d_1+d_2+\Delta_2d_1, \text{ and}
		\\ 
		\deg_H({z})&=|N_H({z})\cap {X}|+|N_H({z})\cap {Y}|+|N_H({z})\cap {Z}|\\
		&=\big(d_2(d_1+1)\big)+\big(d_2(\Delta_1-d_1)\big)+\big(d_1\big)=d_1+d_2+d_2\Delta_1.
	\end{align*}
	In which case, the minimum degree of $H$ is	$d_1+d_2+\min{\{d_1\Delta_2,d_2\Delta_1\}}$ as required.
\end{proof}
We conclude that the upper and lower bounds in \Cref{DegenStrong} are tight.

\section{Pathwidth and Treewidth}\label{Sectionpwtw}
We now consider the treewidth and pathwidth of a graph product. For graphs $G$ and $H$, an \emph{$H$-decomposition} of $G$ consists of a collection $\B=\{B_x\sse V(G): x \in V(H)\}$ of \emph{bags} with the following properties. 

\begin{compactitem}
	\item For each $v \in V(G)$, $H[\{x \in V(H): v \in B_x\}]$ is a non-empty connected subgraph of $H$; and
	\item For each $uv \in E(G)$, there is a node $x \in V(H)$ such that $u,v \in B_x$.
\end{compactitem}

The \emph{width} of an $H$-decomposition is $\max\{|B_x|-1:x \in V(H)\}$. We denote an $H$-decomposition by the pair $(H,\B)$. A \emph{tree-decomposition} is a $T$-decomposition for some tree $T$. The \emph{treewidth} of a graph $G$, $\tw(G)$, is the minimum width of a tree-decomposition of $G$. A \emph{path-decomposition} is a $P$-decomposition for some path $P$. The \emph{pathwidth} of a graph $G$, $\pw(G)$, is the minimum width of a path-decomposition of $G$. Since a path is a tree, $\tw(G)\leq \pw(G)$. Treewidth and pathwidth respectively measure how similar a graph is to a tree and to a path. These two graph parameters are minor-closed. Tree-decompositions and path-decompositions were introduced by Robertson and Seymour \cite{robertson1986algorithmic,robertson1983graph}; the more general notion of $H$-decompositions were introduced by \citet{diestel2005graph}.

Let $G$ be a graph. We say that $X,Y \sse V(G)$ \textit{touch} if $X \cap Y \neq \emptyset$ or there is an edge of $G$ between $X$ and $Y$. A \textit{bramble}, $\mathcal{B}$, is a set of pairwise touching connected subgraphs. A set $S \sse V(G)$ is a \textit{hitting set} of $\mathcal{B}$ if $S$ intersects every element of $\mathcal{B}$. The \textit{order} of $\mathcal{B}$ is the minimum size of a hitting set of $\mathcal{B}$. The canonical example of a bramble of order $\ell$ is the set of crosses (union of a row and column) in the $\ell \times \ell$ grid. The following `Treewidth Duality Theorem' shows the intimate relationship between treewidth and brambles.
\begin{thm}[\cite{seymour1993graph}]\label{BrambleDuality}
	A graph $G$ has treewidth at least $\ell$ if and only if $G$ contains a bramble of order at least $\ell+1$.
\end{thm}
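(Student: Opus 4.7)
The plan is to prove each direction separately. The direction from brambles to a lower bound on treewidth is a short Helly-type argument, while the converse encapsulates the substantive content of the theorem.

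\emph{Forward direction (bramble $\Rightarrow$ large treewidth).} Suppose $\mathcal{B}$ is a bramble in $G$ of order at least $\ell+1$ and let $(T, \{B_x : x \in V(T)\})$ be an arbitrary tree decomposition of $G$. For each $B \in \mathcal{B}$, define $T_B := \{x \in V(T) : V(B) \cap B_x \neq \emptyset\}$. Since $B$ is connected and the bags containing any fixed vertex of $G$ form a subtree of $T$, a short argument shows that $T_B$ also induces a subtree of $T$. If two elements $B, B' \in \mathcal{B}$ touch, then either $V(B) \cap V(B') \neq \emptyset$, giving $T_B \cap T_{B'} \neq \emptyset$ directly, or some edge $uv \in E(G)$ with $u \in V(B)$ and $v \in V(B')$ lies in a common bag $B_x$, forcing $x \in T_B \cap T_{B'}$. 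The family $\{T_B : B \in \mathcal{B}\}$ is thus a collection of pairwise intersecting subtrees of $T$, and the Helly property for subtrees of a tree produces a node $x^*$ common to all of them. Then $B_{x^*}$ is a hitting set of $\mathcal{B}$, so $|B_{x^*}| \geq \ell+1$ and the width is at least $\ell$. Hence $\tw(G) \geq \ell$.

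\emph{Converse direction (large treewidth $\Rightarrow$ bramble).} My route here is through the cops-and-robber game characterisation of treewidth: it is a classical theorem that the minimum number of helicopter cops needed to capture a visible, arbitrarily-fast fugitive on $G$ equals $\tw(G)+1$. Under the hypothesis $\tw(G) \geq \ell$, therefore, the fugitive has a winning strategy against any $\ell$ cops. From such a strategy one extracts a bramble of order at least $\ell+1$ by considering, for each cop position $C$ of size at most $\ell$, the connected component of $G \setminus C$ in which the fugitive takes refuge, and using the consistency of the strategy under cop moves $C \to C'$ to show that any two such refuges are joined by a path in $G$ avoiding $C \cap C'$, and in particular touch. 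Because any purported hitting set of size at most $\ell$ would itself serve as a valid cop position which the strategy evades, the order of the resulting bramble is at least $\ell+1$.

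\emph{Main obstacle.} The genuine difficulty lies in establishing the cops-and-robber characterisation above, equivalently, in proving that if every bramble of $G$ has order at most $\ell$, then $\tw(G) \leq \ell-1$. The argument of \cite{seymour1993graph} proceeds by induction: the absence of a bramble of order $\ell+1$ is used to exhibit a balanced separator of size at most $\ell$, which splits $G$ into pieces whose inductive tree decompositions can be glued along the separator into a tree decomposition of $G$ of width at most $\ell-1$. Verifying that the bramble hypothesis indeed forces the existence of such separators is the technical core, and I would cite the original proof for this step rather than reproduce it.
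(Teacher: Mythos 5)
This theorem is not proved in the paper at all --- it is imported verbatim from Seymour and Thomas \cite{seymour1993graph}, so there is no internal proof to compare against. Your forward direction is correct and complete in outline: the union of the bag-subtrees over the vertices of a connected subgraph is itself a subtree, touching elements give pairwise intersecting subtrees, and the Helly property for subtrees of a tree yields a single bag that hits every bramble element, forcing width at least $\ell$. For the converse you correctly identify that the entire substance of the theorem lives there, sketch the standard route (via the cops-and-robber characterisation, or equivalently the inductive extraction of balanced separators from the absence of a high-order bramble), and explicitly defer the technical core to the original source --- which is exactly what the paper itself does by citing rather than proving. So your treatment is, if anything, more detailed than the paper's; the only caveat is that the proposal as written does not constitute a self-contained proof of the hard direction, and should not be presented as one.
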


Many papers have studied the treewidth and pathwidth for various families of graphs that are defined by a graph product, see \cite{CK2006treewidth, chvatalova1974optimal, djelloul2009logical, fitzgerald1974indexing, harper1966optimal, harper1999isoperimetric, harper2003bandwidth, KA2002bandwidth, KOY2014product, LTS1981bandwidth, moghadam2005bandwidth, OS2011bandwidth}. In this section, we continue this work by presenting new lower bounds for cartesian and strong product as well as characterising when the cartesian, direct and strong products have bounded treewidth and when they have bounded pathwidth.

\subsection{Cartesian and Strong Product}\label{SectiontwCart}
First consider the treewidth of the cartesian and strong products. The following upper bound is well-known (see \cite{BGHK1995approximating} for an implicit proof).

\begin{lem}\label{twStrongProduct}
	For all graphs $G_1$ and $G_2$,
	\begin{align*}
		\tw(G_1 \CartProd G_2)\leq \tw(G_1\boxtimes G_2)&\leq (\tw(G_1)+1)\textsf{v}(G_2)-1, \text{ and}\\
		\pw(G_1 \CartProd G_2)\leq\pw(G_1\boxtimes G_2)&\leq (\pw(G_1)+1)\textsf{v}(G_2)-1.
	\end{align*}
\end{lem}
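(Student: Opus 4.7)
The plan is to prove the two pairs of inequalities separately: the left inequalities will come from subgraph monotonicity, and the right inequalities from a single, explicit construction that builds a decomposition of $G_1 \boxtimes G_2$ out of a decomposition of $G_1$. Both right inequalities are handled by the same construction, differing only in whether the host graph is a tree or a path.

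For the left inequalities, I would first observe that $G_1 \CartProd G_2$ is a spanning subgraph of $G_1 \boxtimes G_2$ by the very definition $G_1 \boxtimes G_2 = (G_1 \CartProd G_2) \cup (G_1 \times G_2)$. Treewidth and pathwidth are both monotone under subgraphs, since restricting each bag of an $H$-decomposition of a graph to the vertex set of a subgraph yields an $H$-decomposition of that subgraph of no larger width. Thus $\tw(G_1 \CartProd G_2) \le \tw(G_1 \boxtimes G_2)$ and $\pw(G_1 \CartProd G_2) \le \pw(G_1 \boxtimes G_2)$.

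For the right inequalities, I would start from an optimal tree-decomposition $(T, \mathcal{B})$ of $G_1$, where $\mathcal{B}=\{B_x : x \in V(T)\}$ has width $\tw(G_1)$, and define the blown-up bags $B_x' := B_x \times V(G_2)$ for each $x \in V(T)$. This gives a candidate $T$-decomposition of $G_1 \boxtimes G_2$ with $|B_x'| = |B_x|\cdot \textsf{v}(G_2) \le (\tw(G_1)+1)\textsf{v}(G_2)$, so its width is at most $(\tw(G_1)+1)\textsf{v}(G_2)-1$. The two verifications needed are routine: for each $(u,v) \in V(G_1 \boxtimes G_2)$, the set $\{x \in V(T): (u,v) \in B_x'\}$ equals $\{x \in V(T): u \in B_x\}$, which is non-empty and connected in $T$ by hypothesis; and for each edge $(a,v)(b,u) \in E(G_1 \boxtimes G_2)$, the definition of the strong product forces $a=b$ or $ab \in E(G_1)$, so in either case there is a node $x$ with $\{a,b\} \subseteq B_x$, and then $\{(a,v),(b,u)\} \subseteq B_x'$.

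The pathwidth bound follows by exactly the same construction, starting instead from an optimal path-decomposition of $G_1$; the host graph $P$ remains a path, so the resulting $P$-decomposition of $G_1 \boxtimes G_2$ is a path-decomposition. There is no real obstacle here — the argument is a standard blow-up — so the only thing worth flagging is that the verification of the connectedness condition relies crucially on projecting onto the first coordinate, which is why the factor $\textsf{v}(G_2)$ (rather than $\textsf{v}(G_1)$) appears on the right-hand side, and one could of course swap the roles of $G_1$ and $G_2$ to get the symmetric bound.
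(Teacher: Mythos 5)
Your proof is correct and follows essentially the same route as the paper: the left inequalities come from subgraph monotonicity since $G_1 \CartProd G_2 \subseteq G_1 \boxtimes G_2$, and the right inequalities use exactly the bag blow-up $B_x \mapsto B_x \times V(G_2)$ that the paper records (in the more general language of $H$-decompositions) in its sketch of the subsequent lemma. No issues.
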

The proof of \Cref{twStrongProduct} in \cite{BGHK1995approximating} shows the following, more general result which we use in \Cref{SectionDirectProduct}.

\begin{lem}\label{HDecompStrongProduct}
	For all graphs $G_1$, $G_2$ and $H$, if $G_1$ has an $H$-decomposition with width at most $k$, then $G_1 \boxtimes G_2$ has an $H$-decomposition with width at most $(k+1)\textsf{v}(G_2)-1$.
\end{lem}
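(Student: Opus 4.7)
The plan is to lift the given $H$-decomposition of $G_1$ to $G_1 \boxtimes G_2$ in the most naive way: replace each bag $B_x$ by $B'_x := B_x \times V(G_2)$, and show that $(H, \{B'_x : x \in V(H)\})$ is an $H$-decomposition of $G_1 \boxtimes G_2$ of the claimed width.

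First I would check the width. Since $|B_x| \le k+1$ for each $x \in V(H)$, we have $|B'_x| \le (k+1)\textsf{v}(G_2)$, so the width of the new decomposition is at most $(k+1)\textsf{v}(G_2) - 1$, as desired.

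Next I would verify the two $H$-decomposition axioms. For the vertex axiom, fix $(a,v) \in V(G_1 \boxtimes G_2)$; by construction $(a,v) \in B'_x$ if and only if $a \in B_x$, so the subgraph of $H$ induced by $\{x : (a,v) \in B'_x\}$ coincides with the subgraph induced by $\{x : a \in B_x\}$, which is non-empty and connected by the hypothesis on $(H, \mathcal{B})$. For the edge axiom, take an arbitrary edge $(a,v)(b,u) \in E(G_1 \boxtimes G_2)$. By definition of the strong product either $ab \in E(G_1)$ (possibly together with $v = u$ or $uv \in E(G_2)$), or $a = b$ and $uv \in E(G_2)$. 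In the first case, the original decomposition gives some $x \in V(H)$ with $\{a,b\} \subseteq B_x$, and then $\{(a,v),(b,u)\} \subseteq B'_x$. In the second case, pick any $x$ with $a \in B_x$; since $a = b$, both $(a,v)$ and $(b,u) = (a,u)$ lie in $B'_x$. Either way some bag $B'_x$ contains both endpoints.

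There is no serious obstacle here: the argument is a direct unpacking of the definitions of the strong product and of $H$-decompositions. The only subtlety worth flagging is the edge case $a = b$, which needs to be handled separately because the original decomposition only guarantees a common bag for the endpoints of edges of $G_1$, not for arbitrary pairs of vertices of $G_1$ -- but this case is immediately dispatched using that $\{x : a \in B_x\}$ is non-empty. Specializing to $H$ a tree (resp.\ a path) and optimizing over tree-decompositions (resp.\ path-decompositions) of $G_1$ then recovers \Cref{twStrongProduct}.
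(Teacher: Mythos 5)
Your proposal is correct and is exactly the paper's construction: the paper's (sketched) proof also replaces each bag $W_t$ by $\{(v,u):v\in W_t,\ u\in V(G_2)\}$ and notes that this gives an $H$-decomposition of $G_1\boxtimes G_2$ of width at most $(k+1)\textsf{v}(G_2)-1$. You simply carry out the verification of the axioms (including the $a=b$ case) that the paper leaves implicit.
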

\begin{proof}[Proof Sketch]
	 Let $(H,\W)$ be an $H$-decomposition of $G_1$ with width $k$. Modify $(H,\W)$ to obtain a tree-decomposition $(H,\B)$ of $G_1 \boxtimes G_2$ by setting $B_t:=\{(v,u):v \in W_t, u \in V(G_2)\}$ for all $t \in V(H)$. Then $(H,\B)$ is an $H$-decomposition of $G_1 \boxtimes G_2$ with width at most $(k+1)\textsf{v}(G_2)-1$.
\end{proof}

A natural question is whether the upper bound in \Cref{twStrongProduct} is tight up to a constant factor. The following result shows that this is not the case.

\begin{prop}\label{NaiveTight}
	For all $n\geq k+1\geq 0$, there exists a connected graph $G_{k,n}$ such that $\tw(G_{k,n})= k$ and $\textsf{v}(G_{k,n})=n$ and
		$$\tw(G_{k,n}\boxtimes G_{k,n})=\Theta(n+k^2).$$
\end{prop}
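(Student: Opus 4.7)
I will take $G_{k,n}$ to be the graph obtained from a clique $K_{k+1}$ by attaching a pendant path of length $n-k-1$ at one vertex. Concretely, $V(G_{k,n})=\{v_1,\dots,v_n\}$ with $v_1,\dots,v_{k+1}$ pairwise adjacent and $v_{k+1}v_{k+2},v_{k+2}v_{k+3},\dots,v_{n-1}v_n \in E(G_{k,n})$. This graph is connected, has $n$ vertices, admits the path decomposition $\{v_1,\dots,v_{k+1}\},\{v_{k+1},v_{k+2}\},\dots,\{v_{n-1},v_n\}$ of width $k$, and contains $K_{k+1}$; hence $\tw(G_{k,n})=k$.

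\emph{Lower bound.} Since $K_{k+1}\sse G_{k,n}$, we have $K_{(k+1)^2}=K_{k+1}\boxtimes K_{k+1}\sse G_{k,n}\boxtimes G_{k,n}$, so $\tw(G_{k,n}\boxtimes G_{k,n})\ge(k+1)^2-1=k^2+2k$. The pendant path (together with $v_{k+1}$) gives $P_{n-k}\sse G_{k,n}$, and so $P_{n-k}\CartProd P_{n-k}\sse G_{k,n}\boxtimes G_{k,n}$. Since the $(n-k)\times(n-k)$ grid has treewidth $n-k$, we also get $\tw(G_{k,n}\boxtimes G_{k,n})\ge n-k$. Averaging the two bounds yields $\tw(G_{k,n}\boxtimes G_{k,n})\ge\tfrac12\bigl((k^2+2k)+(n-k)\bigr)=\tfrac12(n+k^2+k)=\Omega(n+k^2)$.

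\emph{Upper bound.} The vertex $v_{k+1}$ is a cut vertex of $G_{k,n}$: removing it leaves a $K_k$ on $\{v_1,\dots,v_k\}$ and a $P_{n-k-1}$ on $\{v_{k+2},\dots,v_n\}$. Consequently, the set
$$X:=\bigl(V(G_{k,n})\times\{v_{k+1}\}\bigr)\cup\bigl(\{v_{k+1}\}\times V(G_{k,n})\bigr),\qquad |X|=2n-1,$$
separates $G_{k,n}\boxtimes G_{k,n}$ into at most four components, isomorphic to $K_{k^2}$, $K_k\boxtimes P_{n-k-1}$, $P_{n-k-1}\boxtimes K_k$, and $P_{n-k-1}\boxtimes P_{n-k-1}$. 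By \Cref{twStrongProduct} (taking the path as the first factor in the middle two) these have treewidths at most $k^2-1$, $2k-1$, $2k-1$, and $2(n-k-1)-1$, respectively. Using the standard fact that $\tw(G)\le|X|+\max_i\tw(H_i)$ whenever $X$ separates $G$ into components $H_1,\dots,H_r$ (add $X$ to every bag of a tree decomposition of each $H_i$ and glue the results through a central bag equal to $X$), we conclude
$$\tw(G_{k,n}\boxtimes G_{k,n})\le(2n-1)+\max\{k^2-1,\,2(n-k-1)-1\}=O(n+k^2).$$

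Combining both bounds gives $\tw(G_{k,n}\boxtimes G_{k,n})=\Theta(n+k^2)$, as required. The only conceptual point is that the naive upper bound of \Cref{twStrongProduct} gives $O(kn)$, which is wasteful whenever $k=o(n)$; exploiting the size-one cut $\{v_{k+1}\}$ in $G_{k,n}$, which becomes the cross-shaped separator $X$ of size $2n-1$ in $G_{k,n}\boxtimes G_{k,n}$, is what closes the gap between $O(kn)$ and $O(n+k^2)$.
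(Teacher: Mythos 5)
Your proof is correct, and it rests on the same two observations as the paper's: the graph is a clique $K_{k+1}$ with a pendant path, and the cross-shaped set $X$ through the cut vertex is the key to beating the naive $O(kn)$ bound of \Cref{twStrongProduct}. The differences are in execution. For the lower bound the paper gets the linear term from \Cref{twCartProdLower} (connectivity), giving $n-1$, whereas you embed $P_{n-k}\CartProd P_{n-k}$ and get $n-k$; both suffice once averaged with the $k^2$ term from the clique, and your averaging step $\max(a,b)\ge(a+b)/2$ is fine. For the upper bound the paper writes down an explicit tree decomposition (with $X$ in every bag, clique bags, clique-path bags, and L-shaped bags for the $P\boxtimes P$ part) and verifies the three axioms by hand over about a page; you instead observe that $X$ separates $G_{k,n}\boxtimes G_{k,n}$ into the four products $K_k\boxtimes K_k$, $K_k\boxtimes P_{n-k-1}$, $P_{n-k-1}\boxtimes K_k$, $P_{n-k-1}\boxtimes P_{n-k-1}$, bound each by \Cref{twStrongProduct}, and glue with the standard fact $\tw(G)\le|X|+\max_i\tw(H_i)$. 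Your version is more modular and avoids the bag-by-bag verification, at the cost of invoking the gluing fact (which is standard and whose one-line proof you correctly sketch). The only blemishes are cosmetic and shared with the paper's statement: the degenerate cases $k\in\{-1,0\}$ do not literally satisfy $\tw(G_{k,n})=k$ for a connected $n$-vertex graph, but this does not affect the asymptotic claim.
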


\begin{proof}
	We make no attempt to optimise the constants in this proof. Let $G_{k,n}$ be the graph that consists of a path $P_{\tilde{n}}=(v_0,\dots,v_{\tilde{n}-1})$ on $\tilde{n}=n-k$ vertices and a complete graph $K_{k+1}$ where $V(P_n)\cap V(K_k)=\{v_0\}$. Then $\tw(G_{k,n})= k$ and $\textsf{v}(G_{k,n})=n$.
	
	We now show that $\tw(G_{k,n}\boxtimes G_{k,n})=\Theta(n+k^2)$.	 For the lower bound, since $\G_{k,n}$ is connected, by \Cref{twCartProdLower}, $\tw(G_{k,n}\boxtimes G_{k,n})\geq n-1$. Moreover, since $K_k\boxtimes K_k\simeq K_{k^2}$, $\tw(G_{k,n}\boxtimes G_{k,n})\geq \tw(K_{k^2})=k^2-1$. Thus $\tw(G_{k,n}\boxtimes G_{k,n})=\Omega(n+k^2)$.
	
	For the upper bound, we construct a tree decomposition of $G_{k,n}\boxtimes G_{k,n}$ with width $O(n+k^2)$. We begin by specifying the bags. For $i,j,\ell \in [\tilde{n}-1]$, let
	\begin{align*}
		X&:=\{(v_0,u),(u,v_0):u \in V(G_{k,n})\},\\
		W_y&:=\{(a,b):a,b \in K_k\}\cup X,\\
		C_{u_i}&:=\{(a,v_{j-1}),(a,v_j): a \in K_k\}\cup X,\\
		D_{w_j}&:=\{(v_{i-1},a),(v_i,a): a \in K_k\}\cup X, \text{ and}\\
		L_{z_\ell}&:=\{(v_{\ell-1},v_s),(v_{\ell},v_s),(v_s,v_{\ell-1}),(v_s,v_{\ell}):s \in [\ell,n-1]\}\cup X.
	\end{align*}
	 Observe that $|X|\leq 2n$, $|W_y|\leq 2n+k^2$, $|C_i|=|D_i|\leq 2n+2k$ and $|L_{w_\ell}|\leq 6n$. Moreover, observe that $V(K_k\boxtimes K_k)\sse W_y$, $ 	V(K_k\boxtimes P_n)\sse \bigcup_{i \in [n-1]} C_{u_i}$, $V(P_n \boxtimes K_k)\sse \bigcup_{i \in [\tilde{n}-1]} D_{z_i}$, and $V(P_n \boxtimes P_n)\sse \bigcup_{\ell \in [\tilde{n}-1]} L_{w_\ell}$. Thus, every bag has size $O(n+k^2)$ and every vertex is in a bag. For the tree to index the decomposition, let $P^{(C)}:=(u_1,\dots,u_{\tilde{n}-1})$, $P^{(D)}:=(w_1,\dots,w_{\tilde{n}-1})$, and $P^{(L)}:=(z_1,\dots,z_{\tilde{n}-1})$ be paths on $\tilde{n}-1$ vertices. Let $T$ be the tree obtained by taking the disjoint union of $P^{(C)}$, $P^{(D)}$ and $P^{(L)}$, then adding the vertex $y$ and the edges $yu_1,yw_1,yz_1$ (see \Cref{fig:Gkn}). Let $\mathcal{W}:=\{W_t:t \in V(T)\}$. We claim that $(T,\mathcal{W})$ is a tree decomposition of $G_{k,n}\boxtimes G_{k,n}$.

	 \begin{figure}[!htb]
		\begin{center}
			\includegraphics[width=0.8\linewidth]{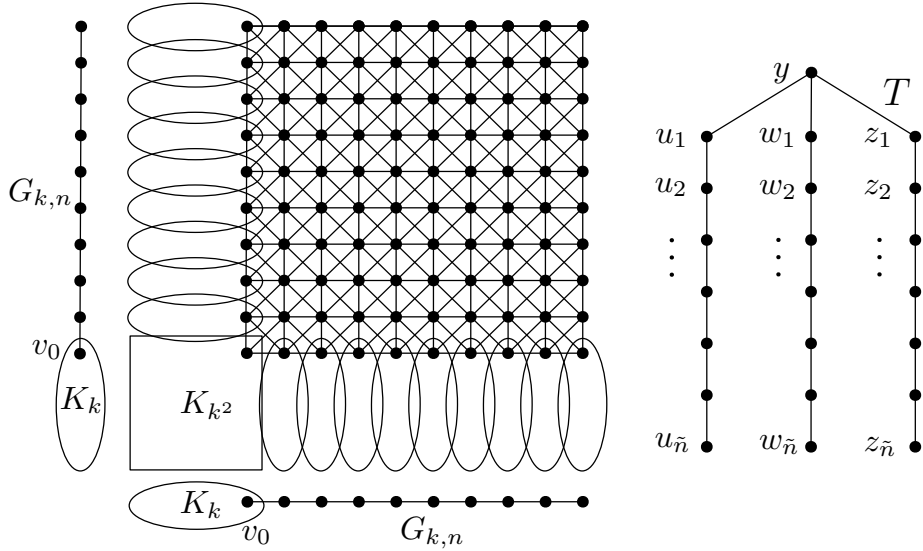}
			\caption{$G_{k,n}\boxtimes G_{k,n}$ with the tree $T$.}
			\label{fig:Gkn}
		\end{center}
	\end{figure}
	
	As noted earlier, every vertex is in a bag. Let $(a_1,a_2)(b_1,b_2)\in E(G_{k,n}\boxtimes G_{k,n})$. Suppose that $a_1$ or $a_2$ is equal to $v_0$. Then $(a_1,a_2)\in X$. Thus, since $(b_1,b_2)\in W_{t'}$ for some $t' \in V(T)$ and $X\sse W_{t}$ for all $t \in V(T)$, we have $(a_1,a_2)(b_1,b_2)\in W_{t'}$. Now suppose that $a_1,a_2\in K_{k+1}\bs \{v_0\}$. Then $N((a_1,a_2))\sse K_{k+1} \boxtimes K_{k+1}$ in which case $(a_1,a_2),(b_1,b_2)\in W_y$. Now if $a_1 \in K_{k+1}\bs \{v_0\}$ and $a_2=v_i$ for some $i \in [\tilde{n}-1]$, then $N((a_1,a_2))\sse C_{u_{i-1}}\cup C_{u_i}$ in which case $(a_1,a_2),(b_1,b_2)\in W_{u_{i'}}$ for $i' \in \{i-1,i\}$. Similarly, if $a_2 \in K_{k+1}\bs \{v_0\}$ and $a_1=v_i$ for some $j \in [\tilde{n}-1]$, then $(a_1,a_2),(b_1,b_2)\in W_{w_{j'}}$ for $j' \in \{j-1,j\}$. Finally, if $a_1=v_i$ and $a_2=v_j$ for some $i,j \in [\tilde{n}-1]$, then $N(a_1,a_2)\sse L_{w_{\ell-1}}\cup L_{w_{\ell}}$ and thus $(a_1,a_2),(b_1,b_2)\in W_{u_{\ell'}}$ for some $\ell' \in [\ell-1,\ell]$ where $\ell:=\min{\{i,j\}}$. As such, every edge is in a bag. 

	It remains to show that for any $(a_1,a_2) \in V(G_{k,n}\boxtimes G_{k,n})$, the subtree $T^{(a_1,a_2)}:=T[t \in V(T):(a_1,a_2)\in W_t]$ is connected. If $(a_1,a_2)\in X$, then $T^{(a_1,a_2)}=V(T)$. If $a_1,a_2 \in V(K_k)\bs\{v_0\}$, then $V(T^{(a_1,a_2)})=\{y\}$. If $a_1\in V(K_k)\bs\{v_0\}$ and $a_2=v_i$ for some $i \in [\tilde{n}-1]$, then $V(T^{(a_1,a_2)})=\{u_{i-1},u_i\}$. If $a_1=v_j$ for some $j \in [\tilde{n}-1]$ and $a_2\in V(K_k)\bs\{v_0\}$, then $V(T^{(a_1,a_2)})=\{w_{j-1},w_j\}$. If $a_1=v_j$ and $a_2=v_i$ for some $i,j \in [\tilde{n}-1]$ then $V(T^{(a_1,a_2)})=\{z_{\ell-1},z_{\ell}\}$ where $\ell:=\min{\{i,j\}}$. In each case, $T^{(a_1,a_2)}$ is connected. Therefore, $(T,\mathcal{W})$ is indeed a tree decomposition of $G_{k,n}\boxtimes G_{k,n}$ with width $O(n+k^2)$ as required.
\end{proof}

We now consider lower bounds for the treewidth of cartesian and strong products. \citet{wood2013treewidth} established the following lower bound for highly-connected graphs.
\begin{thm}[\cite{wood2013treewidth}]\label{twCartProdLower}
	For all $k$-connected graphs $G_1$ and $G_2$ each with at least $n$ vertices,
	$$\tw(G_1\boxtimes G_2)\geq \tw(G_1 \CartProd G_2)\geq k(n-2k+2)-1.$$
\end{thm}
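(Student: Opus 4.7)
The first inequality $\tw(G_1 \boxtimes G_2) \geq \tw(G_1 \CartProd G_2)$ is immediate, since $G_1 \CartProd G_2$ is a spanning subgraph of $G_1 \boxtimes G_2$ and treewidth is monotone under taking subgraphs. So the entire task reduces to showing $\tw(G_1 \CartProd G_2) \geq k(n - 2k + 2) - 1$.

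The plan is to invoke \Cref{BrambleDuality} and exhibit a bramble $\B$ in $G_1 \CartProd G_2$ of order at least $k(n - 2k + 2)$. The template to generalise is the classical cross bramble for the $n \times n$ grid: for each $(p, q) \in [n]^2$ one takes the union of row $p$ with column $q$; these subgraphs pairwise touch and admit no hitting set smaller than $n$. To lift the order from $n$ to $k(n - 2k + 2)$ one must leverage $k$-connectivity on both factors so as to effectively thicken the rows and columns by a factor of $k$.

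Concretely, in each factor $G_i$ I would select a set $P_i \subseteq V(G_i)$ of $n - 2k + 2$ designated \emph{pivot} vertices. Since $G_i$ is $k$-connected, Menger's theorem furnishes, for each pivot, $k$ internally disjoint paths to a reserved pool of $2k - 2$ auxiliary vertices; the slack of $2k - 2$ in the bound $n - 2k + 2$ is precisely what is needed to extract such Menger families while keeping $|P_i| = n - 2k + 2$. For each $(a, b) \in P_1 \times P_2$ and each pair of path indices $(s, t) \in [k] \times [k]$, I would then define a bramble element given by a connected ``thickened cross'' that combines the $s$-th Menger path through $a$ in $G_1$ (crossed with $\{b\}$) together with the $t$-th Menger path through $b$ in $G_2$ (crossed with $\{a\}$). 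Pairwise touching is forced by the shared row--column intersection vertices, so this collection is a valid bramble.

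The main obstacle, and the technical heart of the proof, is the order computation: I would need to show that every hitting set $S \subseteq V(G_1 \CartProd G_2)$ satisfies $|S| \geq k(n - 2k + 2)$. My approach would be by contradiction combined with a Mengerian pigeonhole: if $|S|$ falls short, then projecting $S$ onto $V(G_1)$ and $V(G_2)$ and using $k$-connectivity lets one locate a pivot whose row (or column) still has at least one of its $k$ Menger paths entirely disjoint from $S$, which in turn produces an unhit bramble element of the corresponding index type. Making this projection-and-residual-connectivity argument precise, while ruling out double-counting between the two factors, is where I expect the bulk of the work to lie.
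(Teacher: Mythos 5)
First, a point of reference: the paper does not prove this statement; it is quoted from \citet{wood2013treewidth}, so there is no in-paper argument to measure yours against (the closest relative in the paper is the fibre-counting separator argument of \Cref{SeptwProductNEW}). Your first step is fine: $G_1\CartProd G_2$ is a spanning subgraph of $G_1\boxtimes G_2$ and treewidth is subgraph-monotone, so only the second inequality needs work, and appealing to \Cref{BrambleDuality} is a legitimate strategy. The genuine gap is exactly at the step you assert in passing, namely pairwise touching. The classical cross bramble touches because the cross at $(p,q)$ contains the \emph{entire} row $p$ and the \emph{entire} column $q$, so it meets the cross at $(p',q')$ in the vertex $(p,q')$. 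Once you truncate a row to a Menger path from the pivot to a reserved pool, your element indexed by $(a,b,s,t)$ is $(P^a_s\times\{b\})\cup(\{a\}\times Q^b_t)$, where $P^a_s$ runs from $a$ to the pool in $G_1$ and $Q^b_t$ from $b$ to the pool in $G_2$. For this to touch the element indexed by $(a',b',s',t')$ with $a\neq a'$ and $b\neq b'$ you would need (up to symmetry) $a'\in V(P^a_s)$ together with $b\in V(Q^{b'}_{t'})$, or an edge of $G_1\CartProd G_2$ realising the analogous adjacency. Nothing in $k$-connectivity forces a Menger path from one pivot to the auxiliary pool to pass through, or next to, a different pivot, so for generic choices the two elements neither intersect nor are joined by an edge; the collection is simply not a bramble.

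Beyond that, the order computation --- which carries the entire content of the bound $k(n-2k+2)$ --- is explicitly deferred, and it is not clear it would go through even if touching were repaired: every element with first pivot $a$ meets $\{a\}\times R_2$, where $R_2$ is the pool in $G_2$, so sets of this shape threaten to produce hitting sets of size roughly $(2k-2)(n-2k+2)$ or smaller unless the construction is arranged more carefully, and the claimed ``Mengerian pigeonhole'' is only a hope at this stage. In short, the proposal correctly identifies that $k$-connectivity must thicken the cross bramble by a factor of $k$, but neither of the two properties a bramble must have (pairwise touching, large order) is actually established, and the first fails outright for the construction as described. A more promising route is the one the cited source and \Cref{SeptwProductNEW} take: start from a balanced separator $S$ supplied by a tree-decomposition, classify the copies of $G_1$ and of $G_2$ according to whether $S$ splits them, and use $k$-connectedness to charge at least $k$ vertices of $S$ to each split copy, with the $2k-2$ slack absorbing the exceptional fibres.
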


We now present two new lower bounds. For a graph $G$ and $\epsilon \in [\frac{2}{3},1)$, a partition $(A,S,B)$ of $V(G)$ is an \emph{$\epsilon$-separation} if $1\leq |A|,|B|\leq \epsilon |V(G)|$ and there is no edge between $A$ and $B$. The \emph{order} of $(A,S,B)$ is $|S|$. \citet{robertson1986algorithmic} showed that graphs with small treewidth have separations with small order.
\begin{lem}[\cite{robertson1986algorithmic}]\label{separatortw}
	For every $\epsilon \in [\frac{2}{3},1)$, every graph with treewidth $k$ has an $\epsilon$-separation of order at most $k+1$.
\end{lem}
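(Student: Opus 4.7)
The plan is to apply the classical ``balanced bag'' argument to an optimal tree-decomposition of $G$. Fix a tree-decomposition $(T,\B)$ of $G$ of width $k$, so $|B_t|\le k+1$ for every $t\in V(T)$, and write $n:=\textsf{v}(G)$. For each edge $uv\in E(T)$, removing $uv$ splits $T$ into subtrees $T_u\ni u$ and $T_v\ni v$; set $W_u:=\bigcup_{t\in V(T_u)} B_t$ and $W_v:=\bigcup_{t\in V(T_v)} B_t$. Standard properties of tree-decompositions yield $W_u\cap W_v=B_u\cap B_v$ and no edge of $G$ has one endpoint in $W_u\setminus W_v$ and the other in $W_v\setminus W_u$. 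Orient the edge $uv$ toward whichever of $W_u,W_v$ contains more vertices of $G$, breaking ties arbitrarily.

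Because $T$ is a tree, this orientation is acyclic, so there exists a sink $t^\star\in V(T)$ into which every incident edge points. Set $S:=B_{t^\star}$, so $|S|\le k+1$. The subtrees of $T-t^\star$ partition $V(G)\setminus S$ into sets $U_1,\dots,U_d$, one for each neighbour $u_1,\dots,u_d$ of $t^\star$. For each $i$, the orientation of the edge $u_it^\star$ together with $W_{u_i}\cap W_{t^\star}\sse S$ forces $|U_i|\le n/2$. Since no edge of $G$ joins distinct $U_i$'s, every connected component of $G-S$ lies in a single $U_i$, and in particular has at most $n/2$ vertices.

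It remains to group the components of $G-S$ into two non-empty sides $A$ and $B$ with $|A|,|B|\le \epsilon n$. Let $c_1\ge c_2\ge\dots\ge c_r$ be the component sizes; by the previous paragraph each $c_i\le n/2$, and $\sum_i c_i=m:=n-|S|\le n$. If $c_1\ge m-\tfrac{2n}{3}$, let $A$ consist of the largest component alone and $B$ of all remaining ones: then $|A|=c_1\le n/2\le\epsilon n$ and $|B|=m-c_1\le\tfrac{2n}{3}\le\epsilon n$. Otherwise $c_1<m-\tfrac{2n}{3}\le\tfrac{n}{3}$, so \emph{every} component has size less than $n/3$; the greedy LPT allocation (process components in decreasing order of size, always assigning the next to the currently lighter side) then yields $\bigl||A|-|B|\bigr|\le c_1<\tfrac{n}{3}$, whence $|A|,|B|\le\tfrac{m+c_1}{2}<\tfrac{2n}{3}\le\epsilon n$. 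In either case $(A,S,B)$ is the desired $\epsilon$-separation, the condition $r\ge 2$ holding whenever $n>2k+2$ (the only regime in which the statement is not vacuous). The main technical point is the short case analysis in the packing step; everything else is routine tree-decomposition bookkeeping, with the acyclicity of the edge-orientation on $T$ supplying the existence of the balanced bag $B_{t^\star}$.
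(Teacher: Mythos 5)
The paper offers no proof of this lemma at all: it is imported verbatim from Robertson and Seymour via the citation, so there is nothing in-paper to compare your argument against. Your proof is the standard balanced-bag argument and it is correct. The orientation of $E(T)$ toward the heavier side is acyclic, the sink $t^\star$ gives $S=B_{t^\star}$ with $|S|\le k+1$, the computation $|U_i|\le\tfrac12\bigl(n+|B_{u_i}\cap B_{t^\star}|\bigr)-|B_{u_i}\cap B_{t^\star}|\le n/2$ is right, and the separator property of tree-decompositions confines each component of $G-S$ to one $U_i$. The two-case packing is also sound: in the first case $|B|=m-c_1\le 2n/3$ by the case hypothesis and $|A|=c_1\le n/2$, in the second case every component has size below $n/3$ and the greedy imbalance bound $\bigl||A|-|B|\bigr|\le c_1$ (which, as a small aside, holds for any processing order and does not need the LPT ordering) gives $|A|,|B|<2n/3$; both branches use $\epsilon\ge 2/3$ exactly where it is needed. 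The one substantive remark concerns the regime you flag at the end: under the paper's definition an $\epsilon$-separation must have $|A|,|B|\ge 1$, and with that convention the lemma as literally stated is not merely vacuous but false for small graphs (a complete graph admits no $\epsilon$-separation of any order), so your restriction to $n>2k+2$ is the honest reading of the statement rather than a gap in your proof; it is harmless for the later applications, where the graphs involved are large.
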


\begin{lem}\label{SeptwProductNEW}
	For all $\epsilon,\beta \in [\frac{2}{3},1)$ where $\beta >\epsilon$ and integers $k,n,m\geq 1$ where $m\geq kn$ and $n \geq \frac{1}{1-\beta}$ and for all connected graphs $G$ and $H$ with $m$ and $n$ vertices respectively such that every $\beta$-separation of $G$ has order at least $k$, every $\epsilon$-separation of $G \CartProd H$ has order at least $(1-\frac{\epsilon }{\beta}) kn$.
\end{lem}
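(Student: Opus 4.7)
The plan is to fix an $\epsilon$-separation $(A,S,B)$ of $G \CartProd H$ and analyse its restriction to each layer $V(G) \times \{h\}$. For each $h \in V(H)$, set $A_h := \{g \in V(G) : (g,h) \in A\}$, and likewise define $S_h, B_h$; since an edge of $G \CartProd H$ inside such a layer corresponds to an edge of $G$, the partition $(A_h,S_h,B_h)$ of $V(G)$ has no $G$-edges between $A_h$ and $B_h$. I would classify $V(H)$ into $U_A := \{h : |A_h| > \beta m\}$, $U_B := \{h : |B_h| > \beta m\}$ (disjoint because $\beta > 1/2$), and $U_0 := V(H) \setminus (U_A \cup U_B)$.

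The first step is to show that $|S_h| \geq k$ for every $h \in U_0$: if both $A_h$ and $B_h$ are nonempty, then $(A_h, S_h, B_h)$ is a $\beta$-separation of $G$ and the hypothesis gives $|S_h| \geq k$; otherwise one of $A_h, B_h$ is empty and $|S_h| \geq (1-\beta)m \geq k$, using $m \geq kn$ together with $n \geq 1/(1-\beta)$. Hence $|S| \geq k|U_0|$, and the whole conclusion reduces to proving $|U_A| + |U_B| \leq (\epsilon/\beta)n$. The easy half of this is when one of $U_A, U_B$ is empty: if, say, $U_A = \emptyset$, then $\beta m |U_B| \leq |B| \leq \epsilon m n$ immediately gives $|U_B| \leq \epsilon n/\beta$.

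The main obstacle is the remaining case where both $U_A$ and $U_B$ are nonempty, since applying the inequality above to $U_A$ and $U_B$ separately loses a factor of two and would only yield $|U_A| + |U_B| \leq 2\epsilon n/\beta$. My plan is to pivot from ``layer'' analysis to ``row'' analysis (copies of $H$ inside $G \CartProd H$) in order to bring in the connectivity of $H$. Pick any $h_A \in U_A$ and $h_B \in U_B$; since $|A_{h_A}|, |B_{h_B}| > \beta m$ are subsets of $V(G)$, inclusion-exclusion yields $|A_{h_A} \cap B_{h_B}| > (2\beta - 1)m$. For each $g$ in this intersection, the copy of $H$ given by $\{g\} \times V(H)$ meets both $A$ (at $(g,h_A)$) and $B$ (at $(g,h_B)$); since there are no $A$-$B$ edges in $G \CartProd H$ and $H$ is connected, the set $\{h : (g,h) \in S\}$ must be a nonempty $H$-separator contributing at least one vertex to $S$. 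Summing over all such $g$ gives $|S| > (2\beta - 1)m \geq (2\beta - 1)kn$.

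To finish, I would verify that $(2\beta - 1) \geq 1 - \epsilon/\beta$, which rearranges to $\epsilon \geq 2\beta(1-\beta)$. On $\beta \in [2/3, 1)$ the function $2\beta(1-\beta)$ is decreasing and attains its maximum $4/9$ at $\beta = 2/3$, while $\epsilon \geq 2/3 > 4/9$, so the required inequality holds comfortably. Combined with the easy one-sided case this yields $|S| \geq (1 - \epsilon/\beta)kn$ in every case. The real subtlety of the argument is the realisation that the two-nonempty-$U$ case cannot be beaten purely by $G$-layer counting; switching to the $H$-rows is what unlocks the connectivity of $H$ and closes the factor-of-two gap.
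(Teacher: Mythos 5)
Your proposal is correct and follows essentially the same route as the paper's proof: classify the $G$-layers (copies of $G$ indexed by $V(H)$) according to whether they are dominated by $A$, by $B$, or neither, use the $\beta$-separation hypothesis (or the bound $(1-\beta)m\geq k$) to extract $k$ vertices of $S$ from each balanced layer, and handle the case of layers dominated by both sides via the $(2\beta-1)m$ disjoint connected copies of $H$ that must each meet $S$. The only difference is cosmetic bookkeeping (your explicit verification that $2\beta-1\geq 1-\epsilon/\beta$ is a nice touch the paper leaves implicit).
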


\begin{proof}
	Let $V(H):=[n]$. Let $(A,S,B)$ be a $\epsilon$-separation of $G \CartProd H$. Our goal is to show that $|S|\geq (1-\frac{\epsilon }{\beta}) kn$. For $i \in [n]$, let $G^{(i)}$ be the copy of $G$ in $G \CartProd H$ induced by $\{(v,i):v\in V(G)\}$. We say that $G^{(i)}$ \emph{belongs} to $A$ if $|A\cap V(G^{(i)})|\geq \beta m$, and $G^{(i)}$ \emph{belongs} to $B$ if $|B\cap V(G^{(i})|\geq \beta m$.
	
	Suppose that some copy $G^{(i)}$ belongs to $A$ and some copy $G^{(j)}$ belongs to $B$. Let $X:=\{v\in V(G):(v,i)\in A, (v,j)\in B\}$. Thus $|X|\geq (2\beta-1) m>0,$ which implies $i \neq j$. Since $H$ is connected, for each $x \in X$ there is a path from $(x,i)$ to $(x,j)$ contained within the subgraph of $G \CartProd H$ induced by $\{(x,\ell):\ell \in V(H)\}$. Since these paths are pairwise disjoint and each path contains a vertex from $S$, we have $|S|\geq |X|\geq (2\beta-1)kn\geq (1-\frac{\epsilon }{\beta}) kn$.
	
	Now assume, without loss of generality, that no copy of $G$ belongs to $B$. Say $t$ copies of $G$ belong to $A$. Then $\beta tm \leq |A|\leq \epsilon nm$, implying that $t \leq \frac{\epsilon n}{\beta}$. Thus, at least $(1-\frac{\epsilon}{\beta})n$ copies of $G$ belong to neither $A$ nor $B$. Now consider such a copy $G^{(i)}$. If $G^{(i)}$ is contained in $S\cup A$, then 
	$|S\cap V(G^{(i)})|\geq (1-\beta)m\geq k.$
	Similarly, if $G^{(i)}$ is contained in $S\cup B$, then $|S\cap V(G^{(i)})|\geq k$. Otherwise, $G^{(i)}$ contains vertices in both $A$ and $B$, in which case $(A\cap V(G^{(i)}),S\cap V(G^{(i)}),B\cap V(G^{(i)}))$ is a $\beta$-separation of $G^{(i)}$ and thus, $|S\cap V(G^{(i)})|\geq k$. Therefore,
	$$|S|=\sum_{i=1}^n|V(G^{(i)})\cap S|\geq (1-\tfrac{\epsilon }{\beta}) kn$$
	as required.
\end{proof}
\Cref{separatortw,SeptwProductNEW} imply the following.

\begin{thm}\label{SeparatorstwProduct}
		For all $\epsilon,\beta \in [\frac{2}{3},1)$ where $\beta >\epsilon$ and integers $k,n,m\geq 1$ where $m\geq kn$ and $n \geq \frac{1}{1-\beta}$ and for all connected graphs $G$ and $H$ with $m$ and $n$ vertices respectively such that every $\beta$-separation of $G$ has order at least $k$, 
			$$\tw(G \CartProd H)\geq (1-\tfrac{\epsilon }{\beta})kn-1.$$
\end{thm}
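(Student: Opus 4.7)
The plan is to derive the theorem as an immediate consequence of the two preceding lemmas. Set $t := \tw(G \CartProd H)$. Since $\epsilon \in [\tfrac{2}{3},1)$, \Cref{separatortw} applied to $G \CartProd H$ produces an $\epsilon$-separation $(A,S,B)$ of $G \CartProd H$ of order $|S| \le t+1$. On the other hand, the hypotheses of \Cref{SeptwProductNEW}---namely $\beta > \epsilon \ge \tfrac{2}{3}$, $m \ge kn$, $n \ge \tfrac{1}{1-\beta}$, $G$ and $H$ connected with $m$ and $n$ vertices, and every $\beta$-separation of $G$ having order at least $k$---are precisely the hypotheses of the theorem, so \Cref{SeptwProductNEW} guarantees that every $\epsilon$-separation of $G \CartProd H$ (and in particular the one above) has order at least $(1-\tfrac{\epsilon}{\beta})kn$.

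Chaining these two inequalities gives
\begin{equation*}
    t+1 \;\ge\; |S| \;\ge\; \Bigl(1-\tfrac{\epsilon}{\beta}\Bigr)kn,
\end{equation*}
and rearranging yields the claimed bound $\tw(G \CartProd H) \ge (1-\tfrac{\epsilon}{\beta})kn - 1$. There is essentially no obstacle here: all of the combinatorial work sits inside \Cref{SeptwProductNEW}, and this theorem merely converts the separator lower bound into a treewidth lower bound via the standard duality of \Cref{separatortw}. The only point to verify is that the parameter $\epsilon$ chosen when invoking \Cref{separatortw} is the same $\epsilon$ appearing in the statement of \Cref{SeptwProductNEW}, which is immediate from the formulation.
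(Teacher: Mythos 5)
Your proposal is correct and is exactly the argument the paper intends: the paper derives \Cref{SeparatorstwProduct} by combining \Cref{separatortw} (which yields an $\epsilon$-separation of $G \CartProd H$ of order at most $\tw(G\CartProd H)+1$) with \Cref{SeptwProductNEW} (which forces every such separation to have order at least $(1-\tfrac{\epsilon}{\beta})kn$), precisely as you do. No gaps; nothing to add.
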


 By applying \Cref{SeptwProductNEW,twStrongProduct}, we determine the treewidth of $d$-dimensional grid graphs up to a constant factor.

\begin{cor}
	For fixed $d\geq 2$ and all integers $n_1\geq \dots \geq n_d\geq 1$, 
	\begin{equation*}
			\tw(P_{n_1}\CartProd\dots \CartProd P_{n_d})=\Theta \left( \prod_{j=2}^{d}n_j \right)
		\quad\text{and}\quad 
			\tw(P_{n_1}\boxtimes\dots \boxtimes P_{n_d})=\Theta \left(\prod_{j=2}^{d}n_j\right).
	\end{equation*}
\end{cor}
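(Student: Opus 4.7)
The upper bounds follow from a single application of \Cref{twStrongProduct}. Setting $G_1 := P_{n_1}$ (so $\tw(G_1) = 1$) and $G_2 := P_{n_2} \boxtimes \dots \boxtimes P_{n_d}$ (so $\textsf{v}(G_2) = \prod_{j=2}^d n_j$), the lemma yields $\tw(P_{n_1} \boxtimes \dots \boxtimes P_{n_d}) \leq 2 \prod_{j=2}^d n_j - 1$. Since the cartesian product is a spanning subgraph of the strong product, the same upper bound applies to $\tw(P_{n_1} \CartProd \dots \CartProd P_{n_d})$.

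For the matching lower bounds, it suffices to prove that $\tw(P_{n_1} \CartProd \dots \CartProd P_{n_d}) = \Omega(\prod_{j=2}^d n_j)$, since the cartesian product sits inside the strong product. I plan to establish this by induction on $d$, using the following stronger statement: fix any strictly decreasing sequence $(\epsilon_i)_{i \geq 1}$ in $[2/3, 1)$ (for instance, $\epsilon_i := 2/3 + 1/(3i+3)$). Then for every $d \geq 1$ there are positive constants $c_d, N_d$ such that whenever $n_1 \geq \dots \geq n_d \geq N_d$, every $\epsilon_d$-separation of $G^{(d)} := P_{n_1} \CartProd \dots \CartProd P_{n_d}$ has order at least $c_d \prod_{j=2}^d n_j$.

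The base case $d = 1$ is immediate with $c_1 = 1$, since every $\epsilon_1$-separation of a connected graph has order at least $1$. For the inductive step, write $G^{(d)} = G^{(d-1)} \CartProd P_{n_d}$ and apply \Cref{SeptwProductNEW} with $\beta := \epsilon_{d-1}$, $\epsilon := \epsilon_d$ (so $\beta > \epsilon$), and $k := \lfloor c_{d-1} \prod_{j=2}^{d-1} n_j \rfloor$. The condition $m \geq kn$ reduces to $n_1 \geq c_{d-1} n_d$, which holds since $c_{d-1} \leq 1$ and $n_1 \geq n_d$, while $n_d \geq 1/(1-\epsilon_{d-1})$ is enforced by taking $N_d$ large enough. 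The conclusion of the lemma gives every $\epsilon_d$-separation of $G^{(d)}$ order at least $(1-\epsilon_d/\epsilon_{d-1}) c_{d-1} \prod_{j=2}^d n_j$, completing the induction with $c_d := (1-\epsilon_d/\epsilon_{d-1}) c_{d-1} > 0$. Invoking \Cref{separatortw} then yields $\tw(G^{(d)}) \geq c_d \prod_{j=2}^d n_j - 1$.

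The main obstacle is orchestrating the decreasing sequence $(\epsilon_i)$ so that the strict inequality $\beta > \epsilon \geq 2/3$ required by \Cref{SeptwProductNEW} is maintained at each induction step, while the hypothesis on $G$ comes from the inductive claim for $\epsilon_{d-1}$ rather than for $\epsilon_d$. The constants $c_d$ shrink with $d$ but remain positive for every fixed $d$, which is precisely what the asymptotic $\Theta$ bound requires. Edge cases where some $n_j$ is smaller than $N_d$ are handled directly, since then $\prod_{j=2}^d n_j$ is bounded in terms of $d$ alone and both sides of the asymptotic equation are $O(1)$.
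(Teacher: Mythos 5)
Your proof follows essentially the same route as the paper: the upper bound via \Cref{twStrongProduct}, and the lower bound by inducting on $d$ through \Cref{SeptwProductNEW} with a strengthened separation hypothesis, finishing with \Cref{separatortw}; your device of a fixed decreasing sequence $(\epsilon_i)$ is just a concrete instantiation of the paper's ``for all $\epsilon\in[\frac23,1)$ there is a constant $c(d,\epsilon)$'' induction hypothesis, and your one-shot upper bound (taking $G_2$ to be the product of the last $d-1$ paths) is an immaterial variant of the paper's inductive one. The main argument is sound.

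One remark at the end is wrong as stated: if some $n_{j}<N_d$ it does \emph{not} follow that $\prod_{j=2}^d n_j$ is bounded in terms of $d$ alone (take $d=3$, $n_1=n_2$ large, $n_3=1$). The repair is easy — let $j_0$ be the first index with $n_{j_0}<N_d$; then $P_{n_1}\CartProd\dots\CartProd P_{n_{j_0-1}}$ is a subgraph of the full product and $\prod_{j=2}^{j_0-1}n_j\geq N_d^{-(d-j_0+1)}\prod_{j=2}^d n_j$, so the induction at level $j_0-1$ already gives the required $\Omega(\prod_{j=2}^d n_j)$ for fixed $d$ — but as written the edge-case claim does not hold.
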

\begin{proof}[Proof Sketch]
	We proceed by induction on $d\geq 2$. For the upper bound, apply \Cref{twStrongProduct} by setting $G_1:=P_{n_1}\boxtimes\dots \boxtimes P_{n_{d-1}}$ and $G_2:=P_{n_d}$ with the induction hypothesis that $\tw(P_{n_1}\boxtimes\dots \boxtimes P_{n_d})\leq \prod_{j=2}^{d}n_j$ for all $d \geq 2$. For the lower bound, apply \Cref{SeptwProductNEW} by setting $G:=P_{n_1}\CartProd\dots \CartProd P_{n_{d-1}}$ and $H:=P_{n_d}$ with the induction hypothesis that for all $d \geq 2$ and $\epsilon \in [\frac{2}{3},1)$, there is a constant $c(d,\epsilon)$ such that for all sufficiently large $n_d$ (as a function of $d$ and $\epsilon)$, every $\epsilon$-separation in $P_{n_1}\CartProd\dots \CartProd P_{n_d}$ has order at least $c(d,\epsilon)\prod_{j=2}^{d}n_j$. The lower bound then follows by \Cref{separatortw}.
\end{proof}

This result demonstrates that $d$-dimensional grids are a family of graphs for which \Cref{SeparatorstwProduct} is tight up to a constant factor, whereas the lower bound given by \Cref{twCartProdLower} is not of the correct order for this family.

We now present another lower bound in terms of the Hadwiger number. For a graph $G$, the \emph{Hadwiger number} of $G$, $\eta(G)$, is the maximum integer $t$ such that $K_t$ is a minor of $G$. 
\begin{thm}\label{twHadwiger}
	For all graphs $G$ and $H$,
	$$\tw(G\boxtimes H)\geq \eta(H)(\tw(G)+1)-1.$$
\end{thm}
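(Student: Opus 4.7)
My plan is to apply the Treewidth Duality Theorem (\Cref{BrambleDuality}): it suffices to exhibit a bramble in $G \boxtimes H$ of order at least $\eta(H)(\tw(G)+1)$. Let $t := \eta(H)$ and $w := \tw(G)$. By \Cref{BrambleDuality}, $G$ contains a bramble $\mathcal{B}_G$ of order $w+1$. By definition of Hadwiger number, $H$ contains a model of $K_t$, that is, pairwise disjoint connected subgraphs $V_1,\dots,V_t \subseteq V(H)$ such that for all distinct $i,j \in [t]$ there is an edge between $V_i$ and $V_j$ in $H$.

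The bramble I propose for $G \boxtimes H$ is
\begin{equation*}
\mathcal{B} := \{\, B \times V_i \,:\, B \in \mathcal{B}_G,\ i \in [t]\,\},
\end{equation*}
viewed as vertex subsets of $G \boxtimes H$. Each set $B \times V_i$ induces a connected subgraph of $G \boxtimes H$ because the cartesian edges alone already make the product of a connected subgraph of $G$ with a connected subgraph of $H$ connected, and $G \CartProd H \subseteq G \boxtimes H$. Next I verify pairwise touching for $B \times V_i$ and $B' \times V_j$. If $i = j$, then since $B$ and $B'$ touch in $G$, either they share a vertex $u$ (giving $(u, x) \in (B \times V_i) \cap (B' \times V_i)$ for any $x \in V_i$), or there is an edge $uu' \in E(G)$ with $u \in B$, $u' \in B'$ (giving the cartesian-type edge $(u, x)(u', x)$ in $G \boxtimes H$). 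If $i \neq j$, pick any $u \in B$, $u' \in B'$ that witness the touching of $B$ and $B'$, and an edge $xy \in E(H)$ with $x \in V_i$, $y \in V_j$; then $(u,x)$ and $(u',y)$ are either connected by a cartesian edge (when $u = u'$, using $xy \in E(H)$) or by a diagonal edge (when $uu' \in E(G)$, using both $uu' \in E(G)$ and $xy \in E(H)$).

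To bound the order of $\mathcal{B}$, let $S \subseteq V(G \boxtimes H)$ be a hitting set. For each $i \in [t]$, define the projection
\begin{equation*}
S_i := \{\, v \in V(G) \,:\, (v, h) \in S \text{ for some } h \in V_i\,\}.
\end{equation*}
For each $B \in \mathcal{B}_G$, $S$ hits $B \times V_i$, so some $(v,h) \in S$ satisfies $v \in B$ and $h \in V_i$; hence $v \in S_i \cap B$, proving that $S_i$ is a hitting set for $\mathcal{B}_G$. Therefore $|S_i| \geq w+1$. Since $V_1,\dots,V_t$ are pairwise disjoint, the sets $\{(v,h) \in S : h \in V_i\}$ are pairwise disjoint subsets of $S$, each of size at least $|S_i|$, so
\begin{equation*}
|S| \geq \sum_{i=1}^{t} |S_i| \geq t(w+1) = \eta(H)(\tw(G)+1).
\end{equation*}
Applying \Cref{BrambleDuality} yields $\tw(G \boxtimes H) \geq \eta(H)(\tw(G)+1) - 1$.

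The only real subtlety is the case $i \neq j$ of the touching verification, where both $B, B'$ are allowed merely to touch (not intersect) and simultaneously $V_i, V_j$ are disjoint. The strong product is exactly what is needed there: the diagonal edges let a $G$-adjacency combine with an $H$-adjacency in a single edge of $G \boxtimes H$. This is also why the same argument does not give the analogous bound for the cartesian product.
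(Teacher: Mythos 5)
Your proposal is correct and follows essentially the same route as the paper's proof: both build the bramble $\{B\times V_i : B\in\mathcal{B}_G,\ i\in[t]\}$ from a bramble of $G$ and a $K_t$-model in $H$, verify touching via the diagonal edges of the strong product, and bound the order of any hitting set by partitioning it according to the branch sets $V_i$ and projecting each part onto $G$. No gaps.
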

\begin{proof}
	Let $(B_i)_{i \in V(K_t)}$ be a model of $K_t$ in $H$ where $t:=\eta(H)$. By \Cref{BrambleDuality}, there is a bramble $\mathcal{B}$ in $G$ of such that every hitting set of $\mathcal{B}$ has order at least $\tw(G)+1$. For $X \in \mathcal{B}$ and $i \in [t]$, let $(X,i):=\{(v,u)\in V(G\boxtimes H):v \in X,u\in B_i\}$. Let $G_{X,i}$ denote the subgraph of $G \boxtimes H$ that is induced by $(X,i)$. Since $G[X]$ and $H[B_i]$ are connected, $G_{X,i}$ is connected. For $i\in [t]$, let $\mathcal{B}_i:=\{(X,B_i):X \in \mathcal{B}\}$. Then $\mathcal{B}_i$ is a bramble for $G \boxtimes (H[B_i])$.
	
	Let $\mathcal{C}:=\{(X,B_i):X \in \mathcal{B}, i \in [t]\}$. Consider $(X,B_i),(Y,B_j)\in \mathcal{C}$. Since $X$ and $Y$ touch in $G$, for some vertices $v \in X$ and $w \in Y$, either $v=w$ or $vw \in E(G)$. Moreover, there exists $u_i \in B_i$ and $u_j \in B_j$ such that $u_i=u_j$ (if $i=j)$ or $u_iu_j\in E(H)$. Thus, in $G \boxtimes H$, the vertices $(v,u_i)$ and $(w,u_j)$ are adjacent or equal. Since $(v,u_i)\in (X,B_i)$ and $(v,u_j)\in (Y,B_j)$, the sets $(X,B_i)$ and $(Y,B_j)$ touch. Hence $\mathcal{C}$ is a bramble in $G \boxtimes H$. Let $J$ be a hitting set for $\mathcal{C}$. For each $i \in [t]$, let $J_i:=\{(v,u) \in J:u \in B_i\}$. Thus $J_i$ is a hitting set for the bramble $\mathcal{B}_i$ (in $G \boxtimes B_i)$. By \Cref{BrambleDuality}, $|J_i|\geq \tw(G)+1$. Since the $J_i$'s are pairwise disjoint, $|J|\geq t(\tw(G)+1)$. Hence $\tw(G \boxtimes H)\geq t(\tw(G)+1)-1$ by \Cref{BrambleDuality}.
\end{proof}

\Cref{twHadwiger,twStrongProduct} together imply that for every graph $G$, 
$$\tw(G\boxtimes K_n)=(\tw(G)+1)n-1.$$

The next two theorems characterise when the cartesian and strong products have bounded treewidth and when they have bounded pathwidth. 

\begin{thm}\label{twCartStrongProd}
	The following are equivalent for monotone graph classes $\G_1$ and $\G_2$:
	\begin{compactenum}
		\item $\G_1 \boxtimes \G_2$ has bounded treewidth;
		\item $\G_1 \CartProd \G_2$ has bounded treewidth;
		\item both $\G_1$ and $\G_2$ have bounded treewidth and $\tilde{\textsf{v}}(\G_1)$ or $\tilde{\textsf{v}}(\G_2)$ is bounded; and
		\item $\G_1 \boxtimes \G_2$ has bounded Hadwiger number;
	\end{compactenum} 
\end{thm}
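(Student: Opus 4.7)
My plan is to close the cycle $(3) \Rightarrow (1) \Rightarrow (2) \Rightarrow (3)$ and additionally establish $(1) \Rightarrow (4) \Rightarrow (3)$, which together yield the equivalence.

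For $(3) \Rightarrow (1)$, I apply \Cref{twStrongProduct} componentwise: if $\tilde{\textsf{v}}(\G_2) \leq n$ and $\tw(\G_i) \leq k$, then every $G_2 \in \G_2$ decomposes into connected components $C$ of order at most $n$, and $\tw(G_1 \boxtimes G_2) = \max_C \tw(G_1 \boxtimes C) \leq (k+1)n - 1$. The implications $(1) \Rightarrow (2)$ and $(1) \Rightarrow (4)$ are immediate from $G_1 \CartProd G_2 \subseteq G_1 \boxtimes G_2$ and the general bound $\eta(G) \leq \tw(G) + 1$.

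For $(2) \Rightarrow (3)$, boundedness of $\tw(\G_i)$ is immediate from the subgraph relation $G_i \subseteq G_1 \CartProd G_2$. To force the $\tilde{\textsf{v}}$ condition, I argue by contradiction: if both $\tilde{\textsf{v}}(\G_i)$ are unbounded, then for every $n$ I pick connected $C_i \in \G_i$ of order at least $n$; their spanning trees $T_i$ lie in $\G_i$ by monotonicity. Since trees on at least two vertices are $1$-connected, \Cref{twCartProdLower} with $k = 1$ gives $\tw(T_1 \CartProd T_2) \geq n - 1$, contradicting the assumed bound on $\tw(\G_1 \CartProd \G_2)$.

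The challenging direction is $(4) \Rightarrow (3)$. Assume $\eta(\G_1 \boxtimes \G_2) \leq t$. To bound $\tw(\G_1)$: if $\tw(\G_1)$ were unbounded, the Robertson--Seymour--Thomas grid-minor theorem would produce $G \in \G_1$ with arbitrarily large grid minors, and taking the strong product with $K_2 \in \G_2$ (available by monotonicity whenever $\G_2$ contains an edge) yields $K_m$ as a minor of $G \boxtimes K_2$ via branch sets $B_i = (\text{row } i \text{ in layer } 0) \cup (\text{column } i \text{ in layer } 1)$: each $B_i$ is connected through the layer-switching edge at grid position $(i,i)$, the $B_i$ are pairwise disjoint, and $B_i$ meets $B_{i'}$ through the adjacent pair $((i,i'),0) \in B_i$ and $((i,i'),1) \in B_{i'}$, contradicting the Hadwiger bound for $m > t$. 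The same argument bounds $\tw(\G_2)$. For the $\tilde{\textsf{v}}$ condition, if both $\tilde{\textsf{v}}(\G_i)$ are unbounded, spanning-tree monotonicity gives arbitrarily large trees $T_i \in \G_i$; the folklore dichotomy that every $n$-vertex tree contains either $P_m$ or $K_{1,m}$ with $m \geq \log n / \log\log n$ (from the pigeonhole comparison of radius and maximum degree), combined with explicit clique-minor constructions in each of $P_m \boxtimes P_m$, in $K_{1,m} \boxtimes K_{1,m}$ (which contains $K_{m,m}$ on its off-centre vertices, hence $\eta \geq m + 1$), and in the mixed case $P_m \boxtimes K_{1,m}$, produces a $K_{\Omega(m)}$ minor in $T_1 \boxtimes T_2$, contradicting $(4)$.

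The main obstacle is the tree sub-case of $(4) \Rightarrow (3)$: verifying that the strong product of arbitrary $n$-vertex trees has Hadwiger number tending to infinity with $n$, via the tree dichotomy and case-specific clique-minor constructions. The grid-minor step is standard but relies on the other class containing an edge; the degenerate case in which one class is edgeless reduces to disjoint unions and must be handled separately.
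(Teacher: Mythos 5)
Your cycle $(3)\Rightarrow(1)\Rightarrow(2)\Rightarrow(3)$ is correct and coincides with the paper's argument (same use of \Cref{twStrongProduct} and \Cref{twCartProdLower}). Where you genuinely diverge is statement (4): the paper does not prove $(3)\Leftrightarrow(4)$ itself but cites \citet{pecaninovic2019complete} for that equivalence, whereas you attempt a self-contained proof of $(4)\Rightarrow(3)$ via the grid-minor theorem and a path-or-star dichotomy for spanning trees. Your skeleton is sound and the constructions you do give check out: the branch sets consisting of row $i$ in layer $0$ together with column $i$ in layer $1$ do yield a $K_m$ minor in $(\text{$m\times m$ grid})\boxtimes K_2$, and $K_{1,m}\boxtimes K_{1,m}$ does contain $K_{m,m}$. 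The crux you leave unproved, namely that $\eta(P_n\boxtimes P_n)\to\infty$, is in fact true; here is the missing construction. In $P_n\boxtimes P_n$ each diagonal $A_k=\{(x,y):y-x=k\}$ and each antidiagonal $D_\ell=\{(x,y):x+y=\ell\}$ induces a path; for $k$ even and $\ell$ odd they are disjoint, yet the points $(\tfrac{\ell-k-1}{2},\tfrac{\ell+k-1}{2})\in A_k$ and $(\tfrac{\ell-k-1}{2},\tfrac{\ell+k+1}{2})\in D_\ell$ are adjacent. Pairing $\Omega(n)$ even diagonals with $\Omega(n)$ odd antidiagonals (with indices chosen so that all these near-crossings lie on the board) and taking $B_i:=A_{k_i}\cup D_{\ell_i}$ gives pairwise disjoint, connected, pairwise touching sets, hence a $K_{\Omega(n)}$ minor. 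The mixed case is also easy: in $P_m\boxtimes K_{1,m}$ with path vertices $v_1,\dots,v_m$, centre $c$ and leaves $l_1,\dots,l_m$, the sets $B_j:=\{(v_k,l_j):k\in[m]\}\cup\{(v_j,c)\}$ form a $K_m$ minor. So the ``main obstacle'' you identify is real but surmountable.

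The one genuine problem is the degenerate case you propose to ``handle separately'': it cannot be handled, because $(4)\Rightarrow(1)$ is false there. Take $\G_1$ to be the (monotone) class of planar graphs and $\G_2$ the class of edgeless graphs: every $G_1\boxtimes G_2$ is a disjoint union of planar graphs, so $(4)$ holds with $\eta\leq 4$, while $(1)$, $(2)$ and $(3)$ all fail because $\tw(\G_1)$ is unbounded. Thus $(4)$ is equivalent to the other three statements only under an additional non-degeneracy hypothesis, e.g.\ that both classes contain $K_2$ --- an assumption the paper makes explicit in \Cref{twDirectProductGeneralGraphs} but omits here, and which is presumably implicit in the cited result. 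With that hypothesis your grid-minor step is legitimate and your proof of $(4)\Rightarrow(3)$ goes through; without it, no separate treatment of the edgeless case can succeed. You should add the hypothesis rather than present that case as routinely dispatchable.
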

\begin{proof}
	Building on the work of \citet{wood2011clique}, \citet{pecaninovic2019complete} showed that $(3)$ and $(4)$ are equivalent. Since treewidth is closed under subgraphs, $(1)$ implies $(2)$. By \Cref{twStrongProduct}, $(3)$ implies $(1)$. To show that $(2)$ implies $(3)$, suppose that $\G_1 \CartProd \G_2$ has bounded treewidth. If either $\G_1$ or $\G_2$ have unbounded treewidth, then $\G_1 \CartProd \G_2$ has unbounded treewidth, a contradiction. If the connected graphs in both $\G_1$ and $\G_2$ have unbounded order, then by \Cref{twCartProdLower}, $\G_1 \CartProd \G_2$ has unbounded treewidth, a contradiction. As such, both $\G_1$ and $\G_2$ have bounded treewidth and $\tilde{\textsf{v}}(\G_1)$ or $\tilde{\textsf{v}}(\G_2)$ is bounded.
\end{proof}

We omit the proof for the following theorem as it is identical to \Cref{twCartStrongProd}.
\begin{thm}\label{pwCartStrongProd}
	The following are equivalent for monotone graph classes $\G_1$ and $\G_2$:
	\begin{compactenum}
		\item $\G_1 \boxtimes \G_2$ has bounded pathwidth;
		\item $\G_1 \CartProd \G_2$ has bounded pathwidth; and
		\item both $\G_1$ and $\G_2$ have bounded pathwidth and $\tilde{\textsf{v}}(\G_1)$ or $\tilde{\textsf{v}}(\G_2)$ is bounded.
	\end{compactenum} 
\end{thm}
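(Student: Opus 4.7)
The plan is to follow the structure of the proof of \Cref{twCartStrongProd} essentially verbatim, replacing treewidth by pathwidth and omitting the Hadwiger-number equivalence (which is specific to treewidth). The three-way equivalence will be proved as $(1)\Rightarrow(2)\Rightarrow(3)\Rightarrow(1)$.

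For $(1)\Rightarrow(2)$, I would simply observe that $G_1\CartProd G_2$ is a subgraph of $G_1\boxtimes G_2$ and that pathwidth is monotone under the subgraph relation, so a pathwidth bound on $\G_1\boxtimes \G_2$ immediately transfers to $\G_1 \CartProd \G_2$. For $(3)\Rightarrow(1)$, the pathwidth half of \Cref{twStrongProduct} gives $\pw(G_1\boxtimes G_2)\le (\pw(G_1)+1)\textsf{v}(G_2)-1$; if $\pw(\G_1),\pw(\G_2)$ are both bounded and, say, $\tilde{\textsf{v}}(\G_2)$ is bounded, then taking $G_2$ to be the class of connected components (every graph has the same pathwidth as its largest connected component) makes the right-hand side bounded, yielding a uniform bound on $\pw(\G_1\boxtimes \G_2)$.

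The substantive direction is $(2)\Rightarrow(3)$, which I would argue by contrapositive in two stages. Suppose $\G_1\CartProd \G_2$ has bounded pathwidth. Since $\G_1$ and $\G_2$ are monotone and contain non-empty graphs, every $G_1\in\G_1$ is a subgraph of $G_1\CartProd G_2$ for any $G_2\in\G_2$ (and vice versa), so $\pw(\G_1)\le \pw(\G_1\CartProd \G_2)$ and $\pw(\G_2)\le \pw(\G_1\CartProd \G_2)$ are both bounded. For the remaining condition, assume for contradiction that both $\tilde{\textsf{v}}(\G_1)$ and $\tilde{\textsf{v}}(\G_2)$ are unbounded. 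Then for every $n$, each $\G_i$ contains a connected graph on at least $n$ vertices, which contains a spanning tree, and hence (by monotonicity of $\G_i$) contains the path $P_n$ as a subgraph. Consequently $P_n\CartProd P_n \in \G_1\CartProd \G_2$ up to taking subgraphs, and by \Cref{twCartProdLower} applied with $k=1$ (paths are $1$-connected) we have $\tw(P_n\CartProd P_n)\ge n-1$, and therefore $\pw(P_n\CartProd P_n)\ge \tw(P_n\CartProd P_n)\ge n-1$. Letting $n\to\infty$ contradicts the assumed pathwidth bound, finishing $(2)\Rightarrow(3)$.

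The only mild obstacle is ensuring the reduction to paths in $(2)\Rightarrow(3)$ is valid under the monotone-class hypothesis; this is immediate because a connected graph on $n$ vertices contains a path on at least two vertices from any starting vertex via a DFS-order, and iterating extracts $P_n$ as a subgraph, after which monotonicity puts $P_n$ into $\G_i$. Everything else is a direct transcription of the treewidth argument, with the pathwidth version of \Cref{twStrongProduct} and the trivial inequality $\pw\ge \tw$ doing all the work.
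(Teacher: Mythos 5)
There is a genuine gap in your $(2)\Rightarrow(3)$ step. You claim that a connected graph on at least $n$ vertices contains a spanning tree and hence contains $P_n$ as a subgraph; this is false. The star $K_{1,n-1}$ is a connected graph (indeed a tree) on $n$ vertices whose longest path is $P_3$, and the DFS justification you offer does not repair this: the DFS tree of a star rooted at its centre is the star itself, and ``iterating'' does not produce a longer path (this is essentially why longest path is a hard problem). Take $\G_1=\G_2$ to be the class of all subgraphs of stars: this is a monotone class with $\tilde{\textsf{v}}$ unbounded that contains no $P_4$, so your reduction to $P_n\CartProd P_n$ collapses, and with it the appeal to $\tw(P_n\CartProd P_n)\geq n-1$.

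The conclusion you are after is nevertheless true, and the repair is to drop the detour through paths entirely: apply \Cref{twCartProdLower} with $k=1$ directly to connected graphs $G_1\in\G_1$ and $G_2\in\G_2$ each on at least $n$ vertices (every connected graph on at least two vertices is $1$-connected), which yields $\pw(G_1\CartProd G_2)\geq\tw(G_1\CartProd G_2)\geq n-1$. This is exactly what the paper's proof of \Cref{twCartStrongProd} does, and the paper declares the proof of \Cref{pwCartStrongProd} to be identical to it. The remaining directions of your argument --- $(1)\Rightarrow(2)$ via monotonicity of pathwidth under subgraphs, $(3)\Rightarrow(1)$ via the pathwidth half of \Cref{twStrongProduct} applied componentwise (using that the pathwidth of a graph is the maximum pathwidth of its components), and the first half of $(2)\Rightarrow(3)$ via $\G_i$ embedding into the product --- are correct and agree with the paper.
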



We conclude this subsection with some open problems. First and foremost, can we determine the treewidth of $G \boxtimes H$ up to a constant factor? A first step in resolving this question is to determine if \Cref{twHadwiger} can be strengthened by showing that there exists a constant $c>0$ such that $\tw( G \boxtimes H ) \geq c \tw(G) \tw(H)$ for all graphs $G$ and $H$. Similar questions arise for the cartesian product. It is also open whether there exist a constant $c>0$ such that $\tw(G \boxtimes H) \leq c \tw(G \CartProd H)$ for all graphs $G$ and $H$.

\subsection{Direct Product with $K_2$}\label{SectionDirectK2}
We now consider when a direct product has bounded treewidth and when it has bounded pathwidth. In comparison with the other two products, characterisation for this product is more involved. Before considering the direct product of two classes of graphs, we first investigate the direct product of a class of graphs with a single graph. \Cref{twStrongProduct} immediately provides an upper bound for the treewidth and pathwidth of the direct product of a class of graphs with a single graph. The challenge therefore lies in proving a lower bound. In related results, \citet{bottreau1998some} demonstrated that for every graph $G$ and non-bipartite graph $H$, $G$ is a minor of $G \times H$, and hence $\tw(G \times H)\geq \tw(G)$ and $\pw(G \times H)\geq \pw(G)$. This lower bound for treewidth was independently shown by \citet{eppstein2020treewidth}. It remains to consider the case when $H$ is bipartite. A case of particular interest is when $H=K_2$. \Citet{thomassen1988disjoint} showed the following. 

\begin{thm}[\cite{thomassen1988disjoint}]\label{MaintwBipartiteSubgraph}
	There exists a function $f$ such that every graph $G$ with $\tw(G)\geq f(k)$ contains a bipartite subgraph $\hat{G}\sse G$ such that $\tw(\hat{G})\geq k$.
\end{thm}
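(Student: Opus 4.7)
The plan is to combine the Grid Minor Theorem with a parity argument to extract a bipartite subgraph of $G$ with large treewidth. By the Grid Minor Theorem of Robertson and Seymour, there is a function $g$ such that every graph of treewidth at least $g(N)$ contains the $N \times N$ grid $\Gamma_N$ as a minor. I would set $N$ to be a sufficiently large polynomial in $k$ and take $f(k) := g(N)$, so that the hypothesis $\tw(G) \ge f(k)$ yields a model of $\Gamma_N$ in $G$: pairwise disjoint connected branch sets $B_{(i,j)}$ for $(i,j) \in [N]^2$ together with a connecting edge for each grid adjacency.

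Within each branch set I fix a spanning tree $T_{(i,j)}$, and let $M$ be the union of all these trees together with the chosen connecting edges, so that $M \subseteq G$ still contains $\Gamma_N$ as a minor. Because $\Gamma_N$ is bipartite, every cycle of $M$ uses an even number of connecting edges, and hence the parity of a cycle in $M$ is controlled entirely by the parities of the tree-paths it traverses inside the branch sets. I call a $4$-face of $\Gamma_N$ \emph{good} if the four tree-path parities around it sum to $0 \pmod 2$, and \emph{bad} otherwise; since the $4$-faces generate the cycle space of $\Gamma_N$, the subgraph $M$ is bipartite precisely when every $4$-face is good.

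To finish, I would argue that a contiguous $k \times k$ sub-grid of $\Gamma_N$ all of whose $4$-faces are good must exist once $N$ is a sufficiently large polynomial in $k$. When bad faces are sparse, this follows from a first-moment argument on random $k \times k$ windows of $\Gamma_N$. When bad faces are dense, one exploits the fact that $G$ typically supplies several candidate edges between any pair of adjacent branch sets: swapping one connecting edge for another toggles the parities of the two faces it borders, giving enough freedom to flip bad faces to good. Restricting $M$ to such a clean window yields a bipartite subgraph $\hat G \subseteq G$ containing $\Gamma_k$ as a minor, so $\tw(\hat G) \ge \tw(\Gamma_k) = k$.

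The main obstacle is the quantitative sub-grid extraction: controlling the global distribution of bad faces and interleaving the averaging and edge-swapping moves so that they are compatible with preserving a large contiguous sub-grid. This is essentially a Lipton--Tarjan-style argument on the planar structure of $\Gamma_N$, and it is precisely what determines how large $N$, and hence $f(k)$, must be chosen in terms of $k$.
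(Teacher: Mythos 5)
The paper gives no proof of this statement (it is quoted from Thomassen), so your argument has to stand on its own. Its first two steps are sound: the Grid Minor Theorem supplies a model of $\Gamma_N$, and since contracting the spanning forests of the branch sets induces an isomorphism of cycle spaces, the lifts of the $(N-1)^2$ internal $4$-faces generate the cycle space of $M$, so $M$ is bipartite exactly when every face-lift has even length, i.e.\ when every face is ``good''. The gap is in the extraction step. Your two cases --- bad faces are sparse, or there are several candidate connecting edges to swap --- are not exhaustive: a minor model guarantees only \emph{one} edge between the branch sets of adjacent grid vertices, and nothing prevents every face from being bad. Concretely, subdivide the edges of $\Gamma_N$ so that every internal $4$-face becomes an odd cycle; this is a solvable linear system over $GF(2)$ because the internal face boundaries are a basis of the cycle space of $\Gamma_N$. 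The resulting graph has treewidth at least $N$ (it contains $\Gamma_N$ as a minor), yet in its canonical grid model every face is bad, every branch set is a single vertex, and every connection is forced, so your averaging step finds no clean window and your swapping step has no legal moves. Even granting some swapping freedom, a first-moment argument over $k\times k$ windows needs the density of bad faces to be below roughly $k^{-2}$, and you have no mechanism to drive it that low.

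The standard repair is not to hunt for a window in which \emph{every} face is good, but to switch the proper $2$-colourings of the branch pieces globally and keep only the \emph{agreeable} connections, of which at least half can always be retained; one then needs an underlying structure whose treewidth survives the loss of half of its connections. A contiguous sub-grid is not robust in this sense --- deleting half of the edges of $\Gamma_N$ can leave a forest --- which is precisely why \Cref{BrambleImpliesDisjointPathsKTwo} and \Cref{MainLemmaConstructBipartiteSubgraphs} in this paper route the same parity/switching idea through grid-like minors, where keeping half of the $\binom{\ell}{2}$ pair-connections still yields a large clique minor by the Kostochka--Thomason bound. So your parity bookkeeping is the right instinct, but the quantitative finish needs a robustness mechanism of that kind rather than the clean-window dichotomy you propose.
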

Note that the function $f$ in \Cref{MaintwBipartiteSubgraph} is exponential with respect to $k$. Since every bipartite subgraph of $G$ is also a subgraph of $G \times K_2$, we have the following.
\begin{cor}\label{twUnboundedKtwo}
	For every graph class $\G$ with unbounded treewidth, $\G \times K_2$ also has unbounded treewidth.
\end{cor}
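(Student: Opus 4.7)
The plan is to combine Thomassen's bipartite subgraph theorem (\Cref{MaintwBipartiteSubgraph}) with a straightforward embedding argument. Given an arbitrary integer $k$, since $\G$ has unbounded treewidth I can select some $G \in \G$ with $\tw(G) \geq f(k)$, where $f$ is the function supplied by \Cref{MaintwBipartiteSubgraph}. That theorem then yields a bipartite subgraph $\hat{G} \sse G$ with $\tw(\hat{G}) \geq k$.

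The key step is to observe that every bipartite subgraph $\hat{G}$ of $G$ embeds as a subgraph of $G \times K_2$. Writing $V(K_2) = \{1,2\}$ and fixing a bipartition $(A,B)$ of $V(\hat{G})$, I would define $\phi \colon V(\hat{G}) \to V(G \times K_2)$ by $\phi(a) = (a,1)$ for $a \in A$ and $\phi(b) = (b,2)$ for $b \in B$. For any edge $ab \in E(\hat{G})$ with $a \in A$, $b \in B$, we have $ab \in E(G)$ and $12 \in E(K_2)$, so $\phi(a)\phi(b) \in E(G \times K_2)$ by definition of the direct product. Hence $\hat{G}$ is isomorphic to a subgraph of $G \times K_2$.

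Since treewidth is monotone under subgraphs, this gives $\tw(G \times K_2) \geq \tw(\hat{G}) \geq k$. As $k$ was arbitrary, the class $\G \times K_2$ has unbounded treewidth. The main content is already packaged inside \Cref{MaintwBipartiteSubgraph}; the only real step for the corollary is the embedding observation, which is immediate from the definition of the direct product. So there is no genuine obstacle to overcome here beyond recalling that $G \times K_2$ contains every bipartite subgraph of $G$.
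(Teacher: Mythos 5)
Your proposal is correct and follows exactly the paper's route: the paper derives the corollary from \Cref{MaintwBipartiteSubgraph} together with the observation that every bipartite subgraph of $G$ is a subgraph of $G \times K_2$, which is precisely the embedding you spell out. No gap; you have simply made explicit the embedding the paper leaves as a one-line remark.
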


Together with \Cref{twStrongProduct}, \Cref{twUnboundedKtwo} demonstrates that $\tw(G)$ and $\tw(G \times K_2)$ are tied. Similarly, for pathwidth, as explained in our companion paper \cite{HW2021bipartite}, an analogous result to \Cref{MaintwBipartiteSubgraph} is implied by the excluded forest minor theorem \cite{robertson1983graph,bienstock1991quickly}.

\begin{thm}[\cite{HW2021bipartite}]\label{MainpwBipartiteSubgraph}
	There exists a function $f$ such that every graph $G$ with $\pw(G)\geq f(k)$ contains a bipartite subgraph $\hat{G}\sse G$ such that $\pw(\hat{G})\geq k$.
\end{thm}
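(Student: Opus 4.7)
My plan is to deduce this from the excluded forest minor theorem of \citet{bienstock1991quickly}, which states that for every forest $F$ there is a constant $c(F)$ such that every graph with pathwidth at least $c(F)$ contains $F$ as a minor. The strategy is to fix a \emph{tree} $T_k$ with $\pw(T_k)\geq k$ (for instance the complete binary tree of height $2k$, whose pathwidth is well-known to grow with its height), and to set $f(k):=c(T_k)$.

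Suppose $\pw(G)\geq f(k)$. Then $G$ contains $T_k$ as a minor, witnessed by pairwise disjoint connected branch sets $\{\mu(v):v\in V(T_k)\}$ together with, for each edge $uv\in E(T_k)$, a specific edge $e_{uv}\in E(G)$ with one endpoint in $\mu(u)$ and the other in $\mu(v)$. Now build the desired bipartite subgraph $\hat{G}\sse G$ as follows: for each $v\in V(T_k)$ take a spanning tree $S_v$ of $G[\mu(v)]$, and let $\hat{G}$ be the union of all the $S_v$'s together with the edges $\{e_{uv}:uv\in E(T_k)\}$. Since the $\mu(v)$'s are vertex-disjoint and the connecting edges $e_{uv}$ assemble the trees $S_v$ along the tree structure of $T_k$ (without creating any cycle), $\hat{G}$ is itself a tree, and in particular bipartite.

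By construction, $T_k$ is a minor of $\hat{G}$ (the branch sets $V(S_v)=\mu(v)$ and connecting edges $e_{uv}$ are still present). Since pathwidth is minor-monotone, $\pw(\hat{G})\geq \pw(T_k)\geq k$, completing the argument.

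The only mildly delicate point is justifying the existence of a tree $T_k$ with $\pw(T_k)\geq k$; this is standard, and any of several well-known families (complete binary trees of sufficient height, or the trees $T_k$ constructed in \citet{robertson1983graph}) works. The main conceptual step is the observation that since we only need a \emph{subgraph} (not an induced subgraph) and since every tree is bipartite, the ``take spanning trees of branch sets plus the connecting edges'' construction automatically produces a bipartite witness whose pathwidth is at least that of the minor we found.
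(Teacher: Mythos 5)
Your proposal is correct and follows exactly the route the paper indicates: the paper does not give a detailed proof but states that the result ``is implied by the excluded forest minor theorem'' of \citet{robertson1983graph,bienstock1991quickly}, which is precisely the deduction you carry out (find a tree minor of large pathwidth, extract a tree --- hence bipartite --- subgraph from its model, and use minor-monotonicity of pathwidth). The details you supply, including the tree/forest structure of the extracted subgraph and the existence of trees of unbounded pathwidth, are all sound.
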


\begin{cor}\label{GtimesKTwoUnboundedPathwidth}
	For every graph class $\G$ with unbounded pathwidth, $\G \times K_2$ also has unbounded pathwidth.
\end{cor}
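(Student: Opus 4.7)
The plan is to mimic exactly the short argument sketched immediately before \Cref{twUnboundedKtwo}, but with pathwidth in place of treewidth, using \Cref{MainpwBipartiteSubgraph} as the analogue of \Cref{MaintwBipartiteSubgraph}. The only substantive observation needed is that every bipartite subgraph of a graph $G$ is isomorphic to a subgraph of $G \times K_2$, and this observation is purely structural and independent of whichever width parameter one cares about.

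First I would record the embedding observation. Let $\hat{G} \subseteq G$ be bipartite with bipartition $(A,B)$, and write $V(K_2)=\{1,2\}$. Define $\phi\colon V(\hat{G}) \to V(G \times K_2)$ by $\phi(v)=(v,1)$ for $v \in A$ and $\phi(v)=(v,2)$ for $v \in B$. Any edge $uv \in E(\hat{G})$ with $u\in A$ and $v \in B$ satisfies $uv \in E(G)$ and $12\in E(K_2)$, so $\phi(u)\phi(v)=(u,1)(v,2) \in E(G\times K_2)$. Hence $\phi$ realises $\hat{G}$ as a subgraph of $G \times K_2$, so $\pw(G\times K_2)\geq \pw(\hat G)$ since pathwidth is monotone under subgraphs.

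Now assume, for contradiction, that $\G \times K_2$ has bounded pathwidth, with $\pw(J) \leq k$ for every $J \in \G \times K_2$. Let $f$ be the function from \Cref{MainpwBipartiteSubgraph}. Since $\G$ has unbounded pathwidth, pick $G \in \G$ with $\pw(G)\geq f(k+1)$. By \Cref{MainpwBipartiteSubgraph}, $G$ contains a bipartite subgraph $\hat{G}$ with $\pw(\hat{G})\geq k+1$. By the embedding above, $\pw(G \times K_2) \geq \pw(\hat G) \geq k+1$, contradicting the assumed bound. Hence $\G \times K_2$ has unbounded pathwidth.

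There is no real obstacle here: the whole content is packaged into \Cref{MainpwBipartiteSubgraph}, which the authors have already cited from their companion paper. The proof is a one-line consequence of that theorem together with the trivial bipartite-subgraph embedding into $G \times K_2$, exactly parallel to how \Cref{twUnboundedKtwo} is derived from \Cref{MaintwBipartiteSubgraph}.
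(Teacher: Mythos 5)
Your proof is correct and follows exactly the paper's (implicit) argument: the paper derives this corollary from \Cref{MainpwBipartiteSubgraph} via the same observation that every bipartite subgraph of $G$ embeds into $G \times K_2$, just as \Cref{twUnboundedKtwo} is derived from \Cref{MaintwBipartiteSubgraph}. Your explicit description of the embedding $\phi$ is a welcome elaboration of a step the paper leaves unstated.
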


Together with \Cref{twStrongProduct}, it follows that $\pw(G)$ and $\pw(G \times K_2)$ are tied. Note that the function $f$ in \Cref{MainpwBipartiteSubgraph} is exponential with respect to $k$. We now strengthen these results by demonstrating that $\tw(G)$ and $\tw(G \times K_2)$ are polynomially tied and then showing that this implies that $\pw(G)$ and $\pw(G \times K_2)$ are polynomially tied.

\begin{thm}\label{TwDirectKTwoPolynomial}
	There exists a positive constant $c$ such that $\tw(G \times K_2)\geq k$ for every graph $G$ with $\tw(G)\geq ck^4\log^{5/2}(k)$.
\end{thm}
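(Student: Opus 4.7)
The plan is to reduce the statement to finding a bipartite subgraph of $G$ of treewidth at least $k$. The key observation is that $G\times K_2$ is the bipartite double cover of $G$: if $H\subseteq G$ is bipartite with bipartition $(A,B)$, then the vertex set $(A\times\{0\})\cup(B\times\{1\})$ induces a copy of $H$ inside $G\times K_2$, since every edge $uv\in E(H)$ with $u\in A$ and $v\in B$ gives rise to the edge $(u,0)(v,1)$ of the direct product. Hence $\tw(G\times K_2)\geq \tw(H)$ for every bipartite subgraph $H$ of $G$, so it suffices to show that the hypothesis $\tw(G)\geq ck^4\log^{5/2}(k)$ forces $G$ to contain a bipartite subgraph of treewidth at least $k$. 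This dispenses with the ``doubling'' aspect of the direct product entirely and turns the problem into a purely structural one about $G$.

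To produce such a subgraph, I would invoke a polynomial version of the excluded wall (or grid) theorem to locate inside $G$ a subdivision of the $r$-wall $W_r$ as a subgraph, where $r$ grows polynomially with $\tw(G)$. Walls are bipartite and have treewidth $\Theta(r)$, and subdivision preserves treewidth once it is at least $2$, so if the subdivision found in $G$ were itself bipartite we would immediately obtain a bipartite subgraph of $G$ of treewidth $\Theta(r)$, and choosing parameters so that $r\geq k$ would conclude the proof. This is where the ``wall'' structure is preferable to a clique or complete bipartite subdivision, since the polynomial bounds for extracting subcubic topological minors from large treewidth are much better than for dense ones.

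The main obstacle is that the subdivision paths $P_e$ replacing the edges $e\in E(W_r)$ have arbitrary lengths $\ell_e$, so the subdivision in $G$ may contain odd cycles. I would resolve this by a pigeonhole/parity argument: colour each edge $e\in E(W_r)$ by $\ell_e\bmod 2$ and pass to a large sub-wall $W_{r'}\subseteq W_r$ on which every edge has the same parity (or more generally, on which every brick cycle carries an even total parity). Because the brick cycles (together with the outer face) generate the cycle space of a wall, this guarantees that no odd cycle survives in the restricted subdivision, which is therefore bipartite, embeds into $G$ as a bipartite subgraph of treewidth $\Theta(r')$, and hence embeds into $G\times K_2$ by the reduction above. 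Matching the stated bound $ck^4\log^{5/2}(k)$ amounts to bookkeeping two sources of polynomial loss --- the blow-up in the excluded wall theorem ($r$ versus $\tw(G)$) and the loss in the sub-wall extraction ($r'$ versus $r$) --- so that together they contribute exponent $4$ and at most $\log^{5/2}(k)$ of logarithmic overhead. Carefully organizing the pigeonhole step row-by-row and column-by-column on the wall, to avoid paying an independent logarithmic factor at each stage, is the most technical aspect of the argument and the principal place where the specific exponents in the statement are pinned down.
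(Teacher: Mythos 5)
Your opening reduction is correct as far as it goes, but it discards exactly the leverage that makes the stated bound achievable. Reducing to ``$G$ contains a bipartite \emph{subgraph} of treewidth at least $k$'' replaces the theorem by a strictly stronger statement: a connected subgraph of $G\times K_2$ may contain both copies $(v,0)$ and $(v,1)$ of a vertex of $G$, so it projects $2$-to-$1$ onto a non-bipartite subgraph of $G$ and need not be isomorphic to any subgraph of $G$ (e.g.\ $C_{2k+1}\times K_2=C_{4k+2}$). The paper uses your reduction only for the qualitative statement (\Cref{twUnboundedKtwo}, via Thomassen's \Cref{MaintwBipartiteSubgraph}, whose function is explicitly noted to be exponential) and abandons it for the polynomial bound. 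The actual proof builds a \emph{grid-like-minor} directly in $G\times K_2$ (\Cref{DirectKTwoGridLikeMinor}): it takes the Reed--Wood path system of \Cref{BrambleImpliesDisjointPaths} in $G$ ($\ell$ disjoint paths with $k$ disjoint linkages between each pair), keeps for each pair the majority parity class of its linkage paths, and applies \Cref{MainLemmaConstructBipartiteSubgraphs} to retain half of the pairs, a total loss of a constant factor (\Cref{BrambleImpliesDisjointPathsKTwo}). Crucially, linkages for different pairs may pass through the same vertex of $G$ and lift to different copies in $G\times K_2$; this is harmless for the direct product but fatal for your ``bipartite subgraph of $G$'' target.

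Beyond this conceptual mismatch there are two quantitative gaps. First, no excluded-wall theorem produces a wall subdivision of order $r$ from treewidth $O(r^4\,\mathrm{polylog}\, r)$; the known polynomial grid/wall theorems have exponent at least $9$, and the $\ell^4\sqrt{\log \ell}$ bound of \Cref{MainGridLikeMinors} applies to grid-like-minors, a much weaker object (a path system whose bipartite intersection graph has a $K_\ell$ minor), which is precisely why the paper targets that structure. Second, the parity pigeonhole on the wall is a genuine missing idea, not bookkeeping. Bipartiteness of the restricted subdivision requires every brick of the chosen sub-wall to have even total boundary length, and a sub-wall's bricks are rectangular blocks of original bricks; after passing to prefix sums over $\mathbb{Z}_2$, the requirement becomes that the selected rows, restricted to the selected columns, take at most two complementary values in a $0/1$ matrix that may be arbitrary. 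For a uniformly random parity assignment this caps the achievable sub-wall order at $O(\log r)$ --- an exponential, not polylogarithmic, loss. So even granting the wall, the argument as sketched does not close, and the exponent $4$ in the statement cannot be recovered along this route.
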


To prove \Cref{TwDirectKTwoPolynomial}, we make use of grid-like-minors which were introduced by \citet{reed2012polynomial}. A \emph{grid-like-minor of order $\ell$} in a graph $G$ is a set $\Path$ of paths in $G$ such that the intersection graph of $\Path$ is bipartite and contains a $K_{\ell}$ minor.\footnote{The intersection graph of a set $X$, whose elements are sets, has vertex set $X$ where distinct vertices are adjacent whenever the corresponding sets have a non-empty intersection.} \citet{reed2012polynomial} showed the following.

\begin{thm}[\cite{reed2012polynomial}]\label{MainGridLikeMinors}
	For some positive constant $c$, every graph with treewidth at least $c\ell^4\sqrt{\log(\ell)}$ contains a grid-like-minor of order $\ell$. 
\end{thm}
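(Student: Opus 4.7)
The plan follows the approach of Reed and Wood, combining three main ingredients: a density lower bound on intersection graphs of paths in high-treewidth graphs, a random bipartition to enforce bipartiteness, and the Kostochka--Thomason clique-minor bound (every graph of average degree $d$ contains $K_t$ as a minor for some $t=\Omega(d/\sqrt{\log d})$).

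First, apply the Treewidth Duality Theorem (\Cref{BrambleDuality}) to extract from $G$ a bramble of order $k:=c\ell^4\sqrt{\log\ell}+1$. A standard argument converts this into a \emph{well-linked} set $W\subseteq V(G)$ of size $\Omega(k)$: for any two equal-sized subsets $A,B\subseteq W$, there exist $|A|$ pairwise vertex-disjoint $A$--$B$ paths in $G$. Using $W$, I iteratively construct a family $\Path$ of paths in $G$, each with endpoints in $W$, with $|\Path|=\Theta(\ell^2\sqrt{\log\ell})$. Repeated Menger-type extractions from the well-linked structure force every newly constructed path to share vertices with many earlier ones, so that the intersection graph $I(\Path)$ ends up with average degree $\Omega(\ell\sqrt{\log\ell})$.

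Next, randomly $2$-colour the paths in $\Path$, assigning each independently to $\{0,1\}$. Let $I'$ be the bipartite subgraph of $I(\Path)$ consisting of the edges between the two colour classes. In expectation $I'$ retains half of the edges of $I(\Path)$, so with positive probability $I'$ still has average degree $\Omega(\ell\sqrt{\log\ell})$. The sub-collection $\Path'\subseteq\Path$ whose intersection graph is $I'$ has, by construction, a bipartite intersection graph. Applying Kostochka--Thomason to $I'$ then yields a $K_\ell$-minor in $I(\Path')$, which is the desired grid-like-minor of order $\ell$.

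The main obstacle is the first step: producing a path family whose intersection graph is dense enough while keeping every parameter polynomial in $\ell$. The specific bound $c\ell^4\sqrt{\log\ell}$ emerges from the product of two sources of loss, namely (i) the average-degree threshold $\Omega(\ell\sqrt{\log\ell})$ demanded by Kostochka--Thomason to extract a $K_\ell$-minor, and (ii) the roughly quadratic-in-degree size of well-linked set needed to guarantee this intersection density and to survive the constant-factor loss from the random bipartition; a naive bookkeeping that instead relied on the Robertson--Seymour grid theorem would give a much worse (exponential) bound, so the polynomial bound really hinges on avoiding the grid theorem and producing the bipartite structure directly via probabilistic colouring.
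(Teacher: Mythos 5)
There is a genuine gap, and it sits exactly where you obtain bipartiteness. A grid-like-minor is defined as a set $\Path$ of paths whose \emph{intersection graph} is bipartite and contains a $K_\ell$ minor; bipartiteness is a property of the path family itself, since two paths that share a vertex are adjacent in $I(\Path)$ no matter what. Your random $2$-colouring produces a bipartite \emph{spanning subgraph} $I'$ of $I(\Path)$, but there is no sub-collection $\Path'\subseteq\Path$ whose intersection graph equals $I'$: deleting an edge of $I(\Path)$ does not make the two corresponding paths disjoint, and the intersection graph of any sub-collection you retain is an \emph{induced} subgraph of $I(\Path)$, which in your construction is deliberately dense and hence generally far from bipartite. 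So the object you end with does not certify a grid-like-minor. (A probabilistic halving of this flavour does appear in this paper, in \Cref{MainLemmaConstructBipartiteSubgraphs} and the proof of \Cref{BrambleImpliesDisjointPathsKTwo}, but there it is applied to a subgraph of $G$ assembled from paths, where discarding half the connecting paths genuinely removes them; it cannot be transplanted to intersection graphs.)

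The actual Reed--Wood argument, which this paper adapts in the proof of \Cref{DirectKTwoGridLikeMinor}, obtains bipartiteness structurally rather than probabilistically. Via \Cref{BrambleImpliesDisjointPaths} one extracts $\ell$ pairwise disjoint paths $P_1,\dots,P_\ell$ together with, for each pair $\{i,j\}$, a family $\mathcal{Q}_{i,j}$ of $k=\Theta(\ell^3\sqrt{\log\ell})$ disjoint $P_i$--$P_j$ paths (this is where the $\ell\cdot k=\Theta(\ell^4\sqrt{\log\ell})$ treewidth threshold comes from). For each pair of pairs, the intersection graph of $\mathcal{Q}_{i,j}\cup\mathcal{Q}_{a,b}$ is bipartite, so either it already has a $K_\ell$ minor (and we are done) or, by \Cref{AvgDegForceMinor}, it is $d(\ell)$-degenerate; an independent-transversal lemma then selects one path $Q_{i,j}$ from each family so that the selected linking paths are pairwise disjoint. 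The final family $\{P_i\}\cup\{Q_{i,j}\}$ has a bipartite intersection graph because each side consists of pairwise disjoint paths, and it contains the $1$-subdivision of $K_\ell$, hence a $K_\ell$ minor. Your outline is missing both this two-layer structure of the path system and the independent-transversal step that does the work your random bipartition cannot.
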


We now explain how to adapt the proof for \Cref{MainGridLikeMinors} to show that if a graph $G$ has sufficiently large treewidth, then $G \times K_2$ contains a grid-like-minor of large order. A key lemma that Reed and Wood used to prove \Cref{MainGridLikeMinors} is the following.

\begin{lem}[\cite{reed2012polynomial}]\label{BrambleImpliesDisjointPaths}
	For all integers $k,\ell \geq 1$, every graph $G$ with treewidth at least $k\ell-1$ contains $\ell$ disjoint paths $P_1,\dots,P_{\ell}$, and for distinct $i,j \in [\ell]$, $G$ contains $k$ disjoint paths between $P_i$ and $P_j$.
\end{lem}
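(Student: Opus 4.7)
By the Treewidth Duality Theorem (\Cref{BrambleDuality}), the hypothesis $\tw(G)\ge k\ell-1$ yields a bramble $\mathcal{B}$ in $G$ of order at least $k\ell$. My plan is to extract the $\ell$ disjoint paths greedily from elements of $\mathcal{B}$, and then use the remaining order of $\mathcal{B}$ together with Menger's theorem to produce $k$ disjoint paths between each pair.

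\textbf{Step 1 (greedy construction of $P_1,\dots,P_\ell$).} Build the paths one at a time. Suppose $P_1,\dots,P_{i-1}$ have already been chosen, pairwise disjoint, and let $U_{i-1}:=V(P_1)\cup\dots\cup V(P_{i-1})$. I will keep each $P_h$ as small as possible (ultimately it suffices to take $P_h$ to be a single vertex, or a short path, of some chosen bramble element $B_h$) so that $|U_{i-1}|$ remains less than the order of $\mathcal{B}$. Consequently $U_{i-1}$ is not a hitting set and some $B_i\in\mathcal{B}$ is disjoint from it; since $B_i$ is connected, any path inside $G[B_i]$ may be taken as $P_i$. Iterating $\ell$ times produces the required $\ell$ pairwise disjoint paths, each contained in a distinct bramble element $B_i\in\mathcal{B}$.

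\textbf{Step 2 (Menger for each pair).} Fix distinct $i,j\in[\ell]$ and suppose, for a contradiction, that there is a vertex separator $S\subseteq V(G)$ with $|S|<k$ for which no path in $G-S$ joins $V(P_i)$ to $V(P_j)$. Because $|S|<k\le k\ell$, the set $S$ cannot hit every element of $\mathcal{B}$; moreover, after deleting $S$, every surviving bramble element remains connected and any two surviving bramble elements still touch in $G-S$. In particular, $B_i\setminus S$ and $B_j\setminus S$ are non-empty (up to enlarging $S$ in a harmless way, or by an initial choice of $P_i,P_j$ that leaves at least $k$ slack in each $B_i$), each is connected, and they touch in $G-S$, possibly via a chain of other surviving bramble elements. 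This produces a path in $G-S$ from $V(P_i)$ to $V(P_j)$, contradicting the choice of $S$. Hence every $(V(P_i),V(P_j))$-separator has size at least $k$, and Menger's theorem delivers the required $k$ vertex-disjoint $V(P_i)$--$V(P_j)$ paths in $G$.

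\textbf{Main obstacle.} The delicate point is the counting in Step~2: I must make sure that after removing fewer than $k$ vertices, bramble elements still certify a $V(P_i)$--$V(P_j)$ connection. This is where the specific arithmetic $k\ell$ in the treewidth hypothesis is used: the bramble has so much slack that even after the vertices of the other $\ell-2$ paths, the vertices of $P_i,P_j$ themselves, and a putative separator of size $k-1$ have all been accounted for, the bramble's order strictly exceeds the size of any such ``bad'' vertex set, forcing some bramble element to survive and to touch the portions of $B_i$ and $B_j$ that remain. Choosing the $P_h$'s small enough in Step~1 to leave this slack available throughout Step~2, and turning a chain of pairwise-touching surviving bramble elements into an actual path in $G-S$, is the crux of the argument.
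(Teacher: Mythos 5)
The paper does not prove this lemma; it is imported verbatim from \citet{reed2012polynomial}. So your attempt has to stand on its own, and it does not: there is a genuine gap in Step 2, traceable to a wrong choice in Step 1. You take each $P_i$ to be a small path (even a single vertex) inside a single bramble element $B_i$. With that choice, the claim that every $V(P_i)$--$V(P_j)$ separator has size at least $k$ is simply false. Concretely, let $G$ be the $25\times 25$ grid (so $\tw(G)=25\geq k\ell-1$ with $k=10$, $\ell=2$, using the crosses bramble), and let $P_1=\{v\}$ and $P_2=\{u\}$ be single vertices lying in two different crosses with $\dist(u,v)>2$. Then $N(v)$, a set of at most $4<k$ vertices, separates $V(P_1)$ from $V(P_2)$, so there are at most $4$ disjoint $P_1$--$P_2$ paths. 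Your Step 2 argument breaks at exactly the point you flag as delicate: a surviving chain of pairwise-touching bramble elements in $G-S$ gives a connected subgraph, but nothing forces that subgraph to contain a vertex of $P_i$ or of $P_j$, because your paths meet only one bramble element each. (A secondary problem: $B_i\setminus S$ need not be connected, so even "the part of $B_i$ containing $P_i$ touches $B_j\setminus S$" does not follow.)

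The missing idea is that each $P_i$ must be a \emph{hitting path}: $V(P_i)$ must intersect every element of a common sub-bramble $\mathcal{C}\subseteq\mathcal{B}$ of order at least $k$. With that property the separator argument is immediate --- a set $S$ with $|S|<k$ fails to hit some $C\in\mathcal{C}$, and this single connected set $C$ avoids $S$ and meets both $V(P_i)$ and $V(P_j)$, yielding the contradiction; no chains of elements and no connectivity of $B_i\setminus S$ are needed. The real content of the lemma, and the reason the hypothesis is $k\ell$ rather than $k+\ell$ or similar, is the construction of $\ell$ \emph{pairwise disjoint} hitting paths of such a sub-bramble; this uses the nontrivial folklore fact that every bramble admits a path meeting all of its elements, iterated with careful bookkeeping of how much bramble order each extracted path consumes. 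Your greedy selection of $\ell$ disjoint vertices in distinct bramble elements uses the $k\ell$ budget only to guarantee disjointness, which is far too weak to support the Menger step.
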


Given a black-white colouring of the vertices of a bipartite graph, to \emph{switch} the colouring means to recolour the black vertices white and the white vertices black. For a graph $G$ with vertex-disjoint subgraphs $H_1$ and $H_2$, a path $P=(v_1,\dots,v_n)$ in $G$ \emph{joins} $H_1$ and $H_2$ if $V(P) \cap V(H_1)=\{v_1\}$ and $V(P) \cap V(H_2)=\{v_n\}$. We make use of the following lemma from our companion paper \cite{HW2021bipartite} which generalises the result by Erd\H{o}s \cite{erdos1965some} which states that every graph contains a bipartite subgraph with at least half of the edges.

\begin{lem}[\cite{HW2021bipartite}]\label{MainLemmaConstructBipartiteSubgraphs}
	Let $G$ be a graph and let $H_1,\dots,H_t$ be vertex-disjoint bipartite subgraphs in $G$. Suppose there exists a set of internally disjoint paths $\Pcal=\{P_1,\dots, P_k\}$ in $G$ such that for all $i \in [k]$, $P_i$ joins $H_j$ and $H_{\ell}$ for some distinct $j,\ell \in [t]$ and that $P_i$ and $H_b$ are disjoint for all $b \in [t]\bs \{j,\ell\}$. Then there exists a subset $\mathcal{S} \sse \Pcal$ where $|\mathcal{S}|\geq k/2$ such that the graph $\hat{G}:=(\bigcup_{i \in [t]} H_i) \cup (\bigcup_{S \in \mathcal{S}}S)$ is a bipartite subgraph of $G$.
\end{lem}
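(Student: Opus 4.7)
The plan is to generalise Erd\H{o}s's probabilistic proof that every graph has a bipartite spanning subgraph with at least half of its edges. For each $i \in [t]$, fix a proper $2$-colouring $c_i \colon V(H_i)\to \{0,1\}$ of $H_i$. For each path $P_s \in \Pcal$ joining $H_j$ to $H_\ell$ with endpoints $u_s \in V(H_j)$ and $v_s \in V(H_\ell)$, let $\ell_s$ denote the length (number of edges) of $P_s$ and call $P_s$ \emph{consistent} with the family $(c_1,\dots,c_t)$ if
\begin{equation*}
c_j(u_s) + c_\ell(v_s) \equiv \ell_s \pmod 2.
\end{equation*}
This is precisely the parity condition needed so that the unique proper $2$-colouring of the path $P_s$ assigning the value $c_j(u_s)$ to $u_s$ also assigns the value $c_\ell(v_s)$ to $v_s$.

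Now perform the following random experiment: independently for each $i \in [t]$, with probability $1/2$ switch the colouring of $H_i$ (i.e.\ replace $c_i$ with $1-c_i$). Let $y_i \in \{0,1\}$ be the indicator that $H_i$ was switched. Under the new colourings, $P_s$ is consistent if and only if $c_j(u_s) + c_\ell(v_s) + y_j + y_\ell \equiv \ell_s \pmod 2$. Since $j \neq \ell$, the random variable $y_j + y_\ell \pmod 2$ is uniform on $\{0,1\}$, so $P_s$ is consistent with probability exactly $1/2$. By linearity of expectation, the expected number of consistent paths is $k/2$, so there exists a deterministic choice of switches under which the set $\mathcal{S}\sse \Pcal$ of consistent paths satisfies $|\mathcal{S}| \geq k/2$.

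Finally, construct a proper $2$-colouring $c$ of $\hat{G} := (\bigcup_{i\in[t]} H_i) \cup (\bigcup_{S\in \mathcal{S}} S)$. Set $c\restriction V(H_i) := c_i$ (after switching); for each $P_s \in \mathcal{S}$ joining $H_j$ and $H_\ell$, colour the internal vertices of $P_s$ using the unique alternating $2$-colouring of the path that extends $c_j(u_s)$ at $u_s$. Consistency of $P_s$ ensures this colouring agrees with $c_\ell(v_s)$ at $v_s$, so $c$ is well-defined at endpoints; well-definedness at internal vertices uses the hypothesis that paths in $\Pcal$ are internally disjoint from each other and, by the definition of ``joins'' together with the disjointness condition, intersect every $H_b$ only at their endpoints. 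Every edge of $\hat{G}$ either lies inside some $H_i$ (properly coloured since $c_i$ was a proper $2$-colouring of $H_i$) or lies on some $P_s \in \mathcal{S}$ (properly coloured since the colouring alternates along $P_s$), so $\hat{G}$ is bipartite, as required. The only mild obstacle is the parity book-keeping involved in defining ``consistent'' and in verifying well-definedness at shared endpoints; the probabilistic step itself is routine and can be derandomised via the method of conditional expectations should an explicit construction be desired.
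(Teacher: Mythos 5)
Your proof is correct: the parity bookkeeping for ``consistent'' paths is right, the random switching of the $2$-colourings of the $H_i$ makes each path consistent with probability exactly $1/2$ because its two endpoints lie in \emph{distinct} subgraphs, and linearity of expectation plus the disjointness hypotheses yields the desired bipartite $\hat{G}$. The paper itself defers the proof of this lemma to the companion paper \cite{HW2021bipartite}, but it explicitly frames the result as a generalisation of Erd\H{o}s's half-the-edges bipartite subgraph theorem, and your random-switching argument is precisely that intended generalisation.
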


The next lemma is the main original contribution for this subsection in which we adapt \Cref{BrambleImpliesDisjointPaths} to the setting of a direct product with $K_2$.
\begin{lem}\label{BrambleImpliesDisjointPathsKTwo}
	Let $G$ be a graph with treewidth at least $2k\ell-1$ for some integers $k,l \geq 1$. Then there exists a set $X \sse E(K_{\ell})$ where $|X|\geq |E(K_{\ell})|/2$ such that $G \times K_2$ contains $\ell$-disjoint paths $P_1, \dots, P_{\ell}$ and for all $ij \in X$, $G \times K_2$ contains $k$-disjoint paths between $P_i$ and $P_j$.
\end{lem}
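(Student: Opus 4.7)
The plan is to apply \Cref{BrambleImpliesDisjointPaths} in $G$ with parameters $(2k,\ell)$ to obtain $\ell$ vertex-disjoint paths $Q_1,\dots,Q_\ell$ in $G$, together with $2k$ internally disjoint paths in $G$ joining $Q_i$ to $Q_j$ for each pair $i\neq j$, and then lift this entire configuration to $G\times K_2$ via a parity/pigeonhole argument, paying a factor-of-two price on the number of good pairs.

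For the lifting, I would fix for each $i\in[\ell]$ a proper $2$-coloring $c_i\colon V(Q_i)\to\{0,1\}$, and for each bit $\epsilon_i\in\{0,1\}$ let $P_i^{\epsilon_i}$ denote the path in $G\times K_2$ with vertex set $\{(u,c_i(u)+\epsilon_i\bmod 2):u\in V(Q_i)\}$. A direct check shows that any connecting path $R$ with endpoints $u\in V(Q_i)$ and $v\in V(Q_j)$ admits two lifts to $G\times K_2$, and one of these has endpoints in $V(P_i^{\epsilon_i})$ and $V(P_j^{\epsilon_j})$ if and only if
\[\epsilon_i+\epsilon_j\equiv c_i(u)+c_j(v)+|R|\pmod 2.\]
Call the right-hand side the \emph{parity} $p(R)\in\{0,1\}$ of $R$.

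For each pair $\{i,j\}$ the $2k$ connecting paths split between the two parity classes, so by pigeonhole at least $k$ of them share a common parity $\tilde p_{ij}$. Now sample $\epsilon_1,\dots,\epsilon_\ell\in\{0,1\}$ uniformly and independently; for each pair $\{i,j\}$ we have $\Pr[\epsilon_i+\epsilon_j\equiv \tilde p_{ij}\pmod 2]=\tfrac12$, so by linearity of expectation at least $|E(K_\ell)|/2$ pairs satisfy this congruence. Fix such a realisation, let $X$ be the set of matching pairs, and set $P_i:=P_i^{\epsilon_i}$. The paths $P_1,\dots,P_\ell$ are pairwise vertex-disjoint since the $Q_i$ are; for each $ij\in X$, the at least $k$ connecting paths of parity $\tilde p_{ij}$ lift to paths between $P_i$ and $P_j$, and these lifts are internally disjoint because internally disjoint paths in $G$ project to disjoint interior vertex sets.

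The main obstacle is the parity bookkeeping: one must verify the displayed congruence by walking through a lift carefully, and check that internal disjointness survives lifting. The rest is direct pigeonhole and linearity of expectation. The factor $2$ in the hypothesis $\tw(G)\geq 2k\ell-1$ is exactly what is consumed to convert the arbitrary setting of \Cref{BrambleImpliesDisjointPaths} into the bipartite setting forced by the direct product with $K_2$.
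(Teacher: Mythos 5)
Your proof is correct and follows the same overall strategy as the paper's: invoke \Cref{BrambleImpliesDisjointPaths} with $2k$ connecting paths per pair, classify each connecting path by a parity (the paper's ``agreeable/disagreeable'' dichotomy is exactly your $p(R)$), keep the majority parity class of size at least $k$ by pigeonhole, and then choose an orientation bit for each of the $\ell$ host paths so that at least half the pairs become compatible, losing the factor $2$ in both places. The one genuine difference is how that last step is executed: the paper builds an auxiliary graph $J$ from the $\ell$ paths plus one representative connecting path per pair and invokes \Cref{MainLemmaConstructBipartiteSubgraphs} from the companion paper (a generalisation of Erd\H{o}s's half-the-edges bipartite subgraph bound) to extract a bipartite subgraph retaining at least half the pairs, then $2$-colours it and lifts; you instead sample the bits $\epsilon_1,\dots,\epsilon_\ell$ uniformly and apply linearity of expectation directly, which is self-contained and avoids both the auxiliary-graph bookkeeping and the external lemma. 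Your parity congruence $\epsilon_i+\epsilon_j\equiv c_i(u)+c_j(v)+|R| \pmod 2$ is the right bookkeeping provided $|R|$ denotes the number of edges of $R$ (with vertices it is off by one), and your disjointness checks (the $P_i$ inherit disjointness from the $Q_i$; lifted connecting paths inherit internal disjointness because lifting preserves first coordinates) are the same ones the paper needs. The only thing worth stating explicitly is that a connecting path of the majority parity $\tilde p_{ij}$ has, for a compatible choice of $(\epsilon_i,\epsilon_j)$, a lift whose interior avoids $P_i\cup P_j$ and the other lifts for that pair, which follows from the corresponding property of the paths supplied by \Cref{BrambleImpliesDisjointPaths}.
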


\begin{proof}
	By \Cref{BrambleImpliesDisjointPaths}, $G$ contains $\ell$ disjoint paths $\tilde{P}_1,\dots,\tilde{P}_{\ell}$, and a set $\mathcal{Q}_{i,j}$ of $2k$ disjoint paths between $\tilde{P}_i$ and $\tilde{P}_j$ where $i,j \in [\ell]$ are distinct. Take a proper black-white colouring of $\tilde{P}_1,\dots,\tilde{P}_j$. For a path $Q=(v_1^{(i,j)},\dots,v_n^{(i,j)}) \in \mathcal{Q}_{i,j}$, we say that it is \emph{agreeable} if there exists a black-white colouring of $Q$ such that the colour of $v_1$ and $v_n$ corresponds with the colour they were previously assign. Otherwise, we say that $Q$ is \emph{disagreeable}. Observe that if we switch the black-white colouring of $\tilde{P}_j$, then every agreeable path becomes disagreeable and every disagreeable path becomes agreeable. If there are more agreeable paths than disagreeable, then let $\mathcal{M}_{i,j}$ be the set of agreeable paths, otherwise let $\mathcal{M}_{i,j}$ be the set of disagreeable path. By the pigeon-hole principle, $|\mathcal{M}_{i,j}|\geq k$. Note that the set $\mathcal{M}_{i,j}$ of paths are \emph{pairwise-agreeable}, in the sense that if one path in $\mathcal{M}_{i,j}$ is agreeable then all the paths in $\mathcal{M}_{i,j}$ are agreeable. 
	
	For each distinct $i,j \in [\ell]$, let $R_{i,j}=(v_1^{(i,j)},\dots,v_{n_{i,j}}^{(i,j)})$ be an arbitrary path in $\mathcal{M}_{i,j}$. We now define an auxiliary graph $J$ as follows. Let $J$ consist of the $\ell$-disjoint paths $\tilde{P}_1,\dots, \tilde{P}_\ell$. For each distinct $i,j \in [\ell]$, add a path $\tilde{R}_{i,j}$ from $v_1^{(i,j)} \in V(\tilde{P}_i)$ to $v_{n_{i,j}}^{(i,j)} \in V(\tilde{P}_j)$ of length $n_{i,j}$ that is internally disjoint from all other vertices in $J$. Let $\mathcal{R}=\{\tilde{R}_{i,j}: i,j \in [\ell], i \neq j\}$. By \Cref{MainLemmaConstructBipartiteSubgraphs}, there exists a set $\mathcal{S}\sse \mathcal{R}$ where $|\mathcal{S}|\geq |\mathcal{R}|/2$ such that $\tilde{J}=(\bigcup_{i \in [\ell]} \tilde{P}_i)\cup \mathcal{S}$ is bipartite. Let $X=\{ij:R_{i,j} \in \mathcal{S}\}$ and note that $|X|\geq |E(K_{\ell})|/2$. Now for all $ij \in X$, since the set of paths $\mathcal{M}_{i,j}$ are pairwise agreeable, it follows that for all $Q=(v_1,\dots,v_n) \in \mathcal{M}_{i,j}$, we can add a path $\hat{Q}$ of length $|Q|$ from $v_1 \in V(\tilde{P}_i)$ to $v_n \in V(\tilde{P}_j)$ to the graph $\tilde{J}$ that is internally disjoint to all other paths without compromising $\tilde{J}$ being bipartite. Do this for all paths in $\mathcal{M}_{i,j}$ that has not yet been considered whenever $ij \in X$ and let $\hat{J}$ be the bipartite graph obtained. Let $\phi: V(\hat{J}) \to [2]$ be a proper $2$-colouring of $\hat{J}$.
	
	Now consider $G \times K_2$. For each $i \in [\ell]$, let $P_i$ be the path in $G \times K_2$ induced by $\{(v,\phi(v)): v \in V(\tilde{P}_i)\}$. Since $\tilde{P}_{i}$ and $\tilde{P}_j$ are disjoint for distinct $i,j \in [\ell]$, it follows that $P_1\dots,P_\ell$ are $\ell$-disjoint paths in $G \times K_2$. It remains to show that for all distinct $ij \in X$, there exists $k$ disjoint paths from $P_i$ to $P_j$. Let $Q=(v_1^{(i,j)},\dots,v_n^{(i,j)}) \in \mathcal{M}_{i,j}$. By construction of $\hat{J}$, for each $v \in V(\hat{Q})$ there exists a corresponding $\hat{v}\in V(\tilde{Q})$. Let $Y$ be the path in $G \times K_2$ that is induced by $\{(v,\phi(\hat{v})): v \in V(Q)\}$ and let $\mathcal{Y}_{i,j}$ be the set of such paths. Since the set of paths $\mathcal{M}_{i,j}$ are internally-disjoint, it follows that the set of paths $\mathcal{Y}_{i,j}$ are also internally-disjoint. Moreover, $|\mathcal{Y}_{i,j}|=|\mathcal{M}_{i,j}|\geq k$ as required. 
\end{proof}

Let $d(\ell)$ be the minimum integer such that every graph with no $K_{\ell}$ minor is $d(\ell)$-degenerate. \citet{kostochka1982mean, kostochka1984average} and \citet{thomason1984contraction} independently proved that $d(\ell)\in \Theta(\ell\sqrt{\log(\ell)})$.

\begin{thm}[\cite{kostochka1984average,thomason1984contraction,thomason2001extremal}]\label{AvgDegForceMinor}
	Every graph that does not contain $K_{\ell}$ as a minor is $d(\ell)$-degenerate where $d(\ell):= c{\ell}\sqrt{\log(\ell)}$ and thus has average degree at most $2c{\ell}\sqrt{\log(\ell)}$ where $c$ is a positive constant.
\end{thm}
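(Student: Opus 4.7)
The plan is to prove the contrapositive: if $G$ has average degree at least $2c\ell\sqrt{\log \ell}$ for a sufficiently large absolute constant $c$, then $G$ contains $K_\ell$ as a minor (the degeneracy statement then follows immediately, since if $G$ were not $d(\ell)$-degenerate, it would have a subgraph of minimum degree $>d(\ell)$ and hence average degree $>d(\ell)$).

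The first step is a standard reduction: any graph with average degree $\geq 2d$ contains a subgraph of minimum degree $\geq d$ (iteratively delete vertices of degree below the average-degree threshold). So it suffices to prove that every graph $H$ with $\delta(H) \geq c\ell\sqrt{\log \ell}$ contains $K_\ell$ as a minor. The second step is to pass to a \emph{minor-minimal} such $H$: choose a minor $H$ of $G$ with $\delta(H) \geq d := c\ell\sqrt{\log \ell}$ and $|V(H)|$ minimum. Minimality forces a strong local density property: for every edge $uv \in E(H)$, contracting $uv$ must drop the minimum degree below $d$, which means the merged neighborhood $|N(u) \cup N(v)| - 1$ is less than $d$, i.e.\ $|N(u) \cap N(v)| \gtrsim d$ (up to lower order). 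Equivalently, every edge of $H$ lies in roughly $d$ triangles, so $H$ is locally extremely dense.

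The third step is the probabilistic heart of the argument. Randomly partition $V(H)$ into $\ell$ parts $V_1,\dots,V_\ell$ by assigning each vertex uniformly and independently. For a uniformly chosen vertex $v$, its $\geq d$ neighbors are distributed among $\ell$ parts, so the expected number of neighbors in each part is $\geq d/\ell \approx c\sqrt{\log \ell}$. Using Chernoff bounds together with the common-neighborhood property from Step 2, the plan is to show that with positive probability: (i) each induced subgraph $H[V_i]$ is connected (use a BFS-style argument exploiting that adjacent vertices share many neighbors, so ``local connectivity within a part'' survives the random split), and (ii) every pair $V_i, V_j$ is joined by at least one edge. The threshold $d/\ell = \Theta(\sqrt{\log \ell})$ is exactly what a union bound over the $\binom{\ell}{2}$ pairs can tolerate while keeping the failure probability below $1$.

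The hard part will be Step 3, and specifically the connectivity of each branch set $H[V_i]$. A naive bound loses a $\log \ell$ factor (giving the weaker $\ell \log \ell$ bound of earlier authors such as Mader); getting down to $\sqrt{\log \ell}$ is what Kostochka and Thomason achieved and requires careful exploitation of the minor-minimal structure — essentially showing that $H$ behaves pseudo-randomly enough that connectivity holds with overwhelming probability after a single random partition, rather than needing a two-stage process. The optimization balancing the ``edge between each pair'' requirement against the ``each part connected'' requirement is where the exponent $1/2$ in $\sqrt{\log \ell}$ emerges, and this balance is the delicate quantitative point of the proof.
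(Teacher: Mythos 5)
First, note that the paper offers no proof of this statement: it is quoted verbatim from the literature (Kostochka; Thomason), so there is nothing internal to compare against, and your sketch has to be judged against the classical arguments themselves.

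Your opening moves match those arguments: pass to the contrapositive, reduce average degree to minimum degree, take a minor-minimal dense graph $H$, and extract the key structural fact that every edge of $H$ lies in roughly $d$ triangles. (One technical slip there: minimality should be imposed on the edge density $\|H\|/|H|$ rather than on the minimum degree, because contracting $uv$ can push a \emph{common neighbour} of $u$ and $v$ below degree $d$ even when the merged vertex stays above it; with the density formulation the computation $|N(u)\cap N(v)|\geq d-1$ goes through cleanly.) The genuine gap is in Step 3. Partitioning all of $V(H)$ uniformly at random into $\ell$ parts and using the parts as branch sets does not work: there is no reason for $H[V_i]$ to be connected, and the triangle property does not rescue it. Concretely, let $H$ be a long chain of cliques of order $d+2$ in which consecutive cliques overlap in $d/2$ vertices; this has minimum degree at least $d$ and every edge in at least $d/2$ triangles, yet a part $V_i$ receiving a $1/\ell$ fraction of each overlap (about $\sqrt{\log\ell}$ vertices) fails to meet some overlap, and hence disconnects, with probability tending to $1$ as the chain length grows. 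The ``BFS-style argument'' you gesture at cannot close this, because local co-neighbourhood density says nothing about a sparse random sample remaining connected across long distances in $H$.

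What Kostochka and Thomason actually do is quite different at this point: the branch sets are \emph{small}, of order $\Theta(\sqrt{\log\ell})$ each, occupying only about $\ell\sqrt{\log\ell}$ of the vertices of $H$ in total. Thomason builds them by a random greedy process -- roughly, each branch set starts from a random edge (connected by construction, and dominating a constant fraction of $V(H)$ thanks to the triangle property) and is grown by a few random neighbours so that it dominates all but an $e^{-\Omega(s\cdot d/n)}$ fraction of the vertices; with $s=\Theta(\sqrt{\log\ell})$ and $d=\Theta(\ell\sqrt{\log\ell})$ the failure probability for a fixed pair drops below $\ell^{-2}$ and the union bound over $\binom{\ell}{2}$ pairs closes the argument. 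Kostochka's proof instead extracts small dense minors inductively. Your identification of where the exponent $1/2$ comes from (balancing per-pair failure probability against the union bound over pairs) is correct, but the construction you propose to realise that balance would fail as written.
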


The proof for the following theorem is a simple adaption of the proof of \Cref{MainGridLikeMinors} in \cite{reed2012polynomial}.

\begin{thm}\label{DirectKTwoGridLikeMinor}
	For every graph $G$ with treewidth at least $c\ell^4\log^{5/2}(\ell)$, $G \times K_2$ contains a grid-like-minor of order $\ell$ for some constant $c$.
\end{thm}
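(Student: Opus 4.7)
The plan is to mimic Reed and Wood's proof of \Cref{MainGridLikeMinors}, substituting \Cref{BrambleImpliesDisjointPathsKTwo} for \Cref{BrambleImpliesDisjointPaths} at the single place where the latter is invoked. Choose integer parameters $k$ and $\ell'$ according to the Reed--Wood parameter selection (scaled up by suitable constants to absorb the factor-$2$ losses in \Cref{BrambleImpliesDisjointPathsKTwo}) so that $2k\ell' \leq c\ell^4\log^{5/2}(\ell)$ for a suitable absolute constant $c$, while also ensuring $\ell' = \Omega(d(\ell))$ and $k$ is sufficiently large as required by their argument, where $d(\ell) = \Theta(\ell\sqrt{\log \ell})$ is the Kostochka--Thomason bound from \Cref{AvgDegForceMinor}.

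With these parameters, apply \Cref{BrambleImpliesDisjointPathsKTwo} to $G$. Since $\tw(G) \geq 2k\ell'-1$, this yields $\ell'$ vertex-disjoint paths $P_1,\dots,P_{\ell'}$ in $G \times K_2$ and a set $X\subseteq E(K_{\ell'})$ with $|X|\geq|E(K_{\ell'})|/2$ such that, for each $\{i,j\}\in X$, there are $k$ internally disjoint paths $Q^{(1)}_{ij},\dots,Q^{(k)}_{ij}$ in $G\times K_2$ joining $P_i$ and $P_j$. From this point, Reed and Wood's argument applies almost verbatim: from the $P_i$'s and the $Q$-paths one extracts a subfamily $\mathcal{P}^{\star}$ whose intersection graph $I(\mathcal{P}^{\star})$ is bipartite and has average degree at least $2d(\ell)$, so by \Cref{AvgDegForceMinor} this intersection graph contains $K_\ell$ as a minor. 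Hence $\mathcal{P}^{\star}$ realises a grid-like-minor of order $\ell$ in $G \times K_2$, as required. The extraction step itself is the averaging argument from \cite{reed2012polynomial} that discards the (bounded fraction of) connecting paths that either pass through some extra $P_m$ or cross too many other connecting paths.

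The main obstacle is purely one of bookkeeping: one must track how the two factor-$2$ losses in \Cref{BrambleImpliesDisjointPathsKTwo} (the treewidth requirement $2k\ell'-1$ rather than $k\ell'-1$, and the restriction to the set $X$ which contains only half of the pairs $\binom{[\ell']}{2}$) propagate through Reed and Wood's parameter choices. Since both losses are constant multiplicative factors, they are comfortably absorbed by the polylogarithmic slack between our bound $c\ell^4\log^{5/2}(\ell)$ and Reed and Wood's original $c\ell^4\sqrt{\log\ell}$ bound, and no essentially new combinatorial idea is needed beyond the bipartite-subgraph-preserving bramble-to-paths lemma (\Cref{BrambleImpliesDisjointPathsKTwo}) established earlier using \Cref{MainLemmaConstructBipartiteSubgraphs}.
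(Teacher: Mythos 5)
Your proposal follows essentially the same route as the paper's proof: substitute \Cref{BrambleImpliesDisjointPathsKTwo} for \Cref{BrambleImpliesDisjointPaths} in the Reed--Wood argument and inflate the number of paths to $\ell'=\Theta(\ell\sqrt{\log\ell})$ so that the pattern graph on the surviving half of the pairs still forces a $K_\ell$ minor; the paper does exactly this with $t=\lceil 2(2c\ell\sqrt{\log(\ell)}+1)\rceil$ and $k=\lceil 4e\binom{t}{2}d(t)\rceil$, then runs the rest of \cite[Theorem 1.2]{reed2012polynomial} verbatim. One correction to your summary of the imported argument: the intersection graph $I(\mathcal{P}^{\star})$ does \emph{not} have average degree $2d(\ell)$ (being essentially a $1$-subdivision, its average degree is below $4$); rather it contains the $1$-subdivision of the pattern graph $J$ with vertex set $[\ell']$ and edge set $X$, and it is $J$ whose average degree at least $(\ell'-1)/2\geq 2d(\ell)$ yields the $K_\ell$ minor via \Cref{AvgDegForceMinor}, which the $1$-subdivision then inherits. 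Relatedly, the loss of half the pairs is not a ``constant factor absorbed by polylogarithmic slack'': it is exactly what forces $\ell'$ up from $\ell$ to $\Theta(d(\ell))$ and thereby produces the extra $\log^{2}(\ell)$ over Reed and Wood's original bound --- your requirement $\ell'=\Omega(d(\ell))$ already encodes this, so the substance of the proof is correct.
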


\begin{proof}
	Let $t:=\ceil{2(2c\ell\sqrt{\log(\ell)}+1)}$ and $k:=\ceil{4e\binom{t}{2}d(t)}$ where $c$ is specified by \Cref{AvgDegForceMinor}. Let $G$ be a graph with treewidth at least $c\ell^4\log^{5/2}(\ell)$ which is at least $2kt-1$ for an appropriate value of $c$. By \Cref{BrambleImpliesDisjointPathsKTwo}, there exists a set $X \sse E(K_{t})$ where $|X|\geq |E(K_{t})|/2$ such that $G \times K_2$ contains $t$-disjoint paths $P_1, \dots, P_{t}$ and for all $ij \in X$, $G \times K_2$ contains a set $\mathcal{Q}_{i,j}$ of $k$-disjoint paths between $P_i$ and $P_j$. Let $J$ be the subgraph of $K_t$ with vertex set $V(K_t)$ and edge set $X$. By \Cref{AvgDegForceMinor}, $J$ contains $K_{\ell}$ as a minor. From here on in, the rest of the proof follows \cite[Theorem 1.2]{reed2012polynomial}.
	
	For distinct $ij,ab \in X$, let $H_{i,j,a,b}$ be the intersection graph of $\mathcal{Q}_{i,j} \cup \mathcal{Q}_{a,b}$. Since $H_{i,j,a,b}$ is bipartite, if $K_{t}$ is a minor of $H_{i,j,a,b}$ then we are done. So assume that $K_{t}$ is not a minor of $H_{i,j,a,b}$. By \Cref{AvgDegForceMinor}, $H_{i,j,a,b}$ is $d(t)$-degenerate.
	Let $H$ be the intersection graph of $\cup\{\mathcal{Q}_{i,j}:ij \in X\}$. Then $H$ is $|X|$-colourable where each colour class is some $\mathcal{Q}_{i,j}$. Since each colour class has $k$ vertices, and each pair of colour-classes in $H$ induce a $d(t)$-degenerate subgraph, then by \cite[Lemma~4.3]{reed2012polynomial}, $H$ has an independent set with one vertex from each colour class. That is, in each set $\mathcal{Q}_{i,j}$ there is a path $Q_{i,j}$ such that $Q_{i,j}\cap Q_{a,b}=\emptyset$ for distinct $ij,ab \in X$. Consider the set of paths $\Path:=\{P_i:i \in [t]\} \cup \{Q_{i,j}:ij \in X\}.$
	The intersection graph of $\Path$ is bipartite and contains the $1$-subdivision of $J$. Since $J$ contains $K_{\ell}$ as a minor, $\Path$ is a grid-like-minor of order $\ell$ in $G$.
\end{proof}

\citet{reed2012polynomial} showed that the treewidth of a grid-like-minor of order $\ell$ is at least $\floor{\ell/2}-1$. As such, \Cref{DirectKTwoGridLikeMinor} implies \Cref{TwDirectKTwoPolynomial}, and $\tw(G)$ and $\tw(G \times K_2)$ are polynomially tied. 

We now show that \Cref{TwDirectKTwoPolynomial} implies a polynomial lower bound for the pathwidth of $G \times K_2$. \citet{groenland2021approximating} proved the following.
\begin{thm}[\cite{groenland2021approximating}]\label{LargeCBTPathwidth}
	For all integers $t,h\geq 1$, for every graph $G$ with $\tw(G)\geq t-1$, either $\pw(G)\leq th+1$ or $G$ contains a subdivision of a complete binary tree of height $h$.
\end{thm}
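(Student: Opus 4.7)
I would prove the contrapositive by induction on $h$: if $\tw(G) \ge t-1$ and $G$ contains no subdivision of a complete binary tree of height $h$, then $\pw(G) \le th+1$. First reduce to connected $G$ (both pathwidth and the presence of a tree subdivision are determined by a single component). Fix a tree-decomposition $(T,\mathcal{B})$ of $G$ of width $w \le \tw(G)$; after standard smoothing, assume $T$ is subcubic and adjacent bags differ by at most one vertex. A natural base case is $h=1$, where a subdivision of $T_1$ is a vertex of degree $\ge 2$ joined by two internally disjoint paths to two other vertices: if $G$ avoids this, every block of $G$ is a single edge, so $\pw(G) \le 1 \le t+1$.

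For the inductive step, the plan is to dichotomize based on the branching structure of $T$. Call a node $x \in V(T)$ \emph{essentially branching} if, with $x$ as root, at least three of the subtrees hanging off $x$ contain bags whose union carries a subgraph of treewidth at least $t-1$ (equivalently, whose removal from $G$, after deleting $B_x$, leaves three components each themselves rich enough to recurse). If no such $x$ exists, then every node of $T$ has at most two ``heavy'' directions, and one can linearize $T$ into a caterpillar-like ordering so that $(T,\mathcal{B})$ is converted into a path-decomposition of width at most $(t-1) + t(h-1) + \mathrm{const} \le th+1$: the extra slack per level comes from carrying along at most $w+1 \le t$ additional vertices to cover the ``light'' detours.

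If an essentially branching node $x$ exists, then in each of three subtrees $T_1, T_2, T_3$ hanging off $x$, the subgraph induced by $\bigcup_{y \in T_i} B_y$ still has treewidth $\ge t-1$. By the inductive hypothesis, each such subgraph either has pathwidth at most $t(h-1)+1$ or contains a subdivision of $T_{h-1}$. If all three have small pathwidth, then gluing the three path-decompositions together at $x$ and adding $B_x$ to every bag yields a path-decomposition of $G$ of width at most $t(h-1)+1 + (w+1) \le th+1$, giving the conclusion. Otherwise, at least three subtrees yield pairwise vertex-disjoint $T_{h-1}$ subdivisions (by pigeonhole and a careful choice of $x$), and since $G$ is connected through $B_x$ one can find three internally disjoint paths from the three roots of these subdivisions to a common vertex in $B_x$, producing a subdivision of $T_h$.

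The main obstacle is the simultaneous accounting of both horns of the dichotomy against the single bound $th+1$: the definition of ``essentially branching'' must be weak enough that in its absence one truly recovers a path-decomposition of width $th+1$, yet strong enough that when it is satisfied, three \emph{disjoint} $T_{h-1}$ subdivisions can be extracted and linked through a bounded separator. I expect the cleanest route is to define a potential function on partial tree-decompositions (e.g.\ the total bag-weighted measure of ``unexplained'' subtrees), prove that each inductive step either decreases the potential or produces the desired tree subdivision, and read off the $th+1$ bound from the total potential consumed over $h$ levels of recursion.
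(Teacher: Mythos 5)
The paper offers no proof of this statement to compare against: it is quoted (with what appears to be a sign typo, $\tw(G)\ge t-1$ where the source has $\tw(G)\le t-1$) from Groenland, Joret, Nadara and Walczak, and is the main theorem of that separate paper. Your proposal implicitly works with the corrected hypothesis $\tw(G)\le t-1$ (you use $w+1\le t$), which is the right reading, and your base case and your observation that two disjoint $T_{h-1}$ subdivisions linked through a common vertex yield a $T_h$ subdivision are both sound.

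As a proof, however, the proposal has genuine gaps rather than missing routine details. First, the dichotomy in the inductive step does not close: the negation of ``all three heavy subtrees have pathwidth at most $t(h-1)+1$'' is that \emph{at least one} of them contains a subdivision of $T_{h-1}$, not that three (or even two) of them yield pairwise disjoint such subdivisions; the parenthetical ``by pigeonhole and a careful choice of $x$'' is precisely the hard content of the theorem and is not supplied. The mixed case---one subtree forces a $T_{h-1}$ subdivision but no second disjoint one is available to link to it---is exactly where the argument must do work, and your sketch gives no mechanism for it. Second, in the ``no essentially branching node'' case the light subtrees are only known to induce subgraphs of treewidth less than $t-1$, which bounds neither their pathwidth nor their order, so the claimed width $(t-1)+t(h-1)+\mathrm{const}$ has no derivation and nothing in that branch makes $h$ decrease. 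Third, in the ``all three small pathwidth'' case you glue three path-decompositions through $B_x$, but $G$ is not the union of those three subtrees together with $B_x$, and the rest of the tree-decomposition is unaccounted for. Your closing paragraph concedes all of this by deferring to an unspecified potential function; what you have is a plausible plan of attack, not a proof, and the actual argument of Groenland et al.\ is substantially more delicate.
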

It is well-known that the pathwidth of a subdivision of a complete binary tree with height $2k$ is at least $k$ (see \cite{scheffler1989die}). Therefore, \Cref{LargeCBTPathwidth,TwDirectKTwoPolynomial} imply the following.

\begin{thm}\label{PwDirectKTwoPolynomial}
	There exists some positive constant $c$ such that $\pw(G \times K_2)\geq k$ for every graph $G$ with $\pw(G)\geq ck^5\log^{5/2}(k)$.
\end{thm}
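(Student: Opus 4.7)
The plan is to derive \Cref{PwDirectKTwoPolynomial} from \Cref{TwDirectKTwoPolynomial} and \Cref{LargeCBTPathwidth} via a dichotomy: either $\tw(G)$ is already large enough to invoke \Cref{TwDirectKTwoPolynomial} directly, or $G$ contains a subdivision of a tall complete binary tree, in which case the bipartiteness of trees takes over.

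The key preliminary observation is that for any bipartite graph $T$ with bipartition $(A,B)$, the product $T \times K_2$ is isomorphic to the disjoint union of two copies of $T$. Indeed, the sets $U_1 := (A \times \{1\}) \cup (B \times \{2\})$ and $U_2 := (A \times \{2\}) \cup (B \times \{1\})$ partition $V(T \times K_2)$, each induces a copy of $T$, and no edge of $T \times K_2$ crosses between them, because every such edge switches the $K_2$-coordinate while preserving the side of the bipartition of $T$. Hence $\pw(T \times K_2) = \pw(T)$.

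Now suppose $\pw(G) \geq ck^5 \log^{5/2}(k)$ for a constant $c$ to be fixed. Let $c_0$ be the constant from \Cref{TwDirectKTwoPolynomial}. I would apply \Cref{LargeCBTPathwidth} with $h := 2k$ and $t := \tw(G) + 1$ (so that $\tw(G) \geq t-1$ trivially holds). The dichotomy splits into two cases. First, if $G$ contains a subdivision $T'$ of a complete binary tree of height $2k$, then $T'$ is a tree, hence bipartite, so by the observation $\pw(T' \times K_2) = \pw(T')$, which is at least $k$ by Scheffler's lower bound. Since $T' \times K_2 \subseteq G \times K_2$ and pathwidth is subgraph-monotone, $\pw(G \times K_2) \geq k$. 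Second, if $\pw(G) \leq th + 1 = 2k(\tw(G)+1) + 1$, then rearranging gives $\tw(G) \geq \tfrac{\pw(G)-1}{2k} - 1 \geq c_0 k^4 \log^{5/2}(k)$ provided $c$ is taken large enough, say $c = 3c_0$. Then \Cref{TwDirectKTwoPolynomial} yields $\tw(G \times K_2) \geq k$, so a fortiori $\pw(G \times K_2) \geq k$.

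The only work beyond citing the two named theorems is the bipartite direct-product observation and the elementary constant bookkeeping in the second case; I expect no real obstacle, since the bipartite case of the dichotomy is exactly the scenario where \Cref{TwDirectKTwoPolynomial} would be too weak, and fortunately it is also the scenario in which the subdivision is automatically bipartite, so the pathwidth is preserved under $\,\cdot \times K_2$ almost for free.
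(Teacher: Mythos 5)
Your proof is correct and takes essentially the same route the paper intends: the Groenland et al.\ dichotomy (\Cref{LargeCBTPathwidth}), Scheffler's pathwidth bound for subdivided complete binary trees, the observation that a bipartite subgraph of $G$ embeds in $G\times K_2$, and \Cref{TwDirectKTwoPolynomial} combined with $\pw\geq\tw$ in the remaining case. One cosmetic slip: an edge of $T\times K_2$ switches both the $K_2$-coordinate \emph{and} the side of the bipartition of $T$ (which is exactly why it stays inside $U_1$ or $U_2$), so your stated reason ``preserving the side of the bipartition'' is backwards, though the sets $U_1,U_2$ and the conclusion are right.
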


By \Cref{twStrongProduct,PwDirectKTwoPolynomial}, we conclude that $\pw(G)$ and $\pw(G \times K_2)$ are polynomially tied.

\subsection{Direct Product}\label{SectionDirectProduct}
In this subsection, we characterise when the direct product of two classes of graphs has bounded treewidth (\Cref{twDirectProductGeneralGraphs}) and when it has bounded pathwidth (\Cref{pwDirectProductGeneralGraphs}). We begin with some definitions. For every integer $k \geq 0$, the $k$-daddy-longlegs $W^{(k)}$ is the tree with $V(W^{(k)})=\{r,u_1,\dots,u_k,v_1,\dots,v_k\}$ and $E(W^{(k)})=\{ru_i,u_iv_i: i \in [k]\}$; see \Cref{fig:kclaw}. Let $G$ be a graph. The \emph{daddy-longlegs number of $G$}, $\dll(G)$, is the maximum integer $k\geq 0$ such that $W^{(k)}$ is a minor of $G$. The \emph{path number of $G$}, $\pathnumber(G)$, is the maximum integer $n\geq 0$ such that $G$ contains a path on $n$ vertices. Clearly the path number of a graph is at least its diameter. A set $A\sse V(G)$ is a \emph{vertex cover} of $G$ if $G-A$ is an independent set. The \emph{vertex cover number of $G$}, $\tau(G)$, is the minimum size of a vertex cover of $G$.

 \begin{figure}[!htb]
	\begin{center}
		\includegraphics[width=0.2\linewidth]{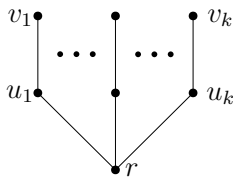}
		\caption{The $k$-daddy-longlegs $W^{(k)}$.}
		\label{fig:kclaw}
	\end{center}
\end{figure}

We now work towards showing that graphs with bounded daddy-longlegs number and bounded path-number have bounded vertex cover number. We begin with some basic lemmas.
 
\begin{lem}\label{TreeLeavesDiameter}
	For all integers $j,n\geq 1$, if a tree $T$ has at most $j$ leaves, then $\textsf{v}(T)\leq \ceil{\frac{j}{2}\pathnumber(T)}$.
\end{lem}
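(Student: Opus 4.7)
The plan is to prove the bound by an averaging argument on distances from leaves to an appropriately chosen center of $T$. Set $p := \pathnumber(T)$. Since increasing $j$ only weakens the claimed inequality, I may assume $T$ has exactly $j$ leaves; the case $\textsf{v}(T)=1$ (so $T=K_1$) is immediate, so I take $j \geq 2$ throughout.

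The key identity I would exploit is that for any choice of root $v^* \in V(T)$,
\[
\sum_{u \text{ leaf of } T} d_T(u, v^*) \;=\; \sum_{e \in E(T)} L_e \;\geq\; |E(T)| \;=\; \textsf{v}(T) - 1,
\]
where $L_e$ counts the leaves of $T$ in the subtree of $T$ hanging below the edge $e$ when $T$ is rooted at $v^*$; the inequality holds because every nonempty rooted subtree contains at least one leaf. Combined with any uniform upper bound on $d_T(u, v^*)$ over the leaves, this yields an upper bound on $\textsf{v}(T)$.

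If $p$ is odd, $T$ has a unique center $v^*$, and every leaf lies within distance $(p-1)/2$ of it. The identity above then gives $\textsf{v}(T) - 1 \leq j(p-1)/2$, so $\textsf{v}(T) \leq (jp - j + 2)/2 \leq \lceil jp/2 \rceil$ whenever $j \geq 2$.

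The main obstacle is the case $p$ even, where the radius of $T$ equals $p/2$, so a single-center argument only delivers $\textsf{v}(T) \leq jp/2 + 1$, losing one unit. To save this unit I would use the bi-center: $T$ has an edge $e^* = xy$ whose deletion splits $T$ into subtrees $T_x \ni x$ and $T_y \ni y$. The crucial sublemma is that the eccentricity of $x$ inside $T_x$ is at most $p/2 - 1$ (and symmetrically for $y$): otherwise a vertex of $T_x$ at distance $\geq p/2$ from $x$, concatenated with $e^*$ and with the path inside $T_y$ from $y$ to the far endpoint of the diameter (itself at distance $p/2-1$ from $y$), would produce a path on more than $p$ vertices, contradicting $\pathnumber(T)=p$. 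Applying the averaging identity separately inside $T_x$ and $T_y$, with root $x$ and $y$ respectively and with leaf sets $L_x$ and $L_y$ that partition the $j$ leaves of $T$, and then summing yields $\textsf{v}(T) = \textsf{v}(T_x) + \textsf{v}(T_y) \leq j(p/2 - 1) + 2 \leq jp/2 = \lceil jp/2 \rceil$ for $j \geq 2$, finishing the proof.
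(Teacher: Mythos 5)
Your proof is correct, but it takes a genuinely different route from the paper. The paper argues by induction on an upper bound $n$ for $\pathnumber(T)$: deleting all leaves of $T$ yields a tree $T_L$ with at most $j$ leaves and $\pathnumber(T_L)\leq n-2$ (a longest path of $T_L$ extends at both ends into $T$), so $\textsf{v}(T)\leq \textsf{v}(T_L)+j\leq\ceil{j(n-2)/2}+j=\ceil{jn/2}$ --- a three-line induction with no case analysis. You instead give a direct double-counting argument, $\sum_{u}d_T(u,v^\star)=\sum_e L_e\geq \textsf{v}(T)-1$, combined with the fact that the central vertex (odd $\pathnumber$) or the two endpoints of the central edge (even $\pathnumber$) have eccentricity $(p-1)/2$, respectively $p/2-1$, in their parts; all the steps check out, including the verification that every edge of $T_x$ has a leaf of $T$ below it when rooted at $x$, and the parity bookkeeping that recovers the ceiling. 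The paper's leaf-stripping induction is shorter and avoids the even/odd split, which is why it is the natural proof here; your averaging argument is non-inductive, makes the extremal examples (spiders with equal-length legs) transparent, and actually yields the marginally sharper bounds $\textsf{v}(T)\leq j(p-1)/2+1$ for odd $p$ and $\textsf{v}(T)\leq j(p-2)/2+2$ for even $p$, though this extra precision is not needed for the application in \cref{TreewidthConditions}.
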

\begin{proof}
	We proceed by induction on $n$. For $n=1$, the claim holds trivially. Now suppose the claims holds up to $n-1$. Let $T$ be a tree with at most $j$ leaves and $\pathnumber(T)\leq n$. Let $T_L$ be the subtree of $T$ obtained by deleting its leaves. Then $T_L$ has at most $j$ leaves.  If $T$ contains a path $P$ with $n$ vertices, then the endpoints of $P$ are leaves in $T$. If $T$ contains a path $P$ with $n-1$ vertices, then at least one of the endpoints of $T$ is a leaf. As such, $\pathnumber(T_L)\leq n-2$. By induction, $\textsf{v}(T_L)\leq \ceil{j(n-2)/2}$. Since at most $j$ vertices were deleted from $T$ to obtain $T_L$, we have $\textsf{v}(T)\leq \textsf{v}(T_L)+j\leq \ceil{jn/2}$ as required.
\end{proof}

The next lemma is important for our upcoming characterisations.
\begin{lem}\label{TreewidthConditions}
	For every connected graph $G$, $\tau(G)\leq \ceil{(\dll(G)+1)\, \pathnumber(G)/2}.$
\end{lem}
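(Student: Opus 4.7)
The plan is to exhibit a subtree $T$ of $G$ whose vertex set is a vertex cover of $G$ and which has at most $\dll(G)+1$ leaves. Since $T\sse G$, we have $\pathnumber(T)\le\pathnumber(G)$, so applying \Cref{TreeLeavesDiameter} with $j=\dll(G)+1$ gives $|V(T)|\le\ceil{(\dll(G)+1)\pathnumber(G)/2}$. Combined with $\tau(G)\le|V(T)|$, this proves the lemma.

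To construct $T$, I would choose a subtree of $G$ with $V(T)$ a vertex cover of $G$, minimising $|V(T)|$, with ties broken to minimise the number of $G$-edges between pairs of leaves of $T$. Such a $T$ exists because any spanning tree of $G$ qualifies. By the first minimality, every leaf $v$ of $T$ has at least one neighbour $w \in V(G)\setminus V(T)$ (otherwise $T-v$ would be a smaller subtree whose vertex set still covers $G$); write $W_v := N_G(v)\setminus V(T)$, so $W_v \neq \emptyset$. The tie-breaking is enforced by a local swap: whenever leaves $v_i,v_j$ of $T$ are $G$-adjacent and the tree-parent $p_i$ of $v_i$ has degree at least $3$ in $T$, replacing $p_iv_i\in E(T)$ by $v_iv_j$ yields a subtree with the same vertex set but one fewer leaf, contradicting minimality of $|V(T)|$-ties (so this situation does not occur).

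The heart of the argument is showing that $T$ has at most $\dll(G)+1$ leaves. Suppose for contradiction that $T$ has $L\ge\dll(G)+2$ leaves $v_1,\dots,v_L$, and consider the bipartite graph $B$ on $\{v_1,\dots,v_L\}\cup\bigcup_iW_{v_i}$ with the $G$-edges between the two sides. If $B$ admits a matching of size $\dll(G)+1$, say $\{(v_{i_j},w_j)\}$ with all $w_j$ distinct, then $L\ge 3$ forces $|V(T)|>L$, so the internal vertex set $R$ of $T$ is nonempty and induces a connected subtree; the branch sets $R$, $\{v_{i_j}\}$, and $\{w_j\}$ then realise $W^{(\dll(G)+1)}$ as a minor of $G$, contradicting the definition of $\dll(G)$. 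Otherwise, by K\"onig's theorem $B$ admits a vertex cover $Y_L\cup Y_O$ of size at most $\dll(G)$, with $Y_L$ among the leaves and $Y_O$ among the outside vertices, and every leaf $v\notin Y_L$ satisfies $W_v\sse Y_O$.

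The main obstacle is this second case. The plan is to replace the leaves outside $Y_L$ by $Y_O$ in $V(T)$, yielding a candidate vertex set $X:=(V(T)\setminus(\{v_1,\dots,v_L\}\setminus Y_L))\cup Y_O$ of size at most $|V(T)|-L+\dll(G)<|V(T)|$ that still covers $G$ (using the tie-breaking to ensure that no leaf--leaf $G$-edge is dropped), and then to extend $X$ to a subtree of $G$ by adjoining bridging vertices drawn from the removed leaves while keeping the total size strictly below $|V(T)|$, contradicting the minimality of $|V(T)|$. The hard part is carrying out this reconnection under the size bound; it requires sharing bridging vertices across the vertices of $Y_O$, using the connectedness of $G$ and the slack coming from the strict inequality $L\ge\dll(G)+2$.
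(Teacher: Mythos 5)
Your overall skeleton is the same as the paper's: produce a subtree of $G$ whose vertex set is a vertex cover and which has at most $\dll(G)+1$ leaves, then apply \cref{TreeLeavesDiameter}. But the construction of that subtree is where your argument has a genuine gap, and you flag it yourself: the second case of your leaf-counting argument (no large matching, so K\"onig gives a small vertex cover $Y_L\cup Y_O$ of the auxiliary bipartite graph) ends with a ``plan'' to swap the discarded leaves for $Y_O$ and then reconnect, and the reconnection is never carried out. The arithmetic does not obviously close: deleting the $L-|Y_L|$ leaves outside $Y_L$ and adding $Y_O$ saves only $L-|Y_L|-|Y_O|\geq L-\dll(G)\geq 2$ vertices, but each $w\in Y_O$ is a priori attached to $X$ only through a leaf that you have just deleted, so reconnecting may require reinstating up to $|Y_O|$ bridging vertices (plus more if those bridges do not make $X$ into a single tree), and $|Y_O|$ can be as large as $\dll(G)$. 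The two-vertex slack is not enough to pay for this. There are also smaller soft spots (the tie-breaking swap only handles adjacent leaves whose parent has tree-degree at least $3$, and ``one fewer leaf'' does not by itself contradict either the primary or the secondary minimality criterion), but the unfinished reconnection is the step that would need a real idea.

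For comparison, the paper sidesteps the extremal/exchange argument entirely by taking $T$ to be a \emph{DFS} spanning tree. Since every non-tree edge of $G$ joins an ancestor to a descendant, the leaves of a DFS tree form an independent set, so deleting them leaves a connected tree $T_L$ whose vertex set is automatically a vertex cover --- no minimality or swapping needed. Each leaf of $T_L$ other than the root then has a private pendant former leaf of $T$ hanging off it, which immediately yields a daddy-longlegs minor (root branch set $T_L$ minus its leaves, legs given by the leaf/pendant pairs) and hence bounds the number of leaves of $T_L$ by $\dll(G)+1$; \cref{TreeLeavesDiameter} finishes as in your outline. If you want to salvage your approach, replacing ``minimum subtree covering $G$'' by ``DFS spanning tree minus its leaves'' is exactly the missing ingredient.
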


\begin{proof}
	Let $T$ be a DFS spanning tree of $G$ rooted at some $v \in V(G)$. Note that $v$ may be a leaf vertex in $T$. Let $L$ be the set of leaves of $T$ excluding $v$ and let $T_L:=T-L$. Since $T$ is a DFS tree, $L$ is an independent set, and hence $V(T_L)$ is a vertex cover of $G$. We now bound $\textsf{v}(T_L)$. Let $\tilde{L}:=\{\tilde{u}_1,\dots,\tilde{u}_{j}\}$ be the leaves in $T_L$ excluding $v$. If $j=0$ then $V(T_L)=\{v\}$ and we are done. Otherwise, for every $i \in [j]$, let $\tilde{v}_i\in L$ where $\tilde{u}_i\tilde{v}_i \in E(T)$. Note that $\tilde{v}_i\neq \tilde{v}_j$ whenever $i \neq j$. Let $T_r:=T_L-\tilde{L}$ and let $\mu$ be a model of $W^{(j)}$ in $G$ defined by $\mu(r)=T_r$, $\mu(u_i)=\tilde{u}_i$ and $\mu(v_i)=\tilde{v}_i$ for all $i \in [j]$. Then $W^{(j)}$ is a minor of $G$ and hence $j \leq \dll(G)$. Thus $T_L$ has at most $j+1$ leaves. Since $\pathnumber(T_L)\leq \pathnumber(G)$, by \Cref{TreeLeavesDiameter}, $\textsf{v}(T_L)\leq \ceil{(\dll(G)+1)\pathnumber(G)/2}$ as required.
\end{proof}


We now present several lemmas for direct products in the general framework of $H$-decompositions. The motivation for doing so is that once results are established within this framework, we can quickly deduce bounds for both treewidth and pathwidth. As such, this framework provides a unified approach for which we can tackle both parameters at once. 

The \emph{square} of a graph $G$, denoted $G^2$, is the graph with $V(G^2)=V(G)$ where $uv \in E(G^2)$ if $\dist_G(u,v)\leq 2$. For our purposes, the key property of this graph is that $N_G[v]$ is a clique in $G^2$ for every vertex $v \in V(G)$. The next basic lemma concerning $G^2$ is useful.
\begin{lem}\label{HDecompGSquared}
	For all graphs $G$ and $H$ where $G$ has maximum degree $\Delta$ and has an $H$-decomposition with width at most $k$, $G^2$ has an $H$-decomposition with width at most $(k+1)(\Delta+1)-1$.
\end{lem}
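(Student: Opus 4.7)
The plan is to build the desired $H$-decomposition of $G^2$ by inflating each bag of the given $H$-decomposition of $G$ by closed neighbourhoods. Concretely, let $(H,\W)$ be an $H$-decomposition of $G$ with width at most $k$, and define
\[
	B_t := \bigcup_{v \in W_t} N_G[v] \qquad \text{for each } t \in V(H).
\]
I will verify that $(H,\B)$ with $\B = \{B_t : t \in V(H)\}$ is an $H$-decomposition of $G^2$ of the claimed width.

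The width bound is immediate: since $|W_t| \le k+1$ and each $|N_G[v]| \le \Delta+1$, we have $|B_t| \le (k+1)(\Delta+1)$. The edge-coverage condition is also easy: every edge $uv \in E(G^2)$ satisfies $u,v \in N_G[w]$ for some $w$ (take $w=u$ if $uv \in E(G)$, otherwise take $w$ to be a common neighbour in $G$); choosing any $t$ with $w \in W_t$ gives $u,v \in B_t$.

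The main step is the connectivity condition. Fix a vertex $u \in V(G)$. Note that
\[
	\{t \in V(H) : u \in B_t\} = \{t : W_t \cap N_G[u] \ne \emptyset\} = \bigcup_{v \in N_G[u]} T_v,
\]
where $T_v := \{t \in V(H) : v \in W_t\}$. Each $H[T_v]$ is non-empty and connected by the assumption on $(H,\W)$. Moreover, for every $v \in N_G(u)$, the edge $uv$ lies in some bag, so $T_u \cap T_v \ne \emptyset$. Hence each $H[T_v]$ with $v \in N_G[u]$ shares a vertex with the connected subgraph $H[T_u]$, and therefore the union $H\bigl[\bigcup_{v \in N_G[u]} T_v\bigr]$ is connected. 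This completes the verification.

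I expect no real obstacle here; the only subtlety is making sure that the connectivity of $H[T_v]$ for individual $v \in N_G(u)$ glues together correctly via $T_u$, which is where the edge-coverage property of the original decomposition is used.
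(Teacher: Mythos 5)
Your proposal is correct and follows essentially the same approach as the paper: the same bag inflation $B_t := \bigcup_{v \in W_t} N_G[v]$, the same width and edge-coverage checks, and the same connectivity argument gluing the subtrees $H[T_v]$ for $v \in N_G[u]$ onto $H[T_u]$ via the edge-coverage of the original decomposition.
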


\begin{proof}
	Let $(H,\W)$ be an $H$-decomposition of $G$ with width at most $k$. For all $t \in V(H)$, let $B_t:=\bigcup_{v \in W_t} N_G[v]$ and let $\B:=\{B_t:t\in V(H)\}$. We claim that $(H,\B)$ is an $H$-decomposition of $G^2$ with width at most $(k+1)(\Delta+1)-1$. Note that for all $v \in V(G^2)$ there exists a node $t_1 \in V(H)$ such that $v \in W_{t_1}$. By construction, $W_{t_1} \sse B_{t_1}$ and hence $v \in B_{t_1}$. Hence every vertex in $G^2$ is in a bag. Let $uv \in E(G^2)$. Now if $uv$ is also an edge in $E(G)$ then there exists a node $t_2 \in V(H)$ such that $\{u,v\}\sse W_{t_2}$ and hence $\{u,v\}\sse B_{t_2}$. Otherwise, $\dist_G(u,v)=2$ and hence $u$ and $v$ share a common neighbour $y \in V(G)$ in $G$. Now since $(H,\W)$ is an $H$-decomposition, there exists a node $t_3 \in V(H)$ such that $y \in W_{t_3}$. So by construction, $\{u,v\}\sse N_G[y]\sse B_{t_3}$, and hence the endpoints of each edge in $G^2$ is in a bag. 
	
	It remains to show that for all $v \in V(G^2)$, the induced subgraph $H[\{t:v\in B_t\}]$ is connected. For all $v \in V(G^2)$, let $H^{(v)}:=H[\{t:v\in W_t\}]$. By construction, 
	\begin{equation*}
		H[\{t:v\in B_t\}]=H[\{t: N[v]\cap W_t \neq \emptyset\}]=H[\bigcup_{u \in N[v]}\{t :u \in W_t\}]=\bigcup_{u \in N[v]}H^{(u)}.
	\end{equation*}
	 Now for all $u \in N_G(v)$, there exists $x \in V(H)$ such that $\{u,v\}\in W_x$ and as such, $V(H^{(v)})\cap V(H^{(u)})\neq \emptyset$. Hence $H[\{t:v\in B_t\}]$ is a connected subgraph of $H$ for all $v \in V(G^2)$. We conclude that $(H,\B)$ is indeed an $H $-decomposition of $G^2$. Note that the width of $(H,\B)$ is at most $(k+1)(\Delta+1)-1$ since for every vertex in a bag of $(H,\W)$, we add at most $\Delta$ other vertices to that bag in order to obtain $(H,\B)$.
\end{proof}

The next lemma is the heart of our characterisation of when a direct product has bounded treewidth or when it has bounded pathwidth. For the sake of simplicity, we shall consider a path to be a subdivision of $K_1$.

\begin{lem}\label{SufficientCondition}
	For all connected graphs $G_1$, $G_2$ and $H$ where $\tau(G_1)=t$, and $G_2$ has maximum degree $\Delta$ and has an $H$-decomposition with width at most $k$, there exists an $H'$-decomposition of $G_1 \times G_2$ with width at most $t(k+1)(\Delta+1)$ where $H'$ is a subdivision of $H$.
\end{lem}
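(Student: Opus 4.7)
The plan is to lift the $H$-decomposition of $G_2^2$ produced by \Cref{HDecompGSquared} (of width at most $(k+1)(\Delta+1)-1$) to an $H'$-decomposition of $G_1\times G_2$, using edge subdivisions of $H$ to accommodate the independent part of $G_1$. Let $A$ be a vertex cover of $G_1$ of size $t$ and let $I:=V(G_1)\setminus A$; by construction $I$ is an independent set in $G_1$. Write $(H,\{B_t:t\in V(H)\})$ for the $H$-decomposition of $G_2^2$; from the explicit construction in \Cref{HDecompGSquared} we inherit the crucial property that for each $v\in V(G_2)$ there is some $t_v\in V(H)$ with $N_{G_2}[v]\subseteq B_{t_v}$ (namely, any $t_v$ whose original $G_2$-bag contains $v$).

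For each original node $t\in V(H)$, set the product bag $\widetilde B_t := A\times B_t$, of size at most $t(k+1)(\Delta+1)$. These bags already cover every vertex $(a,v)$ with $a\in A$ and every edge $(a,v)(b,u)\in E(G_1\times G_2)$ with $a,b\in A$: the condition $uv\in E(G_2)\subseteq E(G_2^2)$ forces $u,v$ into some common $B_t$, so $(a,v),(b,u)\in A\times B_t$.

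To place the remaining vertices $(a,v)$ with $a\in I$, I would subdivide edges of $H$. Assuming $|V(H)|\geq 2$ (the case $|V(H)|=1$ is handled by the paper's convention that a path is a subdivision of $K_1$, via an analogous chain of slots), pick for each $v\in V(G_2)$ an edge $e_v=t_vt'_v\in E(H)$ incident to $t_v$. For every edge $e\in E(H)$ with endpoints $s,s'$, subdivide $e$ into a chain long enough to provide one dedicated slot for each pair $(a,v)\in I\times V(G_2)$ with $e_v=e$, grouping slots for pairs with $t_v=s$ on the $s$-side of the chain and slots for pairs with $t_v=s'$ on the $s'$-side. Let $H'$ denote the resulting subdivision of $H$. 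At an $s$-side slot $p$ carrying pair $(a,v)$, set $\widetilde B_p := \widetilde B_s\cup\{(a,v)\}$, of size at most $t(k+1)(\Delta+1)+1$, which gives the required width bound.

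The verification then splits into three parts. For edge coverage, the only non-immediate case is $(a,v)(b,u)\in E(G_1\times G_2)$ with $a\in I,b\in A$: the slot $p$ carrying $(a,v)$ lies on the $t_v$-side of $e_v$, and $(b,u)\in\widetilde B_{t_v}\subseteq\widetilde B_p$ because $u\in N_{G_2}(v)\subseteq B_{t_v}$, while edges with both endpoints in $I\times V(G_2)$ do not exist since $I$ is independent in $G_1$. For subtree connectivity, the single-slot subtrees of the $I$-indexed vertices are trivially connected; the main obstacle is verifying connectivity for $(c,u)\in A\times V(G_2)$, whose subtree in $H'$ is the subdivision of the connected subgraph $H[\{t:u\in B_t\}]$ together with an $s$-side tail appended along each edge $e=ss'$ with $u\in B_s$ but $u\notin B_{s'}$. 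This works because the subdivision of $H[\{t:u\in B_t\}]$ is already connected in $H'$ and every added tail is a path attached at an original node $s$ of this scaffold, so the whole subtree remains connected.
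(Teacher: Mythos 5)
Your proof is correct and follows essentially the same route as the paper's: cover $G_1$ by a vertex cover $A$, pass to $G_2^2$ via \Cref{HDecompGSquared}, take product bags $A\times B_t$ (which is exactly what \Cref{HDecompStrongProduct} applied to the clique on $A$ yields), and then subdivide edges of $H$ to insert one vertex of $(V(G_1)\setminus A)\times V(G_2)$ per new node, each into a copy of the bag at $t_v$. The only (harmless, arguably cleaner) difference is that you justify $N_{G_1\times G_2}((a,v))\subseteq A\times B_{t_v}$ directly from the explicit bag structure of \Cref{HDecompGSquared} rather than via the clique-in-a-bag argument, and you organise the subdivisions as one batch chain per edge instead of iteratively.
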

\begin{proof}
	Let $A\sse V(G_1)$ be a vertex cover of $G_1$ where $|A|= t$. We may assume that $A$ is a clique in $G_1$. Let $\tilde{G}_1=G[A]$ and let $L:=V(G_1)-A$. By \Cref{HDecompGSquared}, there exists an $H$-decomposition of $G_2^2$ with width at most $(k+1)(\Delta+1)-1$. By \Cref{HDecompStrongProduct}, there exists an $H$-decomposition $(H,\W)$ of $\tilde{G}_1 \boxtimes G_2^2$ that has width at most $t(k+1)(\Delta+1)-1$.
	
	If $L=\emptyset$, then $G_1 \boxtimes G_2 \sse \tilde{G}_1 \times G_2^2$ and we are done. Otherwise, let $(\ell,v) \in L \times V(G_2)$. Now $N_{G_1}(\ell)$ is a clique in $K_t$ and $N_{G_2}(v)$ is a clique in $G_2^2$. Thus, $N_{G_1\times G_2}((\ell,v))$ is a clique in $\tilde{G}_1\boxtimes G_2^2$. As such, there exists a node $x \in V(H)$ such that $N_{G_1\times G_2}((\ell,v)) \sse W_{x}$. We now explain how to modify $(H,\W)$ to obtain an $H'$-decomposition $(H',\B)$ of $G_1 \times G_2$ where $H'$ is a subdivision of $H$.
	
	Let $(\ell_1,v_1),(\ell_2,v_2),\dots,(\ell_j,v_j)$ be an arbitrary ordering of the vertices in $L \times V(G_2)$ where $j=|L \times V(G_2)|$. For each $i \in [j]$, let $x_i \in V(H)$ be a vertex where $N_{G_1\times G_2}((\ell,v)) \sse W_{x_i}$. Initialise $i:=1$, $H_i:=H$, $V_i:=V(\tilde{G}_1 \boxtimes G_2^2)$ and $\W_i:=\W$. For $i=1,2,\dots,j+1$, apply the following procedure: let $y_i$ be a neighbour of $x_i$ in $H_i$. Subdivide the edge $x_iy_i \in E(H_i)$ and let $H_{i+1}$ be the graph obtained and let $z_i$ be the new vertex from the subdivided edge (see \Cref{fig:SufficientTwSubdivision}). Note that if no such $x_iy_i$ edge exists then $H_i=K_1$. In which case, let $H_{i+1}$ be obtained from $H_i$ by adding the vertex $z_i$ and edge $x_iz_i$. Let $W_{z_i}=W_{x_i} \cup \{(\ell_i,v_i)\}$ and $\W_{i+1}=\W_i\cup \{W_{z_i}\}$ and $V_{i+1}=V_i \cup \{(\ell_i,v_i)\}$. 

	 \begin{figure}[!htb]
		\begin{center}
			\includegraphics[width=0.37\linewidth]{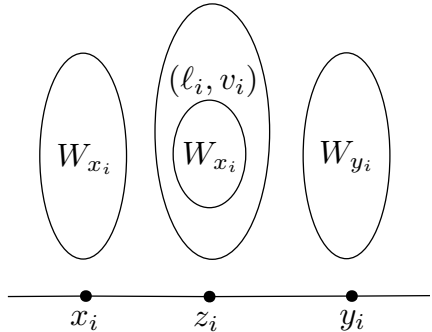}
			\caption{Subdividing the $x_iy_i$ edge in $H_i$ to obtain $H_{i+1}$.}
			\label{fig:SufficientTwSubdivision}
		\end{center}
	\end{figure}
	
	Once the above procedure has completed, let $H':=H_{j+1}$ and for all $x \in V(H')$, let $B_x:=W_x$ and $\B:=\{B_x:x \in V(H')\}$. Note that $H'$ is a subdivision of $H$ since $H_{i+1}$ is obtained by subdividing an edge of $H_i$ for all $i \in [j]$.
	
	We now demonstrate that $(H',\B)$ is an $H'$-decomposition of $G_1 \times G_2$. Let $(u_1,u_2)\in V(G_1 \times G_2)$. If $u_1 \in V(G_1)-L$, then $(u_1,u_2)\in V(\tilde{G}_1 \boxtimes G_2^2)$ and hence there is a $x \in V(H)$ such that $(u_1,u_2)\in W_x$. By the construction of $(H',\B)$, it follows that $(u_1,u_2)\in B_x$. Otherwise, $u_i \in L$ in which case $(u_1,u_2)=(l_i,v_i)$ for some $i \in [j]$ and $(u_1,u_2) \in W_{z_i}$. Hence every vertex is in a bag. 
	
	Let $(u_1,u_2)(u_3,v_4)\in E(G_1 \times G_2)$. Since $L$ is an independent set, at most one of $u_1$ and $u_3$ is in $L$. Suppose that $u_1 \in L$. Then there exists $i \in [j]$ such that $(u_1,u_2)=(\ell_i,v_i)$. Since $(u_3,u_4) \in N_{G_1\times G_2}[(\ell_i,v_i)]$, by the construction of $B_{z_i}$ it follows that $(u_1,u_2),(u_3,v_4)\in B_{z_i}$. Now if neither $u_1$ nor $u_3$ is in $L$, then there is a node $x \in V(H)$ such that $(u_1,u_2),(u_3,u_4)\in W_x$ and hence $(u_1,u_2),(u_3,u_4) \in B_x$. Hence, the endpoints of each edge is in a bag. 
	
	It remains to show that for all $(u_1,u_2) \in V(G_1 \times G_2)$, the subgraph $H'[\{x\in V(H'):(u_1,u_2) \in B_x\}]$ is connected. We prove the following claim by induction on $i$: 
	
	\textbf{Claim:} For every vertex $(u_1,u_2)\in V_i$, the induced subgraph $H_i[\{x\in V(H'):(u_1,u_2)\in W_x\}]$ is connected. 
	
	For $i=1$, this claim holds since $(H_1,\W_1)$ is an $H$-decomposition of $\tilde{G}_1 \boxtimes G_2^2$. Now assume that it holds up to $i-1$ where $i\geq 2$. The graph $H_{i}$ is obtained from $H_{i-1}$ by replacing the edge $x_iy_i$ by the path $x_i,z_i,y_i$. Furthermore, $V_i\bs V_{i-1}=\{(\ell_{i},v_{i})\}$. Now the vertex $(\ell_i,v_i)$ is only in the bag $W_{z_i}$ and hence $H_i[x:(\ell_{i},v_{i})\in W_x]$ is connected. Now if $(u_1,u_2)\in V_{i-1}$ then by induction $H_{i-1}[x:(u_1,u_2)\in W_x]$ is connected. For the sake of contradiction, suppose that $H_{i}[x:(u_1,u_2)\in W_x]$ is disconnected. The only way for this to occur is if $(u_1,u_2)\in W_{x_i}$ and $(u_1,u_2) \not \in W_{z_i}$. However, by the construction of $W_{z_i}$, we have $W_{x_i} \sse W_{z_i}$, a contradiction. Hence $H_{i}[\{t:(u_1,u_2)\in W_t\}]$ is connected for all $(u_1,u_2)\in V_i$.
	
	This completes our proof that $(H',\B)$ is an $H'$-decomposition of $G_1 \times G_2$ where $H'$ is a subdivision of $H$. We conclude by noting that the width of this $H'$-decomposition is at most $1$ more than the width of $(H,\W)$ as required. 
\end{proof}

When the graph $H$ in \Cref{SufficientCondition} realises the treewidth or pathwidth of $G_2$, we have the following.

\begin{cor}\label{twSufficientCondition}
	For all connected graphs $G_1$ and $G_2$,
	\begin{align*}
		\tw(G_1 \times G_2)&\leq \tau(G_1)(\tw(G_2)+1)(\Delta(G_2)+1), \text{ and}\\
		\pw(G_1 \times G_2)&\leq \tau(G_1)(\pw(G_2)+1)(\Delta(G_2)+1).
	\end{align*}
\end{cor}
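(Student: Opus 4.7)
The plan is to obtain both bounds as direct applications of \Cref{SufficientCondition}, using the simple observation that any subdivision of a tree is a tree and any subdivision of a path is a path.

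First, for the treewidth bound, let $T$ be a tree and let $(T,\W)$ be a tree-decomposition of $G_2$ of width exactly $\tw(G_2)$. Applying \Cref{SufficientCondition} with $H := T$ and $k := \tw(G_2)$, we obtain an $H'$-decomposition of $G_1 \times G_2$ of width at most $\tau(G_1)(\tw(G_2)+1)(\Delta(G_2)+1)$, where $H'$ is a subdivision of $T$. Since subdividing edges preserves acyclicity, $H'$ is itself a tree, so this is a tree-decomposition of $G_1 \times G_2$. The claimed bound on $\tw(G_1 \times G_2)$ follows.

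For the pathwidth bound, repeat the argument but start from a path-decomposition $(P,\W)$ of $G_2$ of width $\pw(G_2)$ realised by a path $P$. Applying \Cref{SufficientCondition} with $H := P$ and $k := \pw(G_2)$ produces an $H'$-decomposition of $G_1 \times G_2$ of width at most $\tau(G_1)(\pw(G_2)+1)(\Delta(G_2)+1)$, where $H'$ is a subdivision of $P$. Any subdivision of a path is a path, so $H'$ is a path, and hence the decomposition is a path-decomposition. This gives the stated bound on $\pw(G_1 \times G_2)$.

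There is no real obstacle here: all of the technical work (building the decomposition, bounding bag sizes, and verifying the connectivity condition) has already been carried out inside the proof of \Cref{SufficientCondition}. The only observation needed at this stage is the closure of the classes ``tree'' and ``path'' under edge subdivision, which ensures that an $H'$-decomposition is respectively a tree-decomposition or a path-decomposition whenever $H$ is a tree or a path.
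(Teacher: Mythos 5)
Your proposal is correct and is essentially the paper's own argument: the paper derives this corollary directly from \Cref{SufficientCondition} by taking $H$ to be a tree (respectively, a path) realising $\tw(G_2)$ (respectively, $\pw(G_2)$), and the only additional observation needed is precisely the one you make, that subdivisions of trees are trees and subdivisions of paths are paths, so the resulting $H'$-decomposition is a tree- or path-decomposition of the stated width.
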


The next two lemmas are key cases for when a direct product has unbounded treewidth. The first lemma follows by a result in our companion paper \cite{HW2021hadwiger} and the well-known fact that $\tw(K_{n,n})=n$.

\begin{lem}[\cite{HW2021hadwiger}]\label{ClawPathHadwiger}
	For every integer $k\geq 1$ and graph $G$ with $\dll(G)\geq k$, we have
	$K_{k,k}$ is a minor of $G \times P_{2k}$ and thus $\tw(G \times P_{2k})\geq k$.
\end{lem}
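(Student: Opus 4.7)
The plan is to construct $K_{k,k}$ explicitly as a minor of $G \times P_{2k}$ by lifting the $W^{(k)}$ minor of $G$, and then deduce $\tw(G \times P_{2k}) \geq k$ from the standard fact that $\tw(K_{k,k}) = k$ together with minor-monotonicity of treewidth.

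First, fix a model of $W^{(k)}$ in $G$ with connected disjoint branch sets $R = \mu(r)$, $U_i = \mu(u_i)$, $V_i = \mu(v_i)$ and distinguished edges $r_i u_i' \in E(G)$ and $u_i'' v_i' \in E(G)$, where $r_i \in R$, $u_i', u_i'' \in U_i$, $v_i' \in V_i$. The goal is to exhibit $2k$ pairwise disjoint connected subgraphs $A_1, \dots, A_k, B_1, \dots, B_k$ of $G \times P_{2k}$ with $A_i$ adjacent to $B_j$ for every pair $(i, j)$. The idea is to place each $B_j$ in the two-layer strip at positions $\{2j-1, 2j\}$ of $P_{2k}$, and to arrange each $A_i$ as a zig-zag through $U_i \cup V_i$ spanning many path positions, with all cross-adjacencies provided by the edges $r_i u_i'$.

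In the core case where $R = \{r\}$, $U_i = \{u_i\}$, $V_i = \{v_i\}$ are single vertices, take
\[ A_i := \{(u_i, 2), (v_i, 3), (u_i, 4), (v_i, 5), \dots, (v_i, 2k-1), (u_i, 2k)\}, \qquad B_j := \{(r, 2j-1)\}. \]
Each $A_i$ is a path in $G \times P_{2k}$ of length $2k-2$ whose consecutive vertices are joined via the edge $u_i v_i \in E(G)$, each $B_j$ is a single vertex, and the $2k$ branch sets are pairwise disjoint since $r$, the $u_i$'s, and the $v_i$'s are distinct. The adjacency $A_i \sim B_j$ is witnessed by the edge $(u_i, 2j)(r, 2j-1)$, which exists because $r u_i \in E(G)$ and the positions $2j, 2j-1$ differ by one. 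For a general model, the template extends: pick a Steiner tree $T_R$ in $R$ connecting $r_1, \dots, r_k$, which being bipartite splits its vertices into two classes; build $B_j$ from the component of $T_R \times \{2j-1, 2j\}$ that contains $(r_1, 2j-1)$, which is isomorphic to $T_R$ (hence connected) and automatically contains some $(r_i, p_i)$ with $p_i \in \{2j-1, 2j\}$ determined by the class of $r_i$ in $T_R$; then build $A_i$ as a zig-zag through $U_i \cup V_i$ that contains $(u_i', p_i \pm 1)$ for every $j$, using the path $Q_i$ in $U_i$ from $u_i'$ to $u_i''$ as a bridge when $u_i' \neq u_i''$. Disjointness of the branch sets and the cross-adjacencies $A_i \sim B_j$ are again guaranteed by the pairwise disjointness of $R, U_1, V_1, \dots, U_k, V_k$ in $G$ together with the edges $r_i u_i'$.

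The main obstacle is the generalisation from the single-vertex case to an arbitrary model: naively contracting $R$ to a single vertex does not commute with direct product with $P_{2k}$ (since edge contraction in the first coordinate does not lift cleanly), so one cannot simply reduce to the case $G = W^{(k)}$, and the walks in $R$ and in each $U_i$ must be handled explicitly while preserving connectivity of the branch sets in $G \times P_{2k}$. Once the $K_{k,k}$ minor is established, the treewidth bound is immediate: $\tw(K_{k,k}) = k$ (the upper bound is witnessed by a path decomposition whose every bag consists of one side of the bipartition together with one vertex of the other side, and the matching lower bound is standard), and treewidth is minor-monotone, so $\tw(G \times P_{2k}) \geq \tw(K_{k,k}) = k$.
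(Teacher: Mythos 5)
Your construction is correct, but note that this paper does not actually prove \cref{ClawPathHadwiger}: it is imported from the companion paper \cite{HW2021hadwiger}, so there is no in-document proof to compare against. On its own merits, your argument is sound. The core case is clean: the branch sets $B_j=\{(r,2j-1)\}$ and the zig-zag paths $A_i$ hitting $(u_i,2j)$ for every $j$ give all $k^2$ cross-edges via $ru_i\in E(G)$, and the branch sets are disjoint and connected. You also correctly identify the real subtlety, namely that minors in the first factor do not lift through the direct product, so the general model must be handled by hand; your fix (two-colour a Steiner tree $T_R$ spanning $r_1,\dots,r_k$, take in each strip $\{2j-1,2j\}$ the component of $T_R\times K_2$ containing $(r_1,2j-1)$, and observe that the position $p_i^{(j)}\in\{2j-1,2j\}$ of $r_i$ has the same parity offset $\epsilon_i$ for every $j$) is exactly what is needed. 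One small imprecision: routing $A_i$ through the whole path $Q_i$ from $u_i'$ to $u_i''$ and then to $v_i'$ can desynchronise the parity of the positions at which $u_i'$ appears (e.g.\ if $Q_i$ has length $2$, the fibre of $u_i'$ is only visited every $2|Q_i|+2$ positions, not every $2$). The repair is immediate and makes $V_i$ nearly redundant: $u_i'$ always has some neighbour $w_i$ in $(U_i\cup V_i)\setminus\{u_i'\}$ (a neighbour in $U_i$ if $|U_i|\ge 2$, else $v_i'$), and the simple bounce $(u_i',2+\epsilon_i),(w_i,3+\epsilon_i),(u_i',4+\epsilon_i),\dots$ places $u_i'$ at every position of the required parity while staying inside $(U_i\cup V_i)\times V(P_{2k})$. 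With that adjustment the disjointness, connectivity and adjacency checks all go through, and the conclusion $\tw(G\times P_{2k})\ge\tw(K_{k,k})=k$ follows from minor-monotonicity as you say.
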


The next lemma is folklore and uses the well-known fact that $\tw(P_n \CartProd P_n)=n$ \cite{bodlaender1998partial}.
\begin{lem}\label{twFamilyGraphsUnbounded}
	For integers $b,n \geq 1$, let $S_b$ be the star with $b$ leaves and $P_n$ be the path on $n$ vertices. Then $\tw(S_b \times S_b)\geq b$ and $\tw(P_{2n-1} \times P_{2n-1})\geq n$.
\end{lem}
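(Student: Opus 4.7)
The plan is to establish each inequality by exhibiting a subgraph whose treewidth is already known, and then invoking monotonicity of treewidth under taking subgraphs.

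For $\tw(S_b \times S_b) \geq b$, I would write $S_b$ with center $c$ and leaves $\ell_1, \dots, \ell_b$, and consider the two sets $A = \{(c, \ell_i) : i \in [b]\}$ and $B = \{(\ell_j, c) : j \in [b]\}$ in $V(S_b \times S_b)$. For any $i,j \in [b]$, both $c\ell_i$ and $c\ell_j$ lie in $E(S_b)$, so by definition of the direct product $(c, \ell_i)(\ell_j, c) \in E(S_b \times S_b)$. Hence $A \cup B$ induces a copy of $K_{b,b}$, and since $\tw(K_{b,b}) = b$ (a well-known fact), the bound follows immediately.

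For $\tw(P_{2n-1} \times P_{2n-1}) \geq n$, the idea is to embed $P_n \CartProd P_n$ as a subgraph of $P_{2n-1} \times P_{2n-1}$ via a ``$45^\circ$ rotation'', mirroring the well-known isomorphism between the bipartite direct product of two paths and a diamond-shaped Cartesian grid. Identifying $V(P_{2n-1})$ with $\{0, 1, \dots, 2n-2\}$ so that adjacency in $P_{2n-1}$ means differing by $1$, I would define the map $\phi : [n]^2 \to V(P_{2n-1} \times P_{2n-1})$ by
\[ \phi(a, b) := (\,a + b - 2,\; a - b + n - 1\,). \]
The image lies in $\{0, \dots, 2n-2\}^2$ because $a + b - 2 \in [0, 2n-2]$ and $a - b + n - 1 \in [0, 2n-2]$; and $\phi$ is injective because the pair $(a+b, a-b)$ determines $(a, b)$. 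For an edge of $P_n \CartProd P_n$ of the form $(a, b)(a+1, b)$, both coordinates of $\phi$ increase by $1$; for an edge $(a, b)(a, b+1)$, one coordinate increases by $1$ while the other decreases by $1$. In either case the two $\phi$-images differ by exactly $1$ in each coordinate, which is precisely the adjacency condition in $P_{2n-1} \times P_{2n-1}$. Therefore $P_n \CartProd P_n \subseteq P_{2n-1} \times P_{2n-1}$, and the bound follows from the well-known fact $\tw(P_n \CartProd P_n) = n$.

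No step looks genuinely hard. The only thing that requires a little care is guessing the correct affine form of $\phi$ for the second part: the naive embedding $(a, b) \mapsto (2a-2, 2b-2)$ fails because its image is an independent set in the direct product (both coordinate differences are even, hence at least $2$). The diagonal/rotation viewpoint, which reflects the fact that the even-parity component of the direct product of two paths is itself a Cartesian grid after a change of coordinates, resolves this obstruction.
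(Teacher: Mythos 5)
Your proposal is correct and follows essentially the same route as the paper: exhibiting $K_{b,b}$ inside $S_b \times S_b$ (the paper cites its complete multipartite subgraph characterisation for direct products, while you construct the bipartition explicitly, which amounts to the same thing), and embedding the Cartesian grid $P_n \CartProd P_n$ into $P_{2n-1} \times P_{2n-1}$ along diagonals (the paper lists the diagonal horizontal and vertical path families, while your affine map $\phi$ encodes the identical embedding in closed form). Both verifications check out, so nothing further is needed.
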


\begin{proof}
	 For the direct product of stars, $K_{b,b}\sse S_b \times S_b$ by \Cref{CompleteSubgraphsDirectProduct} and hence $\tw(S_b \times S_b)\geq b$. 
	\begin{figure}[!htb]
		\begin{center}
			\includegraphics[width=0.4\linewidth]{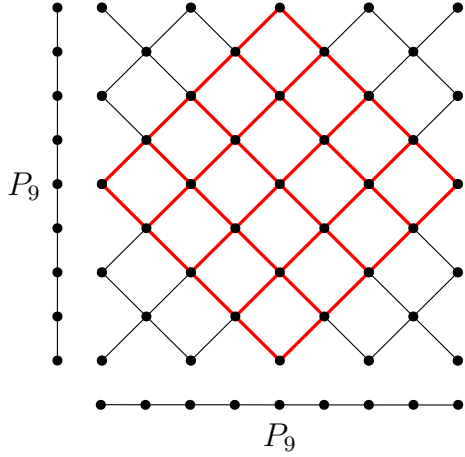}
			\caption{The $(5 \times 5)$-grid in $P_9 \times P_9$.}
			\label{fig:PtimesP}
		\end{center}
	\end{figure}

	Now consider the direct product of two paths. Since $\tw(P_n \CartProd P_n)\geq n$, it suffice to show that $P_n \CartProd P_n \sse P_{2n-1} \times P_{2n-1}$. Let $V(P_{2n-1})=[2n-1]$. For each $i \in [n]$, let $H^{(i)}$ be the subgraph of $P_{2n-1} \times P_{2n-1}$ induced by $\{(i-k,n+i-k):k\in [n]\}.$	For each $j \in [n]$, let $P^{(j)}$ be the subgraph of $P_{2n-1} \times P_{2n-1}$ induced by $\{(j+k,n-j+k):k\in [n]\}$.	Then $(\bigcup_{i \in [n]} H^{(i)})\cup (\bigcup_{j \in [n]}P^{(j)})$ defines a $P_n \CartProd P_n$ subgraph in $P_{2n-1} \times P_{2n-1}$ where for all $i,j \in [n]$, $H^{(i)}$ is the $i^{th}$-horizontal path in $P_n \CartProd P_n$ and $P^{(j)}$ is the $j^{th}$-vertical path in $P_n \CartProd P_n$, as required (see \Cref{fig:PtimesP}).
\end{proof}

The next lemma is the Moore bound; see \cite{miller2012moore} for a survey.

\begin{lem}\label{GeneralBoundedDegreeBoundedDiam}
For every connected graph $G$ with maximum degree $\Delta>1$ and diameter $d$, 	
	\begin{equation*}
		\textsf{v}(G)\leq 
		\begin{cases}
			1+\Delta \frac{(\Delta-1)^d-1}{\Delta-2}, &\text{if } \Delta>2,\\
			2d+1, & \text{if } \Delta=2.\\
		\end{cases}
	\end{equation*}
\end{lem}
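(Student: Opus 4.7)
The plan is to prove the Moore bound in the standard way, namely by performing a breadth-first search from an arbitrary vertex and bounding the number of vertices at each BFS level. Fix any vertex $v \in V(G)$, and for each integer $i \geq 0$ let $N_i := |\{u \in V(G) : \dist_G(v,u) = i\}|$. Since $G$ is connected with diameter $d$, every vertex lies at distance at most $d$ from $v$, so $\textsf{v}(G) = \sum_{i=0}^{d} N_i$.

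The key inequalities are $N_0 = 1$, $N_1 \leq \Delta$, and $N_{i+1} \leq (\Delta - 1) N_i$ for all $i \geq 1$. The first two are immediate. For the third, observe that every vertex $u$ at distance $i+1$ from $v$ has at least one neighbour at distance $i$ from $v$ (take the second-to-last vertex on any shortest $v$--$u$ path); conversely, each vertex $w$ at distance $i \geq 1$ has at least one neighbour at distance $i-1$, leaving at most $\Delta - 1$ of its neighbours available to lie at distance $i+1$. Counting pairs $(w,u)$ where $w$ is at distance $i$, $u$ is at distance $i+1$, and $wu \in E(G)$, we therefore obtain $N_{i+1} \leq (\Delta-1) N_i$.

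Iterating gives $N_i \leq \Delta (\Delta-1)^{i-1}$ for all $i \geq 1$. When $\Delta > 2$, summing the geometric series yields
\begin{equation*}
\textsf{v}(G) \leq 1 + \sum_{i=1}^{d} \Delta (\Delta - 1)^{i-1} = 1 + \Delta \cdot \frac{(\Delta-1)^d - 1}{\Delta - 2}.
\end{equation*}
When $\Delta = 2$, the recursion gives $N_i \leq 2$ for every $i \geq 1$, whence $\textsf{v}(G) \leq 1 + 2d$.

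This proof is entirely routine; there is no real obstacle. The only mild subtlety is the asymmetric base case ($N_1 \leq \Delta$ whereas $N_{i+1} \leq (\Delta-1) N_i$ for $i \geq 1$), which is what produces the factor of $\Delta$ outside the geometric sum, and the degenerate case $\Delta = 2$ where the closed-form denominator $\Delta - 2$ vanishes and one must handle the sum separately.
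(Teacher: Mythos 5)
Your proof is correct: it is the standard breadth-first-search level-counting argument for the Moore bound, with the right asymmetric base case ($N_1\leq\Delta$ versus $N_{i+1}\leq(\Delta-1)N_i$) and correct handling of the degenerate case $\Delta=2$. The paper does not prove this lemma at all---it cites it as the well-known Moore bound---so your argument simply supplies the standard proof the paper implicitly relies on.
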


For a graph $G$, let $\tilde{\tau}(G)$ be the maximum vertex cover number of a component of $G$. We now prove our characterisation for when a direct product has bounded treewidth.
\begin{thm}\label{twDirectProductGeneralGraphs}
	Let $\G_1$ and $\G_2$ be monotone graph classes that contains $K_2$. Then $\tw(\G_1\times \G_2)$ is bounded if and only if $\tw(\G_1)$ and $\tw(\G_2)$ are bounded and at least one of the following conditions hold:
	\begin{compactitem}
		\item $\tilde{\textsf{v}}(G)$ or $\tilde{\textsf{v}}(G)$ is bounded;
		\item $\tilde{\tau}(G)$ and $\Delta({\G}_2)$ are bounded; or
		\item $\tilde{\tau}(G)$ and $\Delta({\G}_1)$ are bounded.
	\end{compactitem}
\end{thm}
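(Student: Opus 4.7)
The plan is to prove each direction of the biconditional separately.

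For \emph{sufficiency}, I would use each of the three conditions as follows. If condition~1 holds, say $\tilde{\textsf{v}}(\G_2)$ is bounded, then since $G_1 \times G_2 \subseteq G_1 \boxtimes G_2$ and the direct product splits as a disjoint union over connected components, applying Lemma~\ref{twStrongProduct} component-wise gives a uniform bound on $\tw(\G_1 \times \G_2)$. If condition~2 or~3 holds, applying Corollary~\ref{twSufficientCondition} component-wise directly delivers the bound.

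For \emph{necessity}, I would argue the contrapositive: if $\tw(\G_i)$ is unbounded for some $i$, or if all three conditions fail, then $\tw(\G_1 \times \G_2)$ is unbounded. The first horn is immediate from Corollary~\ref{twUnboundedKtwo}, using $K_2 \in \G_{3-i}$ to embed $\G_i \times K_2$ into $\G_1 \times \G_2$. The main work is the second horn. The key auxiliary observation I would extract first is: \emph{for a class $\G$ with bounded treewidth and unbounded $\tilde{\textsf{v}}$, if $\pathnumber(\G)$ is bounded then $\Delta(\G)$ must be unbounded}; this follows from the Moore bound (Lemma~\ref{GeneralBoundedDegreeBoundedDiam}) together with $\operatorname{diam} \leq \pathnumber - 1$.

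Assuming $\tw(\G_i)$ bounded for each $i$ and all three conditions failing, so in particular $\tilde{\textsf{v}}(\G_1)$ and $\tilde{\textsf{v}}(\G_2)$ are unbounded, I would case-split on whether $\pathnumber(\G_i)$ is bounded:
\begin{compactitem}
\item Both pathnumbers unbounded: monotonicity places $P_{2n-1}$ in both classes and Lemma~\ref{twFamilyGraphsUnbounded} gives $\tw(P_{2n-1}\times P_{2n-1})\geq n$.
\item Both pathnumbers bounded: the key observation forces $\Delta(\G_1)$ and $\Delta(\G_2)$ both unbounded, so $S_b$ lies in each class and Lemma~\ref{twFamilyGraphsUnbounded} gives $\tw(S_b\times S_b)\geq b$.
\item Exactly one bounded (WLOG $\pathnumber(\G_1)$): the key observation yields $\Delta(\G_1)$ unbounded; if $\Delta(\G_2)$ is also unbounded the star argument above finishes, otherwise $\Delta(\G_2)$ is bounded, so failure of condition~2 forces $\tilde{\tau}(\G_1)$ unbounded, and since $\pathnumber(\G_1)$ is bounded, Lemma~\ref{TreewidthConditions} forces $\dll(\G_1)$ unbounded; then Lemma~\ref{ClawPathHadwiger} applied to such $G\in\G_1$ with $\dll(G)\geq k$ together with $P_{2k}\in\G_2$ yields a $K_{k,k}$-minor of $G\times P_{2k}$ and hence treewidth at least $k$.
\end{compactitem}

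The main obstacle is orchestrating this case analysis so that the right failure mode of conditions~2 or~3 is invoked in each subcase. The crucial trick is that bounded pathnumber on one side combined with unbounded $\tilde{\textsf{v}}$ automatically supplies unbounded max degree on that side via the Moore bound, dispatching most subcases through the star lemma; only the asymmetric subcase (bounded pathnumber on one side, bounded max degree on the other) requires the $\dll$-path bridge provided by Lemma~\ref{ClawPathHadwiger}, and that is precisely the subcase in which the condition-failure hypotheses force both $\dll(\G_1)$ unbounded and $\pathnumber(\G_2)$ unbounded.
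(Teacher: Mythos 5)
Your proposal is correct and uses essentially the same ingredients and case analysis as the paper's proof: \Cref{twUnboundedKtwo} for the unbounded-treewidth horn, \Cref{twStrongProduct} and \Cref{twSufficientCondition} for sufficiency, and the stars/paths of \Cref{twFamilyGraphsUnbounded}, the Moore bound, \Cref{TreewidthConditions} and \Cref{ClawPathHadwiger} for the remaining cases. The only difference is organizational — you separate the two directions cleanly and case-split on the path numbers, whereas the paper runs a single chain that first fixes which class has bounded maximum degree and then derives unbounded path number on that side via the Moore bound — but both arguments reduce to the same critical subcase and resolve it identically.
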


\begin{proof}
	Assume that $\tw(\G_1 \times \G_2)$ is bounded. By \Cref{twUnboundedKtwo}, if $\G_1$ or $\G_2$ has unbounded treewidth then $\G_1 \times \G_2$ has unbounded treewidth. Hence, we may assume that there exists an integer $k \geq 1$ such that $\tw(\G_1)\leq k$ and $\tw(\G_2) \leq k$. Now suppose there exists an integer $c_1 \geq 1$ such that $\tilde{\textsf{v}}({\G}_1) \leq c_1$ or $\tilde{\textsf{v}}({\G}_2) \leq c_1$. By \Cref{twStrongProduct}, $\tw(\G_1 \times \G_2)\leq (k+1)c_1-1$ and we are done. 
	
	It remains to consider the case when neither $\tilde{\textsf{v}}({\G}_1)$ nor $\tilde{\textsf{v}}({\G}_2)$ is bounded. Suppose that $\G_1$ and $\G_2$ both have unbounded maximum degree. Then for every integer $b\geq 1$, the star $S_b$ is a member of both $\G_1$ and $\G_2$ and $(S_b \times S_b)_{b \in \N}\sse \G_1 \times \G_2$. However, by \Cref{twFamilyGraphsUnbounded}, this is a contradiction since $(S_b \times S_b)_{b \in \N}$ has unbounded treewidth. Therefore either $\G_1$ or $\G_2$ has bounded maximum degree. 
	
	Without loss of generality, there exists an integer $c_2\geq 1$ such that $\Delta(\G_2)\leq c_2$. Since $\tilde{\textsf{v}}({\G}_2)$ is unbounded, by \Cref{GeneralBoundedDegreeBoundedDiam} it follows that $\pathnumber({\G}_2)$ is unbounded. Now if $\pathnumber({\G}_1)$ is also unbounded, then $(P_n \times P_n)_{n \in \N}$ is a family of graphs in ${\G}_1 \times {\G}_2$. This contradicts $\tw(\G_1 \times \G_2)$ being bounded since by \Cref{twFamilyGraphsUnbounded}, $(P_n \times P_n)_{n \in \N}$ has unbounded treewidth. Similarly, if $\dll(\G_1)$ is unbounded, then by \Cref{ClawPathHadwiger}, $\tw(\G_1 \times \G_2)$ is unbounded. So assume there exists integers $j,n \geq 1$ such that for every connected graph $G\in \G_1$, we have $\dll(G)\leq j$ and $\pathnumber(G)\leq n$. By \Cref{TreewidthConditions}, $\hat{\tau}({\G}_1)\leq \ceil{(j+1)n/2}$. By \Cref{twSufficientCondition}, $\tw(\G_1 \times \G_2)\leq \ceil{(j+1)n/2}(k+1)(\Delta+1)$ and thus, is bounded. As we have considered all possibilities, this completes our proof.
\end{proof}

The next theorem characterises when a direct product has bounded pathwidth. We omit the proof as it is identical to \Cref{twDirectProductGeneralGraphs} except we use \Cref{GtimesKTwoUnboundedPathwidth} instead of \Cref{twUnboundedKtwo}.

\begin{thm}\label{pwDirectProductGeneralGraphs}
	Let $\G_1$ and $\G_2$ be monotone graph classes that contains $K_2$. Then $\G_1 \times \G_2$ has bounded pathwidth if and only if $\G_1$ and $\G_2$ both have bounded pathwidth and at least one of the following holds:
	\begin{compactitem}
		\item $\textsf{v}(\hat{\G}_1)$ or $\textsf{v}(\hat{\G}_2)$ is bounded;
		\item $\tau(\hat{\G}_1)$ and $\Delta({\G}_2)$ are bounded; or
		\item $\tau(\hat{\G}_2)$ and $\Delta({\G}_1)$ are bounded.
	\end{compactitem}
\end{thm}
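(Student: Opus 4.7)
The plan is to mirror the proof of \Cref{twDirectProductGeneralGraphs} with pathwidth replacing treewidth throughout, using \Cref{GtimesKTwoUnboundedPathwidth} in place of \Cref{twUnboundedKtwo}. Every auxiliary ingredient in the treewidth argument has a pathwidth counterpart already available in the paper.

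For the forward direction, assume $\pw(\G_1 \times \G_2)$ is bounded. Since $K_2$ lies in both monotone classes, $\G_1 \times K_2$ and $K_2 \times \G_2$ are sub-classes of $\G_1 \times \G_2$, so \Cref{GtimesKTwoUnboundedPathwidth} immediately forces $\pw(\G_1)$ and $\pw(\G_2)$ to be bounded. If $\tilde{\textsf{v}}(\G_1)$ or $\tilde{\textsf{v}}(\G_2)$ is bounded we are done, so assume both are unbounded. If $\Delta(\G_1)$ and $\Delta(\G_2)$ are both unbounded, then $S_b$ lies in both classes for every $b$, and $K_{b,b} \sse S_b \times S_b$ by \Cref{CompleteSubgraphsDirectProduct} yields $\pw(\G_1 \times \G_2) \geq \pw(K_{b,b}) \geq b$, a contradiction. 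Without loss of generality, $\Delta(\G_2)$ is bounded; since $\tilde{\textsf{v}}(\G_2)$ is unbounded the Moore bound (\Cref{GeneralBoundedDegreeBoundedDiam}) makes $\pathnumber(\G_2)$ unbounded. If $\pathnumber(\G_1)$ were also unbounded, \Cref{twFamilyGraphsUnbounded} gives $P_n \CartProd P_n \sse P_{2n-1} \times P_{2n-1}$ in $\G_1 \times \G_2$, so $\pw \geq \tw(P_n \CartProd P_n) = n$ is unbounded. Similarly, if $\dll(\G_1)$ were unbounded, \Cref{ClawPathHadwiger} produces $K_{k,k}$ as a minor of a graph in $\G_1 \times \G_2$ for every $k$, again forcing unbounded pathwidth. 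Thus $\pathnumber(\G_1)$ and $\dll(\G_1)$ are both bounded, so \Cref{TreewidthConditions} gives a bound on $\tilde{\tau}(\G_1)$.

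For the backward direction, if $\tilde{\textsf{v}}(\G_1)$ or $\tilde{\textsf{v}}(\G_2)$ is bounded, then the pathwidth inequality in \Cref{twStrongProduct}, together with $G_1 \times G_2 \sse G_1 \boxtimes G_2$, gives a bound on $\pw(\G_1 \times \G_2)$ after a componentwise application. In the remaining cases $\tilde{\tau}(\G_1)$ and $\Delta(\G_2)$ are bounded (the other case is symmetric), and \Cref{twSufficientCondition} applied componentwise bounds $\pw(\G_1 \times \G_2)$ by $\tilde{\tau}(\G_1)(\pw(\G_2)+1)(\Delta(\G_2)+1)$. The pathwidth clause of \Cref{twSufficientCondition} is legitimate here because \Cref{SufficientCondition} outputs an $H'$-decomposition where $H'$ is a subdivision of $H$; taking $H$ to be a path realising $\pw(G_2)$, the graph $H'$ is also a path.

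The only real obstacle is bookkeeping: checking that each step of the treewidth proof genuinely survives the substitution. All the needed substitutes exist, namely \Cref{GtimesKTwoUnboundedPathwidth} for the tied-parameter step, the pathwidth clauses of \Cref{twStrongProduct} and \Cref{twSufficientCondition} for the upper bounds, and the observation that subdividing a path gives a path. The remaining steps use only subgraph and minor containments, which are indifferent to the width parameter chosen.
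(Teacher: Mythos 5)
Your proposal is correct and matches the paper exactly: the authors omit the proof precisely because it is the treewidth argument with \cref{GtimesKTwoUnboundedPathwidth} substituted for \cref{twUnboundedKtwo} and the pathwidth clauses of \cref{twStrongProduct} and \cref{twSufficientCondition} used for the upper bounds. Your additional check that a subdivision of a path is a path (so \cref{SufficientCondition} preserves path-decompositions) is a worthwhile detail the paper leaves implicit.
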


\fontsize{10}{11} 
\selectfont 
	
\let\oldthebibliography=\thebibliography
\let\endoldthebibliography=\endthebibliography
\renewenvironment{thebibliography}[1]{%
	\begin{oldthebibliography}{#1}%
		\setlength{\parskip}{0.3ex}%
		\setlength{\itemsep}{0.3ex}%
}{\end{oldthebibliography}}
	
\bibliographystyle{DavidNatbibStyle}
\bibliography{RobReferences}

\end{document}